\newcommand{\ac}{\textup{!`}}
\newtheorem{theorem}{Theorem}[section]
\newtheorem{lemma}[theorem]{Lemma}
\newtheorem{proposition}[theorem]{Proposition}
\theoremstyle{definition}
\newtheorem{definition}[theorem]{Definition}
\newtheorem{definition-lemma}[theorem]{Definition-Lemma}
\newtheorem{definition-theorem}[theorem]{Definition-Theorem}
\newtheorem{remark}[theorem]{Remark}
\newtheorem*{ack}{Acknowledgements}
\newtheorem*{notation}{Notations}
\title[Hochschild cohomology of AS-regular algebras]{Koszul duality and the
Hochschild cohomology of Artin-Schelter regular algebras}
\email{liuleilei199009@126.com}
\author{Leilei Liu}
\address{School of Mathematics, Sichuan University, Chengdu, Sichuan Province 610064,
People's Republic of China}
\keywords{Artin-Schelter algebras, Koszul duality, cohomology, Batalin-Vilkovisky algebras}
\begin{document}

\begin{abstract}
We identify two Batalin-Vilkovisky algebra structures, one obtained by Kowalzig and Krahmer
on the Hochschild cohomology of an Artin-Schelter regular algebra with semisimple Nakayama 
automorphism and the other obtained by
Lambre, Zhou and Zimmermann
on the Hochschild cohomology of a Frobenius algebra
also with semisimple Nakayama 
automorphism, provided that these two algebras are Koszul
dual to each other.
\end{abstract}

\maketitle

\setcounter{tocdepth}{1}
\tableofcontents

\section{Introduction}

In 2014, Kowalzig and Krahmer showed in
\cite{KK2}
that the Hochschild cohomology
of an Artin-Schelter (AS for short) regular algebra with 
semisimple Nakayama automorphism 
has a Batalin-Vilkovisky algebra
structure.
Soon after that, Lambre, Zhou and Zimmerman proved
in \cite{LZZ} that the Hochschild cohomology of a Frobenius algebra
with semisimple Nakayama automorphism also admits a Batalin-Vilkovisky structure.
In this paper, we identify these two Batalin-Vilkovisky algebra structures, 
provided that these two algebras are Koszul dual to each other.
Let us start with some backgrounds.

In 2008 Brown and Zhang made a progress on the understanding
the twisted Poincar\'e duality of AS-regular algebras.
They proved in \cite{BZ} that for an AS-regular algebra $A$ of global dimension
$n$,
there is an isomorphism between the Hochschild cohomology of $A$
and the Hochschild homology of $A$ with coefficients in $A_\sigma$
\begin{equation}\label{IsotwistedPD}
\mathrm{HH}^\bullet(A)\cong\mathrm{HH}_{n-\bullet}(A; A_\sigma),
\end{equation}
where $\sigma$ is Nakayama automorphism of $A$.
Now if we assume $\sigma$ is {\it semisimple}, 
then Kowalzig and Krahmer proved in \cite{KK2} that
$\mathrm{HH}_\bullet(A; A_\sigma)$ can be computed
by a subcomplex of the corresponding Hochschild complex 
on which the Connes cyclic operator exists.
Therefore we may pull back the Connes cyclic operator to
$\mathrm{HH}^\bullet(A)$ via \eqref{IsotwistedPD}, 
which is usually denoted by $\Delta$.
Kowalzig and Krahmer showed that $\Delta$ in fact genetates
the Gerstenhaber bracket on $\mathrm{HH}^\bullet(A)$,
which means $(\mathrm{HH}^\bullet(A),\cup, \{-,-\}, \Delta)$ is a 
Batalin-Vilkovisky algebra.

Analogously, in 2016, Lambre, Zhou and Zimmermann 
proved in \cite{LZZ} that for a Frobenius algebra 
with semisimple Nakayama automorphism, say $A^!$, 
there also exists a Batalin-Vilkovisky algebra structure on $\mathrm{HH}^\bullet(A^!)$.

To relate these two Batalin-Vilkovisky algebra structures, let us recall a result of P. Smith.
In \cite[Proposition 5.10]{Smith} he showed that
for a graded connected Koszul algebra $A$, $A$ is AS-regular if and only if its Koszul dual 
$A^!$ is Frobenius.
Based on the fact that for a Koszul algebra $A$,
$$\mathrm{HH}^\bullet(A)\cong\mathrm{HH}^\bullet(A^!)$$
as Gerstenhaber algebras,
it is natural to ask that for an AS-regular algebra $A$ with semisimple Nakayama automorphism
whether this above isomorphism is an isomorphism of Batalin-Vilkovisky algebras. In this paper
we give an affirmative answer to this question:

\begin{theorem}\label{mainthm}
Suppose $A$ is a Koszul AS-regular algebra with semisimple Nakayama automorphism.
Denote by $A^!$ its Koszul dual algebra. Then
$$\mathrm{HH}^\bullet(A)\cong\mathrm{HH}^\bullet(A^!)$$
as Batalin-Vilkovisky algebras.
\end{theorem}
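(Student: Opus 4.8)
The plan is to reduce the statement to a comparison of the two Batalin--Vilkovisky operators and then to match them through an explicit chain-level model. Since the isomorphism $\Phi\colon\mathrm{HH}^\bullet(A)\xrightarrow{\sim}\mathrm{HH}^\bullet(A^!)$ is already known to respect the cup product and the Gerstenhaber bracket, and since a Batalin--Vilkovisky structure on a fixed Gerstenhaber algebra is precisely the datum of a square-zero degree $-1$ operator generating that bracket, it suffices to prove that $\Phi$ intertwines the two BV operators, i.e. $\Phi\circ\Delta_A=\Delta_{A^!}\circ\Phi$. I emphasize that this does \emph{not} follow formally from the bracket being preserved: two BV operators generating the same bracket differ by a derivation of the cup product, so the real content of the theorem lies in pinning down $\Delta$ itself.

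Next I would describe both operators as pullbacks of the Connes cyclic operator $B$ through a Poincar\'e-type duality. On the AS-regular side, $\Delta_A$ is obtained by transporting $B$ along the Brown--Zhang isomorphism \eqref{IsotwistedPD}, where semisimplicity of $\sigma$ guarantees that $B$ is defined on the subcomplex computing $\mathrm{HH}_\bullet(A;A_\sigma)$, as in \cite{KK2}. On the Frobenius side, $\Delta_{A^!}$ is obtained in \cite{LZZ} by transporting $B$ along the duality $\mathrm{HH}^\bullet(A^!)\cong\mathrm{HH}_\bullet(A^!)$ supplied by the Frobenius form, again using semisimplicity of the Nakayama automorphism of $A^!$. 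Thus both BV operators have the shape ``$(\text{duality})^{-1}\circ B\circ(\text{duality})$'', and the theorem will follow once the two dualities and the two copies of $B$ are matched.

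The technical core, and the step I expect to be the main obstacle, is to produce a homological counterpart of the Gerstenhaber isomorphism compatible with all of this structure. Concretely, from the Koszul bimodule resolution of $A$ and the corresponding resolution of $A^!$ I would build a quasi-isomorphism of mixed (cyclic) complexes inducing an isomorphism $\mathrm{HH}_\bullet(A;A_\sigma)\cong\mathrm{HH}_\bullet(A^!)$, compatible with the dimension shift $n-\bullet$ of \eqref{IsotwistedPD} and with Smith's characterization \cite[Proposition 5.10]{Smith}, and then verify that it intertwines the two Connes operators $B$. The subtle point is matching the twisted coefficients $A_\sigma$ on the $A$-side against the Frobenius structure on $A^!$: the Nakayama automorphism $\sigma$ and the Frobenius form must be shown to correspond under Koszul duality, so that the two ``volume'' classes used to pass between homology and cohomology are identified. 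Controlling the sign conventions and the degree shift in the \emph{cyclic} differentials, rather than merely in the Hochschild differentials, is where the computation is genuinely delicate.

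Finally, with the compatible homology isomorphism and the matching of the Connes operators in hand, I would assemble the identity $\Phi\circ\Delta_A=\Delta_{A^!}\circ\Phi$ out of the two commuting squares for the dualities together with the commuting square for $B$, and conclude that $\Phi$ is an isomorphism of Batalin--Vilkovisky algebras. Throughout, semisimplicity of the Nakayama automorphism is used on both sides to guarantee that $B$ descends to the appropriate invariant subcomplexes; without it neither BV operator would be defined, so this hypothesis enters the proof in an essential way rather than as a convenience.
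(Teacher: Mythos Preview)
Your proposal is correct and follows essentially the same strategy as the paper: both BV operators are expressed as pullbacks of a Connes-type $B$ along Poincar\'e dualities, a Koszul-duality isomorphism on the homology side is shown to intertwine the two $B$'s, and the commuting square of dualities then forces $\Phi\circ\Delta_A=\Delta_{A^!}\circ\Phi$. The only notable implementation difference is that the paper phrases the Frobenius-side homology as the coalgebra Hochschild homology $\mathrm{HH}_\bullet(A^{\ac};\,_{\sigma}\!A^{\ac})$ rather than $\mathrm{HH}_\bullet(A^!)$, and realizes the mixed-complex comparison by passing through the intermediate DG algebra $\Omega(A^{\ac})$: explicit maps $p_1,p_2,q_2$ and an explicit homotopy $h$ on $\mathrm{CH}^1_\bullet(\Omega(A^{\ac});\Omega(A^{\ac})_\sigma)$ are what make the ``mixed'' (i.e.\ $B$-compatible) part of the quasi-isomorphism go through, which is precisely the delicate step you flagged.
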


This paper may be viewed as a sequel to \cite{CYZ}, 
where the isomorphism of Batalin-Vilkovisky algebras on two Hochschild cohomology
groups are proved for Koszul Calabi-Yau algebras, verifying a conjecture of Rouquier
given in the preprint \cite{Ginzburg} of Ginzburg.

Note that Calabi-Yau algebras and AS-regular algebras are highly related:
in \cite{RRZ}, Reyes, Rogalski and Zhang introduced notion of 
twisted Calabi-Yau algebras (a Calabi-Yau algebra is twisted with trivial twisting), 
and proved that an algebra is twisted Calabi-Yau if and only if it is AS-regular (see also \cite{YeZ}
for some partial result).
Thus the isomorphism of Batalin-Vilkovisky algebras for Koszul
Calabi-Yau algebras, proved in \cite{CYZ},
is a special case of Theorem \ref{mainthm}. In other words,
we may view Theorem \ref{mainthm} as a twisted version of Rouquier's conjecture.

\begin{notation} Throughout this paper,
$k$ denotes a field of character $0$. All tensors and Homs are over $k$ unless otherwise specified.
All algebras (resp. coalgebras) are unital and augmented,
(resp. co-unital and co-augmented) over $k$.
If $A$ is an associative algebra,
then $A^{op}$ is its opposite and
$A^e=A\otimes A^{op}$ is its envelope.
Suppose $V_\bullet$ is a graded vector space, then the shift of the gradings of $V_\bullet$ down
by $n$ is denoted by $s^{-n}V_\bullet$ or $V_\bullet[n]$, i.e., $(V_\bullet[n])_m=V_{m+n}$.
\end{notation}

\section{Preliminaries on Hochschild homology}\label{Sect:Pre}

In this section, we recall the Hochschild homology and cohomology of an associative algebra $A$.
These two homology groups, together with the algebraic operations on them, form
the so-called {\it differential calculus}, a notion introduced by Tamarkin and Tsygan in \cite{TT}.

\subsection{Hochschild homology and cohomology of algebras}

For an associative $k$-algebra $A$, let $\bar A$ be its augmentation ideal.
The {\it reduced Hochschild chain complex} of $A$ with coefficients in an $A$-bimodule $M$,
denoted by $\mathrm{CH}_{\bullet}(A;M)$,
is
$$\cdots\rightarrow M\otimes \bar A^{\otimes n}\xrightarrow{b_n} M\otimes \bar A^{\otimes n-1}
\rightarrow \cdots\rightarrow M\otimes \bar A\xrightarrow{b_1} M\rightarrow 0$$
with boundary $b_n$ given by 
\begin{eqnarray*}
b_n(m, \bar a_1,\cdots, \bar a_n)&=
& (m\bar a_1, \bar a_2,\cdots, \bar a_n)+
\sum_{i=1}^{n-1}(-1)^{i}(m, \bar a_1, \cdots, \bar a_i\bar a_{i+1},\cdots, \bar a_n)\\
&&+ (-1)^n (\bar a_nm, \bar a_1,\cdots,\bar  a_{n-1}),
\end{eqnarray*}
for any $m\in M$ and $\bar a_i\in \bar A$, $i=1,\cdots, n$. 
The associated homology is called the {\it Hochschild homology} of $A$ with coefficients in $M$, 
and is denoted by $\mathrm{HH}_\bullet(A;M)$.

The {\it reduced Hochschild cochain complex} of $A$ with values in $M$,
denoted by $\mathrm{CH}^\bullet(A;M)$, is the complex
$$0\rightarrow M\xrightarrow{\delta_0} \mathrm{Hom}_{k}(\bar A,M)\xrightarrow{\delta_1}
\cdots\rightarrow \mathrm{Hom}_{k}(\bar A^{\otimes n},M)\xrightarrow{\delta_n}\cdots$$
with coboundary $\delta_n$ given by 
\begin{eqnarray*}
(\delta_n f)(\bar a_1, \cdots, \bar  a_{n+1})&=&\bar a_1f(\bar  a_2,\cdots, \bar  a_{n+1})+
\sum_{i=1}^n(-1)^{i}f(\bar  a_1,\cdots, \bar  a_i\bar  a_{i+1},\cdots, \bar  a_{n+1})\\
&&+ (-1)^{n+1}f(\bar  a_1,\cdots, \bar  a_n)\bar  a_{n+1}
\end{eqnarray*}
for any $f\in \mathrm{Hom}(\bar  A^{\otimes n},M)$ and $\bar  a_i\in \bar  A$, $i=1,\cdots, n+1$. 
The associated cohomology is called the {\it Hochschild cohomology} of $A$ with values in $M$, and
is denoted by $\mathrm{HH}^\bullet(A;M)$. 

Later we will use the fact that
$\mathrm{HH}_n(A;M)=\mathrm{Tor}_n^{A^e}(A;M)$ and $\mathrm{HH}^n(A;M)=\mathrm{Ext}^n_{A^e}(A;M)$ ({\it c.f.}
\cite{Weibel}, Lemma 9.1.3). Let us recall the Connes cyclic operator on the Hochschild chain complex.

\begin{definition}[Connes cyclic operator]
Assume $A$ is an associative algebra. The Connes cyclic operator 
$$\mathrm{B}: \mathrm{CH}_n(A;A)\rightarrow \mathrm{CH}_{n+1}(A;A)$$
is given by
$$\mathrm{B}(a_0,\bar a_1,\cdots, \bar a_n)
=\sum_{i=0}^n(-1)^{ni}(1, \bar a_i,\cdots, \bar a_n, \bar a_0,\cdots, \bar a_{i-1}).$$
\end{definition}

It is direct to 
check $\mathrm{B}^2=\mathrm{B}b+b\mathrm{B}=0$, and therefore 
$(\mathrm{CH}_{\bullet}(A;A), b, \mathrm{B})$ is a mixed complex in the sense of Kassel \cite{Kassel}.

\begin{remark}
The Hochschild homology and cohomology
can also be defined for {\it differential graded} algebras.
It is better to view these two homology groups as follows:
Suppose $(A,d_A)$ is a possibly differential graded algebra and $(M,d_M)$ is a differential graded $A$-bimodule.
Let $\mathrm B(A)$ be the bar construction of $A$ (see \cite{Getzler,LV,Weibel} for more details). 
Considering the following total complex $$\mathrm{CH}_\bullet(A; M)\cong M\otimes \mathrm B(A),$$
with the total degree of the tensor product,
and the differential is $b=b_0+b_1$ given by
\begin{eqnarray*}
b_0(m,\bar a_1,\cdots,\bar a_n)&=&-(d_Mm, \bar a_1,\cdots,\bar a_n)-\sum_{i=1}^n(-1)^{\epsilon_{i-1}}(m,\bar a_1,\cdots, d_A\bar a_i,\cdots,\bar a_n)
\end{eqnarray*}
and
\begin{eqnarray*}
b_1(m, \bar a_1,\cdots, \bar a_n)&=& -(-1)^{|m|}(m\bar a_1, \bar a_2,\cdots, \bar a_n)-\sum_{i=1}^{n-1}(-1)^{\epsilon_i}(m, \bar a_1,\cdots, \bar a_i\bar a_{i+1},\cdots \bar a_n)\\
&&+ (-1)^{(|\bar a_n|-1)\epsilon_{n-1}} (\bar a_nm, \bar a_1,\cdots,\bar  a_{n-1}),
\end{eqnarray*}
where $\epsilon_i=|m|+|a_1|+\cdots+|a_i|+i$,
for any homogeneous elements $m\in M$ and $\bar a_i\in \bar A$, $i=1,\cdots n$. Here we denote by $|\bar a_i|$ the degree of $\bar a_i$.
For $M=A$ and taking the gradings into account, the cyclic operator $\mathrm B$ can also be defined in the same way.
Similarly, 
$$\mathrm{CH}^\bullet(A; M)\cong\mathrm{Hom}(\mathrm B(A), M),$$
with the differential on the right-hand side analogously defined.
\end{remark}

\subsection{Differential calculus with duality}

Let us recall the Gerstenhaber cup product and bracket on the Hochschild cohomology of an associative algebra $A$, and its actions on the Hochschild
homology of $A$. Let us start with the notion of Gerstenhaber algebras:

\begin{definition}[Gerstenhaber]
A {\it Gerstenhaber algebra} is a graded $k$-vector space $A^{\bullet}$ 
endowed with two bilinear operators 
$\cup: A^m \otimes A^n\rightarrow A^{m+n}$ and $\{-,-\}:A^n\otimes A^m\rightarrow A^{n+m-1}$ 
such that: 
\begin{enumerate}
\item ($A^{\bullet}, \cup$) is a graded commutative associative algebra, i.e. 
$$a\cup b=(-1)^{|a||b|}b\cup a,$$ with associativity for any homogeneous elements $a,b\in A^\bullet$;
\item ($A^{\bullet},\{-,-\}$) is a graded Lie algebra with the bracket $\{-,-\}$ of degree $-1$, i.e. $$\{a,b\}=(-1)^{(|a|-1)(|b|-1)}\{b,a\}$$ and 
$$\big\{a,\{b,c\}\big\}=\big\{\{a,b\},c\big\}+(-1)^{(|a|-1)(|b|-1)}\big\{b,\{a,c\}\big\},$$ 
or any homogeneous elements $a,b,c\in A^\bullet$;
\item the cup product $\cup$ and the Lie bracket $\{-,-\}$ are compatible in the sense that
$$\{a,b\cup c\}=\{a,b\}\cup c+(-1)^{(|a|-1)|b|}b\cup\{a,c\},$$ 
for any homogeneous elements $a,b,c\in A^\bullet$.
\end{enumerate}
\end{definition}


\begin{definition}[Tamarkin-Tsygan \cite{TT}, Definition 3.2.1]
Let $\mathrm{H}^\bullet$ and $\mathrm{H}_\bullet$ be two graded vector spaces. 
A {\it differential calculus} is a sextuple 
$$(\mathrm{H}^\bullet, \cup, \{-,-\}, \mathrm{H}_\bullet, \mathrm{B}, \cap),$$ 
satisfying the following conditions
\begin{enumerate}
\item $(\mathrm{H}^\bullet, \cup, \{-,-\})$ is a Gerstenhaber algebra;

\item $\mathrm{H}_\bullet$ is a graded module over $(\mathrm{H}^\bullet,\cup)$ by the ``cap action"
$$\cap: \mathrm{H}^n\otimes \mathrm{H}_m \rightarrow \mathrm{H}_{m-n}, f\otimes \alpha \mapsto f\cap \alpha,$$
i.e. $(f\cup g)\cap \alpha=f\cap(g\cap \alpha)$ for any $f\in \mathrm{H}^n$, $g\in \mathrm{H}^m$, $\alpha \in \mathrm{H}_s$;


\item there exists a linear operator $\mathrm B: \mathrm{H}_\bullet\rightarrow \mathrm{H}_{\bullet+1}$
such that $\mathrm B^2=0$ and moreover, if we set
$\mathrm{L}_f(\alpha):=\mathrm B(f\cap \alpha)-(-1)^{|f|}f\cap \mathrm B(\alpha)$,
then 
$$\mathrm{L}_{\{f,g\}}(\alpha)=[\mathrm{L}_f,\mathrm{L}_g](\alpha)$$ 
and 
$$(-1)^{|f|+1}\{f,g\}\cap \alpha =\mathrm{L}_f(g\cap \alpha)-(-1)^{|g|(|f|+1)}g\cap(\mathrm{L}_f(\alpha)).$$
\end{enumerate}
\end{definition}

In the above definition, $\mathrm{L}_f(\alpha)$ is called the Lie derivative
of $f$ on $\alpha$.
%
It is shown by Daletskii, Gelfand and Tsygan in \cite{DGT} that
the Hochschild cohomology and homology of an associative algebra
form a differential calculus. Let us give some details:
\begin{enumerate}
\item The Gerstenhaber 
cup product $\cup:\mathrm{CH}^n(A;A)\otimes \mathrm{CH}^m(A;A)\rightarrow \mathrm{CH}^{n+m}(A;A)$
is given by
      $$f\cup g(\bar a_1 ,\cdots, \bar a_{n+m})
      :=f(\bar a_1 ,\cdots, \bar a_{n})g(\bar a_{n+1},\cdots, \bar a_{n+m}).$$
\item The Gerstenhaber Lie bracket $\{-,-\}:\mathrm{CH}^n(A;A)\otimes 
\mathrm{CH}^m(A;A)\rightarrow \mathrm{CH}^{n+m-1}(A;A)$ is given by
      $$\{f,g\}:=f\circ g-(-1)^{(|f|+1)(|g|+1)}g\circ f,$$
      where 
\begin{multline*}
f\circ g(\bar a_1 ,\cdots,\bar a_{n+m-1})\\
:=\sum_{i=0}^{n-1}(-1)^{(|g|+1)i}f(\bar a_1 ,\cdots,\bar a_i,\overline{g(\bar a_{i+1},\cdots,\bar a_{i+m})},
      \bar a_{i+m+1},\cdots,\bar a_{n+m-1}).
 \end{multline*}

\item The cap product $\cap: \mathrm{CH}^n(A;A)\otimes \mathrm{CH}_m(A;A)\rightarrow \mathrm{CH}_{m-n}(A;A)$ 
is given by
$$f\cap (a_0, \bar a_1 ,\cdots, \bar a_m ):=
(a_0 f(\bar a_1 ,\cdots, \bar a_n ),\bar a_{n+1} , \cdots,\bar a_m).$$

\item The differential operator $\mathrm B$ on $\mathrm{CH}_\bullet(A;A)$ is nothing but the Connes cyclic operator.
\end{enumerate}
One can show that the above operations respect the boundary operators {\it up to homotopy} (which we will not address),
and hence are well-defined on the homology level:

\begin{proposition}[Daletskii-Gelfand-Tsygan \cite{DGT}]
Let $A$ be an associative algebra, then the data 
$(\mathrm{HH}^{\bullet}(A;A),\cup,\{-,-\}, \mathrm{HH}_{\bullet}(A;A),\mathrm{B},\cap)$ forms a differential calculus.
\end{proposition}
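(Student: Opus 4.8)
The plan is to verify the three axioms of a differential calculus one at a time, carrying out the computations on the reduced Hochschild (co)chain complexes and then descending to (co)homology. As a preliminary step I would invoke the fact, noted above, that the four operations $\cup$, $\{-,-\}$, $\cap$, $\mathrm{B}$ commute with the Hochschild differentials up to explicit chain homotopies, so that each descends to $\mathrm{HH}^\bullet(A;A)$ and $\mathrm{HH}_\bullet(A;A)$; for $\mathrm{B}$ the required relation $\mathrm{B}^2 = \mathrm{B}b + b\mathrm{B} = 0$ has already been recorded, giving in particular $\mathrm{B}^2 = 0$ on homology.

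Axiom (1), that $(\mathrm{HH}^\bullet(A;A),\cup,\{-,-\})$ is a Gerstenhaber algebra, is Gerstenhaber's classical theorem, and I would reproduce its structure. Graded commutativity and associativity of $\cup$ come from the standard homotopy for the cup product on cochains. For the bracket, the cleanest route is through the circle product $\circ$: one checks that the associator $(f\circ g)\circ h - f\circ(g\circ h)$ is graded symmetric in its last two arguments (the pre-Lie, or right-symmetric, identity), from which the graded antisymmetry and graded Jacobi identity of $\{-,-\}$ follow formally. The Poisson compatibility $\{a, b\cup c\} = \{a,b\}\cup c + (-1)^{(|a|-1)|b|}b\cup\{a,c\}$ is then obtained by a further explicit homotopy on cochains.

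Axiom (2) is essentially formal: unravelling the defining formulas for $\cup$ and $\cap$ shows that $f\cap(g\cap\alpha)$ and $(f\cup g)\cap\alpha$ coincide on chains once the ordering is matched using graded commutativity of the cup product, so the cap product makes $\mathrm{HH}_\bullet(A;A)$ a graded module over $(\mathrm{HH}^\bullet(A;A),\cup)$.

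The substance of the proof, and the step I expect to be the main obstacle, is axiom (3). Here the Lie derivative is \emph{defined} on homology by $\mathrm{L}_f(\alpha) = \mathrm{B}(f\cap\alpha) - (-1)^{|f|}f\cap\mathrm{B}(\alpha)$, i.e.\ as the graded commutator $[\mathrm{B}, \iota_f]$ of $\mathrm{B}$ with the interior product $\iota_f := f\cap(-)$. Neither of the two required identities holds on the nose at chain level; each must be forced by an explicit homotopy exhibiting the difference of the two sides as a boundary. The standard approach, going back to Rinehart and to Gelfand--Daletskii--Tsygan, is to introduce a genuine chain-level Lie derivative operator $L_f$ on $\mathrm{CH}_\bullet(A;A)$, prove the \emph{Cartan homotopy formula} identifying $L_f$ with $[\mathrm{B},\iota_f]$ up to a boundary, and then establish at chain level the two relations $[L_f, L_g] = L_{\{f,g\}}$ and $[L_f, \iota_g] = (-1)^{|f|+1}\iota_{\{f,g\}}$ up to homotopy. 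Transported through the Cartan homotopy formula, these become precisely $\mathrm{L}_{\{f,g\}} = [\mathrm{L}_f,\mathrm{L}_g]$ and the stated magic formula $(-1)^{|f|+1}\{f,g\}\cap\alpha = \mathrm{L}_f(g\cap\alpha) - (-1)^{|g|(|f|+1)}g\cap\mathrm{L}_f(\alpha)$ on homology. The difficulty is purely combinatorial and organisational: the operators $L_f$ and the homotopies are non-canonical sums over cyclic insertions, and the sign bookkeeping --- mixing the internal degrees $|f|,|g|,|\alpha|$ with the cyclic shifts defining $\mathrm{B}$ --- is where all the care is needed. Once these homotopies are produced, passing to homology annihilates the boundary terms and yields the two identities, completing the verification that the sextuple is a differential calculus.
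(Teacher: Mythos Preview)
Your outline is the standard and correct route to this result, but there is nothing in the paper to compare it against: the paper does not prove this proposition at all. It is stated with attribution to Daletskii--Gelfand--Tsygan \cite{DGT} and used as a black box; the sentence immediately preceding it even says explicitly that the chain-level homotopies ``we will not address''. So your proposal is not so much a reconstruction of the paper's argument as a sketch of the classical proof the paper is citing. The strategy you describe---Gerstenhaber's theorem for axiom (1), the formal check for axiom (2), and for axiom (3) the introduction of a chain-level Lie derivative $L_f$ together with the Cartan homotopy $L_f \simeq [\mathrm{B},\iota_f]$ and the homotopies for $[L_f,L_g]\simeq L_{\{f,g\}}$ and $[L_f,\iota_g]\simeq (-1)^{|f|+1}\iota_{\{f,g\}}$---is exactly the approach of \cite{DGT} and of the subsequent literature (e.g.\ Tamarkin--Tsygan \cite{TT}), and would go through as you describe.
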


\subsection{Another example of differential calculus}\label{Subsect:diffcal2}

Assume $A^!$ is a finite dimensional associative algebra.
Denote $A^{\ac}:=\mathrm{Hom}(A^!; k)$. Then $A^\ac$ has a $A^!$-bimodule structure induced by the natural $A^!$-bimodule structure of $A^!$. There are two operators:
$$\cap^*: \mathrm{CH}^\bullet(A^!)\times \mathrm{CH}^\bullet(A^!;A^{\ac})\rightarrow \mathrm{CH}^\bullet(A^!;A^{\ac})$$
given by
$$(f, \alpha)\mapsto f\cap^*\alpha=(-1)^{|f||\alpha|}\alpha\circ f$$
and
$$
\mathrm B^*: \mathrm{CH}^\bullet(A^!;A^\ac)\rightarrow \mathrm{CH}^\bullet(A^!;A^\ac)
$$
given by
$$
\alpha\mapsto (-1)^{|\alpha|}\alpha\circ \mathrm{B}.
$$
Here $\mathrm{B}$ is the Connes cyclic operator on the Hochschild complex $\mathrm{CH}_\bullet(A^!;A^!)$,
and $\mathrm{CH}^n(A^!;A^\ac)$ is viewed as the linear dual space of the latter via
the following identification
$$\mathrm{CH}^n(A^!;A^\ac)=\mathrm{Hom}_k((A^!)^{\otimes n};A^\ac)\cong \mathrm{Hom}_k((A^!)^{\otimes n+1},k).$$

\begin{theorem}
Assume $A^!$ is a finite dimensional associative algebra.
Denote $A^{\ac}:=\mathrm{Hom}_k(A^!; k)$. 
Then 
$$(\mathrm{HH}^\bullet(A^!),\cup, \{-,-\}, \mathrm{HH}^\bullet(A^!;A^{\ac}), \mathrm{B}^*, \cap^*)$$
is a differential calculus.
\end{theorem}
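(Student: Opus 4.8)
The plan is to realise this sextuple as the \emph{linear dual} of the Daletskii--Gelfand--Tsygan differential calculus $(\mathrm{HH}^\bullet(A^!),\cup,\{-,-\},\mathrm{HH}_\bullet(A^!;A^!),\mathrm{B},\cap)$ attached to $A^!$, using that $A^!$ is finite dimensional. The starting point is the identification already recorded above, $\mathrm{CH}^n(A^!;A^{\ac})=\mathrm{Hom}_k((A^!)^{\otimes n},A^{\ac})\cong\mathrm{Hom}_k((A^!)^{\otimes n+1},k)=\mathrm{Hom}_k(\mathrm{CH}_n(A^!;A^!),k)$. First I would check that under this identification the Hochschild coboundary $\delta$ on $\mathrm{CH}^\bullet(A^!;A^{\ac})$ is exactly the $k$-linear transpose of the Hochschild boundary $b$ on $\mathrm{CH}_\bullet(A^!;A^!)$; this is the standard adjunction between chains with coefficients in a bimodule and cochains with coefficients in its linear dual. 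Since $k$ is a field, linear duality is exact, so passing to (co)homology yields a natural isomorphism $\mathrm{HH}^\bullet(A^!;A^{\ac})\cong\mathrm{Hom}_k(\mathrm{HH}_\bullet(A^!;A^!),k)$, under which cohomological degree $n$ plays the role of (the dual of) homological degree $n$.

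Next I would read the operators $\cap^*$ and $\mathrm{B}^*$ through the evaluation pairing $\langle-,-\rangle$ between $\mathrm{CH}^\bullet(A^!;A^{\ac})$ and $\mathrm{CH}_\bullet(A^!;A^!)$. The defining formula $\mathrm{B}^*(\alpha)=(-1)^{|\alpha|}\alpha\circ\mathrm{B}$ says precisely that $\langle\mathrm{B}^*\alpha,c\rangle=(-1)^{|\alpha|}\langle\alpha,\mathrm{B}c\rangle$, i.e.\ $\mathrm{B}^*$ is the signed transpose of the Connes operator $\mathrm{B}$; similarly $f\cap^*\alpha=(-1)^{|f||\alpha|}\alpha\circ(f\cap-)$ is the signed transpose of the chain operator $f\cap-$. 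I would record these two adjunction identities explicitly, as this is where the signs $(-1)^{|\alpha|}$ and $(-1)^{|f||\alpha|}$ enter.

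Finally I would transport each axiom of a differential calculus across this duality, invoking the already-established Daletskii--Gelfand--Tsygan proposition for $A^!$. Axiom (1) is literally the same datum $(\mathrm{HH}^\bullet(A^!),\cup,\{-,-\})$ and needs nothing new. For axiom (2) the module relation $(f\cup g)\cap^*\alpha=f\cap^*(g\cap^*\alpha)$ follows by transposing $(f\cup g)\cap c=f\cap(g\cap c)$, where the graded commutativity of $\cup$ is used to turn the contragredient \emph{right} action into a genuine graded \emph{left} action---this is exactly the role of the sign $(-1)^{|f||\alpha|}$. For axiom (3), $\mathrm{B}^{*2}=0$ is immediate from $\mathrm{B}^2=0$ together with the sign in $\mathrm{B}^*$, while the two Lie-derivative identities follow by transposing their homological counterparts, once one checks that the transposed Lie derivative equals $\mathrm{B}^*(f\cap^*-)-(-1)^{|f|}f\cap^*\mathrm{B}^*(-)$. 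The only genuine work---and the step I expect to be the main obstacle---is the sign bookkeeping in these last two paragraphs: one must confirm that the Koszul signs produced by transposition and by the reordering via graded commutativity of $\cup$ match the signs in the Tamarkin--Tsygan axioms verbatim. The underlying (co)homological identities are inherited for free from the $A^!$ calculus; the subtlety lies entirely in matching conventions.
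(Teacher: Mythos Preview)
Your proposal is correct and follows essentially the same approach as the paper: both dualize the Daletskii--Gelfand--Tsygan calculus by viewing $\cap^*$ and $\mathrm{B}^*$ as signed transposes of $\cap$ and $\mathrm{B}$, invoke the graded commutativity of $\cup$ on $\mathrm{HH}^\bullet(A^!)$ to convert the contragredient right action into a left $\mathrm{HH}^\bullet(A^!)$-module structure, and then transport the Lie-derivative identities by explicitly computing $\mathrm{L}^*_f(\alpha)$ as a sign times $\alpha\circ\mathrm{L}_f$. The paper simply carries out the sign bookkeeping you flagged as the main obstacle.
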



\begin{proof}
We know $(\mathrm{HH}^\bullet(A^!),\cup, \{-,-\})$ is a Gerstenhaber algebra. 
We need to show the following:

(1) $\mathrm{HH}^\bullet(A^!;A^{\ac})$ is a graded $\mathrm{HH}^\bullet(A^!)$-module. In fact, 
\begin{eqnarray*}
(f\cup g)\cap^*(\alpha)(a_1,\cdots, a_n)&=&(-1)^{(|f|+|g|)|\alpha|}\alpha\circ (f\cup g)\cap(a_1,\cdots, a_n)\\
&=&(-1)^{(|f|+|g|)|\alpha|}\alpha\circ (f\cap (g\cap (a_1,\cdots, a_n)))\\
&=& (-1)^{|g||\alpha|}(f\cap^*\alpha)\circ (g\cap (a_1,\cdots, a_n))\\
&=&(-1)^{|f||g|} g\cap^*(f\cap^*\alpha)(a_1,\cdots, a_n).
\end{eqnarray*}
Moreover, since $\mathrm{HH}^\bullet(A)$ is Gerstenhaber, we have $f\cup g=(-1)^{|f||g|}g\cup f$. 
Thus combining the above two identities, we have
$$(g\cup f)\cap^*(\alpha)=g\cap^*(f\cap^*\alpha),$$
i.e., $\mathrm{HH}^\bullet(A^!;A^{\ac})$ is a graded $\mathrm{HH}^\bullet(A^!)$-module.

(2) Observe that
\begin{eqnarray*}
\mathrm L^*_f(\alpha)&=&\mathrm{B}^*(f\cap^*\alpha)-(-1)^{|f|}f\cap^*(\mathrm{B}^*(\alpha))\\
&=&\mathrm{B}^*((-1)^{|f||\alpha|}\alpha\circ f)-(-1)^{|f|}f\cap^*((-1)^{|\alpha|}\alpha\circ \mathrm{B})\\
&=& (-1)^{|\alpha|+|f|+|f||\alpha|}\alpha\circ\mathrm{B}\circ f-(-1)^{|f|+|\alpha|+|f|(|\alpha|-1)}\alpha\circ f\circ \mathrm{B}\\
&=&(-1)^{|\alpha|(|f|+1)} \alpha\circ \mathrm{L}_f,
\end{eqnarray*}
we have that
\begin{eqnarray*}
\mathrm L^*_{\{f,g\}}(\alpha)&=&\alpha\circ {\mathrm L}_{\{f,g\}}=\alpha\circ\{{\mathrm L}_f,{\mathrm L}_g\}
=\{\mathrm L^*_f,\mathrm L^*_g\}(\alpha)
\end{eqnarray*} 
and moreover,
\begin{eqnarray*}
&&(-1)^{|f|+1}\{f,g\}\cap^*(\alpha)(a_1,\cdots, a_n)\\
&=&(-1)^{|f|+1+(|f|+|g|-1)|\alpha|}\alpha\circ \{f,g\}\cap (a_1,\cdots, a_n)\\
&=&(-1)^{(|f|+|g|-1)|\alpha|}\alpha\circ (\mathrm{L}_f(g\cap (a_1,\cdots, a_n))-(-1)^{|g|(|f|+1)}g\cap(\mathrm{L}_f(a_1,\cdots, a_n)))\\
&=&(-1)^{|g||\alpha|}\mathrm{L}_f((\alpha\circ g)\cap (a_1,\cdots, a_n))-(-1)^{|f||\alpha|-|\alpha|+|g||f|+|g|}g\cap^*\alpha\circ \mathrm{L}_f(a_1,\cdots, a_n)\\
&=&\mathrm L^*_f(g\cap^* \alpha)(a_1,\cdots, a_n)-(-1)^{|g|(|f|+1)}g\cap^*\mathrm L^*_f(\alpha)(a_1,\cdots, a_n)\\
&=&[\mathrm L^*_f,g]\cap^*\alpha(a_1,\cdots, a_n).
\end{eqnarray*}
This completes the proof.
\end{proof}

\subsection{The Batalin-Vilkovisky algebra structure}

In this paper we are more concerned with the Batalin-Vilkovisky algebra structure on
the Hochschild cohomology. It naturally comes from the ``noncommutative Poincar\'e duality"(\cite{VdBP})
of the corresponding algebra. Let us first recall the following notion of
{\it differential calculus with duality}, introduced by Lambre in \cite{Lambre}.

\begin{definition}[Lambre \cite{Lambre}]
A differential calculus $(\mathrm{H}^\bullet, \cup, \{-,-\}, \mathrm{H}_\bullet, \mathrm{B}, \cap)$ 
is called a differential calculus {\it with duality} if there exists an integer $n$ and 
an isomorphism of $\mathrm{H}^\bullet$-modules
$$\phi: \mathrm{H}^{\bullet}\rightarrow \mathrm{H}_{n-\bullet}.$$
\end{definition}

\begin{lemma}[Lambre \cite{Lambre} Theorem 1.6 and Lemma 1.5]\label{calwithdualityinducesBV}
Assume $(\mathrm{H}^\bullet,\cup, \{-,-\}, \mathrm{H}_\bullet,\mathrm B, \cap)$ 
is a differential calculus with duality. 
Let $\Delta:=\phi^{-1}\circ \mathrm{B}\circ \phi$. Then
$$\{a,b\}=(-1)^{|a|+1}(\Delta(a\cup b)-\Delta(a)\cup b-(-)^{|a|}a\cup \Delta(b)).$$
\end{lemma}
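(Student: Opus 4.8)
The plan is to exploit the module isomorphism $\phi$ to transport the second Lie-derivative relation of the differential calculus into the claimed identity on $\mathrm{H}^\bullet$. First I would record the elementary consequences of the hypotheses. Writing $\omega:=\phi(1)$ for the image of the unit, the fact that $\phi$ is a morphism of $\mathrm{H}^\bullet$-modules gives $\phi(a)=\phi(a\cup 1)=a\cap\phi(1)=a\cap\omega$, so that $\phi$ is simply \emph{capping with the fundamental class} $\omega$. The definition $\Delta=\phi^{-1}\circ\mathrm{B}\circ\phi$ then reads $\phi\circ\Delta=\mathrm{B}\circ\phi$, i.e. $\mathrm{B}(a\cap\omega)=\phi(\Delta a)$; in particular $\Delta^2=\phi^{-1}\mathrm{B}^2\phi=0$ is immediate from $\mathrm{B}^2=0$, and $\Delta$ lowers cohomological degree by one (so $|\Delta a|=|a|-1$), since $\mathrm{B}$ raises homological degree by one while $\phi$ reverses degrees about $n$.

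The heart of the argument is to specialise the second relation of axiom (3), namely $(-1)^{|f|+1}\{f,g\}\cap\alpha=\mathrm{L}_f(g\cap\alpha)-(-1)^{|g|(|f|+1)}g\cap\mathrm{L}_f(\alpha)$, to the fundamental class $\alpha=\omega$. I would then rewrite every cap-term as a $\phi$-image of a cup-term. The left-hand side becomes $(-1)^{|f|+1}\phi(\{f,g\})$. Using $f\cap(g\cap\omega)=(f\cup g)\cap\omega$ together with $\mathrm{B}(a\cap\omega)=\phi(\Delta a)$ and the definition of $\mathrm{L}_f$, a direct computation turns $\mathrm{L}_f(g\cap\omega)$ into $\phi\big(\Delta(f\cup g)-(-1)^{|f|}f\cup\Delta g\big)$ and turns $\mathrm{L}_f(\omega)$ into $\phi\big(\Delta f-(-1)^{|f|}f\cup\Delta 1\big)$. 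Capping the latter with $g$ and substituting back expresses the entire relation as $\phi$ applied to an identity in $\mathrm{H}^\bullet$, and since $\phi$ is an isomorphism I may cancel it.

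What then remains is sign bookkeeping. After cancelling $\phi$ I would invoke the graded commutativity of $\cup$ together with $|\Delta f|=|f|-1$ to rewrite $(-1)^{|g|(|f|+1)}g\cup\Delta f=\Delta f\cup g$, the two sign exponents adding up to $2|f||g|$. This produces the three expected terms $\Delta(f\cup g)-\Delta f\cup g-(-1)^{|f|}f\cup\Delta g$, and multiplying through by $(-1)^{|f|+1}$ yields exactly the displayed formula with $a=f$, $b=g$.

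The one genuine obstacle is the leftover term $(-1)^{|g|(|f|+1)+|f|}\,g\cup f\cup\Delta 1$, coming from the occurrence of $\mathrm{B}(\omega)$ inside $\mathrm{L}_f(\omega)$: the clean formula holds precisely when $\Delta 1=0$, equivalently when the fundamental class $\omega=\phi(1)$ is $\mathrm{B}$-closed. So the crux of the proof is to verify $\mathrm{B}(\phi(1))=0$. I expect this to be the step that requires the real input of the theory, since it is not forced by the three axioms alone — feeding $f=1$ into the relation only returns the trivial $\mathrm{L}_1=0$ — and in the concrete Hochschild settings it must be extracted from the explicit form of the duality isomorphism and the normalisation of the volume class. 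Once $\Delta 1=0$ is secured, the BV identity, and hence the lemma, follows.
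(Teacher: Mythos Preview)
Your approach is the same as the paper's: set $\omega=\phi(1)$, observe $\phi(a)=a\cap\omega$, specialise the second relation in axiom~(3) to $\alpha=\omega$, expand $\mathrm{L}_a$ via its definition, and match terms. The sign manipulation you outline (using graded commutativity and $|\Delta a|=|a|-1$ to convert $(-1)^{|b|(|a|+1)}b\cup\Delta a$ into $\Delta a\cup b$) is exactly what the paper does when it writes $\phi(\Delta(a)\cup b)=(-1)^{|b|(|a|+1)}\phi(b\cup\Delta(a))$.

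You are in fact more careful than the paper about the term $\mathrm{B}(\omega)$. The paper's proof simply passes from $b\cap\mathrm{L}_a(\omega)$ to $b\cap\mathrm{B}(a\cap\omega)$ without comment, silently discarding the $(-1)^{|a|}a\cap\mathrm{B}(\omega)$ contribution. So the issue you raise is not resolved inside this paper's proof either; it is an implicit hypothesis inherited from Lambre's original. You are correct that it does not follow from the three axioms as written. In the intended Hochschild applications it holds for degree reasons: $\omega=\phi(1)$ lies in $\mathrm{H}_n$, and since $\phi:\mathrm{H}^\bullet\xrightarrow{\cong}\mathrm{H}_{n-\bullet}$ with $\mathrm{H}^\bullet$ non-negatively graded, one has $\mathrm{H}_{n+1}\cong\mathrm{H}^{-1}=0$, whence $\mathrm{B}(\omega)=0$. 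With that observation your proof is complete and matches the paper's.
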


\begin{proof}
Denote the imagine $\phi(1)$ of $1$ by $\omega$,
which is also called the volume form of $\mathrm{H}_\bullet$.
Then we have 
$$\phi(\alpha)=\phi(\alpha\cup 1)=\alpha\cap \phi(1)=\alpha\cap \omega,$$
and therefore
\begin{eqnarray*}
&&(-1)^{|a|+1}\phi(\{a,b\})=(-1)^{|a|+1}\{a,b\}\cap \omega =\mathrm L_a(b\cap \omega)-(-1)^{|b|(|a|+1)}b\cap(\mathrm L_a(\omega))\\
&=&\mathrm B(a\cap(b\cap \omega))-(-1)^{|a|}a\cap \mathrm B(b\cap \omega)-(-1)^{|b|(|a|+1)}b\cap(\mathrm B(a\cap \omega)),
\end{eqnarray*}
and similarly,
\begin{eqnarray*}
\phi(\Delta(a\cup b))&=& \mathrm B\circ \phi(a\cup b)=\mathrm B((a\cup b)\cap \omega)=\mathrm B(a\cap (b\cap \omega)),\\
\phi(\Delta(a)\cup b)&=& (-1)^{|b|(|a|+1)}\phi(b\cup \bigtriangleup(a))=(-1)^{|b|(|a|+1)} b\cap \mathrm B(a\cap \omega)
\end{eqnarray*}
and
\begin{eqnarray*}
\phi(a\cup\Delta(b))&=& a\cap (\mathrm B(b\cap \omega)).
\end{eqnarray*}
Hence we have the identity
$$(-1)^{|a|+1}\phi(\{a,b\})=\phi(\Delta(a\cup b)-\Delta(a)\cup b-(-1)^{|a|}a\cup \Delta(b)).$$
Since $\phi$ is an isomorphism, we have
$$(-1)^{|a|+1}\{a,b\}=\Delta(a\cup b)-\Delta(a)\cup b-(-1)^{|a|}a\cup \Delta(b).$$
This completes the proof.
\end{proof}

The above lemma in fact says that $\mathrm{H}^\bullet$ is a Batalin-Vilkovisky algebra.
Let us recall its definition:
\begin{definition}
A {\it Batalin-Vilkovisky algebra} is a 
Gerstenhaber algebra $(\mathrm{H}^\bullet, \cup, \{-,-\})$ together with an operator $\Delta: \mathrm{H}^\bullet\rightarrow \mathrm{H}^{\bullet-1}$ of degree $-1$ such that $\Delta\circ\Delta=0$, $\Delta(1)=0$ and 
$$\{a,b\}=(-1)^{|a|+1}(\Delta(a\cup b)-\Delta(a)\cup b-(-1)^{|a|}a\cup\Delta(b)),$$
for any homogeneous elements $a,b\in \mathrm{H}^\bullet$.
\end{definition}

\section{Artin-Schelter regular algebras}
In this section, we briefly recall the construction
of the Batalin-Vilkovisky algebra on the Hochschild
cohomology of Artin-Schelter regular algebras with
semisimple Nakayama automorphism,
obtained by Kowalzig and Krahmer in \cite{KK2}. In this section, $A$ will present a connected graded algebra over an algebraically closed field $k$. A graded algebra $A$ is said to be connected if $A_i=0$ for $i<0$ and $A_0=k$.

\begin{definition}[Artin-Schelter \cite{AS}]
A connected graded algebra $A$ is called {\it Artin-Schelter regular} (or AS-regular for short) of dimension $d$ if
\begin{enumerate}
\item $A$ has finite global dimension $n$, and
\item $A$ is Gorenstein, that is, 
$\mathrm{Ext}^i_{A}(k,A)=0$ for $i\neq n$ and $\mathrm{Ext}^n_{A}(k,A)\cong k$.
\end{enumerate}
\end{definition}

Later in 2014 Reyes, Rogalski and Zhang proved in \cite{RRZ}
that AS-regular algebras are in fact {\it twisted Calabi-Yau} algebras (see also Yekutieli and Zhang  \cite{YeZ} for some partial results):

\begin{theorem}[\cite{RRZ}, Lemma 1.2]
Suppose $A$ is a connected graded algebra. 
Then $A$ is AS-regular if and only if it is skew Calabi-Yau (in the graded sense), namely, $A$ satisfies
the following two conditions:
\begin{enumerate}
\item $A$ is homologically smooth, that is, 
$A$, viewed as an $A^e$-module, 
has a bounded, finitely generated projective resolution, and
\item there exists an integer $n$ and an algebra automorphism $\sigma$ of $A$ such that
$$\mathrm{Ext}^i_{A^e}(A,A\otimes A)\cong\left\{
\begin{array}{cl}
0, \quad i\neq n,\\
A_{\sigma},\quad i=n
\end{array}\right. $$
as $A^e$-modules.
\end{enumerate}
\end{theorem}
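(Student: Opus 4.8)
The plan is to show that the two characterisations match clause by clause: the smoothness clauses (finite global dimension versus homological smoothness) are equivalent, and the duality clauses (Gorenstein versus concentration of $\mathrm{Ext}^\bullet_{A^e}(A,A\otimes A)$) are equivalent with the same integer $n$. Throughout I would fix a minimal graded free resolution $P_\bullet\to A$ of $A$ over $A^e$, writing $P_j=A^e\otimes V_j$ with each $V_j$ a finite-dimensional graded space, and I would use the identification of $A$-bimodules $A\otimes A\cong A^e$, so that $\mathrm{Ext}^\bullet_{A^e}(A,A\otimes A)=\mathrm{Ext}^\bullet_{A^e}(A,A^e)$. For the smoothness equivalence, I would invoke that for a connected, locally finite graded algebra one has $\mathrm{gl.dim}\,A=\mathrm{pd}_A k=\mathrm{pd}_{A^e}A$ and that the minimal resolution $P_\bullet$ has finitely generated terms; hence finite global dimension is equivalent to $A$ admitting a bounded finitely generated free $A^e$-resolution, i.e.\ to homological smoothness, and the common length is $n$. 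This lets me assume $P_\bullet$ is finite from the outset.

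The bridge between the one-sided and two-sided Ext groups is the functor $-\otimes_A k$ that kills the right action. Applying it to $P_\bullet$ yields $P_\bullet\otimes_A k=A\otimes V_\bullet$, the minimal free resolution of $k$ as a left $A$-module; dually, writing $C^\bullet:=\mathrm{Hom}_{A^e}(P_\bullet,A^e)=A^e\otimes V^*_\bullet$ (so that $H^\bullet(C^\bullet)=\mathrm{Ext}^\bullet_{A^e}(A,A^e)$), one gets a canonical identification $C^\bullet\otimes_A k\cong\mathrm{Hom}_A(A\otimes V_\bullet,A)$, whence $H^\bullet(C^\bullet\otimes_A k)=\mathrm{Ext}^\bullet_A(k,A)$. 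The crucial point is that $C^\bullet$ is a bounded complex of modules that are free, hence flat, on the right, so $C^\bullet\otimes_A k$ already computes the derived tensor product.

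Given this, the direction skew Calabi-Yau $\Rightarrow$ AS-regular is short. If $\mathrm{Ext}^i_{A^e}(A,A^e)$ vanishes for $i\neq n$ and equals $A_\sigma$ for $i=n$, then as a complex of flat right $A$-modules $C^\bullet$ is quasi-isomorphic to $A_\sigma$ placed in cohomological degree $n$; since $A_\sigma$ is free of rank one as a right module and $A_\sigma\otimes_A k=k$, I obtain that $C^\bullet\otimes_A k$ is quasi-isomorphic to $k$ placed in degree $n$, that is $\mathrm{Ext}^i_A(k,A)=\delta_{i,n}\,k$, which is precisely the Gorenstein condition with the same $n$.

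The hard part is the converse, and this is where I expect the main obstacle: one must upgrade one-sided (Gorenstein) data to two-sided (bimodule) data. The reduction $C^\bullet\otimes_A k$ only sees $\mathrm{Ext}^\bullet_A(k,A)$, and the hyper-Tor spectral sequence relating $\mathrm{Tor}^A_p\bigl(\mathrm{Ext}^q_{A^e}(A,A^e),k\bigr)$ to $\mathrm{Ext}^{q-p}_A(k,A)$ does not by itself force the bimodule Ext to be concentrated. My plan is to exploit minimality: since $P_\bullet$ is minimal, $C^\bullet$ is a minimal complex of free $A^e$-modules, so $k\otimes_{A^e}C^\bullet=V^*_\bullet$ has zero differential, and the graded Nakayama lemma then transfers the concentration of $\mathrm{Ext}^\bullet_A(k,A)$ to the finitely generated graded bimodules $R^i:=\mathrm{Ext}^i_{A^e}(A,A^e)$; the vanishing $\mathrm{Ext}^i_A(k,A)=0$ for $i\neq n$ forces $R^i\otimes_A k=0$, hence $R^i=0$, while $\mathrm{Ext}^n_A(k,A)=k$ forces $R^n$ to be cyclic and generated in a single degree. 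It remains to identify $R^n$ with $A_\sigma$: because $A$ is homologically smooth and the $R^i$ are concentrated in degree $n$, the bimodule $R^n=\mathrm{RHom}_{A^e}(A,A^e)$ (up to the degree shift) is invertible, and invertible graded bimodules over a connected graded algebra are classified, via $A_0=k$ and the graded Picard group, as twists $A_\sigma$ by a graded automorphism (up to an internal degree shift). Pinning down this invertibility and matching $\sigma$ with the Nakayama automorphism is the step I expect to absorb most of the work.
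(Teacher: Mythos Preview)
The paper does not prove this theorem at all: it is stated with the attribution ``\cite{RRZ}, Lemma 1.2'' and immediately used, with no argument supplied. So there is nothing in the paper to compare your proposal against; any proof you write will necessarily go beyond what the paper does.

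As for your outline itself, the forward direction (skew Calabi--Yau $\Rightarrow$ AS-regular) is fine, and the equivalence of the smoothness clauses via $\mathrm{pd}_{A^e}A=\mathrm{pd}_A k=\mathrm{gl.dim}\,A$ is the standard route. The gap is in the converse, precisely at the step you flagged. You write that minimality of $P_\bullet$ over $A^e$ gives $k\otimes_{A^e}C^\bullet=V^*_\bullet$ with zero differential, and then claim this ``transfers the concentration of $\mathrm{Ext}^\bullet_A(k,A)$'' to the bimodule Ext groups. But these are different reductions: $k\otimes_{A^e}C^\bullet$ kills both $A$-actions, whereas the complex computing $\mathrm{Ext}^\bullet_A(k,A)$ is $C^\bullet\otimes_A k$, killing only the right action. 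Minimality over $A^e$ does not make the differential of $C^\bullet\otimes_A k$ vanish, so you cannot directly read off $R^i\otimes_A k=0$ from $\mathrm{Ext}^i_A(k,A)=0$. What actually closes this gap is a descending induction on $i$ (or equivalently an analysis of the spectral sequence you mentioned, using that the resolution has finite length): one shows first that $R^n$ surjects onto something whose reduction is $k$, and then that each $R^i$ for $i<n$ has $R^i\otimes_A k=0$ by peeling off the top of the complex. The identification of $R^n$ with a twist $A_\sigma$ then follows, as you say, from invertibility of $R^n$ and the classification of invertible graded bimodules over a connected graded algebra; this part of your plan is sound but, as you anticipated, requires the most care.
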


In the above theorem, the automorphism $\sigma$ is called {\it Nakayama automorphism}. 
The $A^e$-module structure of $\mathrm{Ext}^d_{A^e}(A,A\otimes A)$ is induced by 
the inner module structure on $A\otimes A$: $a\cdot (b\otimes c)\cdot d=bd\otimes ac$. 
The module $A_{\sigma}$ is a vector space $A$ equipped with the $A$-bimodule 
structure $a\cdot b\cdot c=ab\sigma(c)$, for any $a,b,c\in A$.
We say $\sigma$ is {\it semisimple} if it is diagonalizable.

In the following, we will always use the above equivalent definition of AS-regular algebras,
rather than its original definition.

\subsection{Results of Kowalzig and Krahmer}

Assume $A$ is an AS-regular algebra with semi-simple Nakayama automorphism $\sigma$. 
In \cite{KK1,KK2}, Kowalzig and Krahmer constructed a differential calculus 
with duality on $(\mathrm{HH}^\bullet(A), \mathrm{HH}_\bullet(A; A_\sigma))$.
Thus as a corollary, they obtained
a Batalin-Vilkovisky algebra structure on $\mathrm{HH}^\bullet(A)$.

First, let us observe that, compared to the differential calculus structure given in \S\ref{Sect:Pre},
there is no Connes operator on $\mathrm{CH}_\bullet(A; A_\sigma)$.
What Kowalzig and Krahmer did is to consider a subcomplex of $\mathrm{CH}_\bullet(A; A_\sigma)$,
whose homology is $\mathrm{HH}_\bullet(A; A_\sigma)$ and
on which the Connes operator is well-defined. Let us briefly recall their results.

Let 
$$\mathrm{B}: \mathrm{CH}_p(A;A_{\sigma})\rightarrow \mathrm{CH}_{p+1}(A;A_{\sigma})$$
be
given by
\begin{eqnarray*}
\mathrm{B}(a_0, a_1,\cdots, a_n)&
:=&\sum_{i=0}^n(-1)^{ni}(1, a_i,\cdots, a_n, a_0, \sigma(a_1),\cdots, \sigma(a_{i-1})).
\end{eqnarray*}
Since $\sigma$ is semisimple,  there is a decomposition of $\mathrm{CH}_\bullet(A;A_{\sigma})$ as follows.
Let $\Lambda$ be the set of eigenvalues of $\sigma$ acting on $A$ and $A_{\lambda}$ be the eigenvalue space corresponding
to $\lambda\in\Lambda$. Denote
$$\mathrm{CH}_\bullet^{\lambda}(A;A_{\sigma})
:=\bigoplus_{i}\bigoplus_{\prod_{j=1}^i\lambda_{i_j}=\lambda}
A_{\lambda_{i_1}}\otimes\cdots\otimes A_{\lambda_{i_i}},\quad \lambda_{i_j}\in\Lambda.$$
The restriction of $b$ makes $\mathrm{CH}_\bullet^{\lambda}(A;A_{\sigma})$
 to be a subcomplex of $\mathrm{CH}_\bullet(A;A_{\sigma})$ and we denote its homology $\mathrm{H}_\bullet(\mathrm{CH}^\lambda_\bullet(A;A_\sigma))$ by $\mathrm{HH}^\lambda_\bullet(A;A_\sigma)$.
A key observation is that, the restriction of $\mathrm B$ on the subcomplex $\mathrm{CH}^1_\bullet(A;A_{\sigma})$ 
 is exactly the Connes operator. Hence $(\mathrm{CH}^1_\bullet(A;A_{\sigma}), b,\mathrm{B})$ 
 is a mixed complex.

Let
$$\mathrm{T}: \mathrm{CH}_p(A;A_{\sigma})\rightarrow \mathrm{CH}_p(A;A_{\sigma})$$
be given by
$$\mathrm{T}(a_0,\cdots, a_n)=(\sigma(a_0),\cdots, \sigma(a_n)).$$

\begin{lemma}[\cite{KK2},(2.19)]
Let $\mathrm B$ and $\mathrm T$ be as above, then there exists 
$$b\mathrm{B}+\mathrm{B}b=\mathrm{Id}-\mathrm{T}$$
on the complex $\mathrm{CH}_\bullet(A;A_{\sigma})$.
\end{lemma}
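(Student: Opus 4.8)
The plan is to recognize the asserted identity as the twisted analogue of Connes' fundamental homotopy relation $b\mathrm{B}+\mathrm{B}b=0$, and to prove it through the para-cyclic formalism adapted to the coefficient bimodule $A_\sigma$. First I would organize $\mathrm{CH}_\bullet(A;A_\sigma)$ into a para-cyclic module by introducing, alongside the Hochschild boundary $b$, the auxiliary (non-cyclic) Hochschild boundary $b'$ obtained by omitting the wrap-around face, the extra degeneracy $s(a_0,\dots,a_n)=(1,a_0,\dots,a_n)$, the twisted cyclic operator
$$t(a_0,a_1,\dots,a_n)=(-1)^n(\sigma(a_n),a_0,a_1,\dots,a_{n-1}),$$
and the norm operator $N=\sum_{i=0}^n t^i$ in degree $n$. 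The point of departure is to rewrite the given Connes operator in the standard cyclic-theoretic form $\mathrm{B}=(\mathrm{Id}-t)sN$; matching this with the explicit summation formula defining $\mathrm{B}$ is a normalization check, since on the reduced complex the two representatives agree.

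Second, I would verify the structural (Connes) identities that hold in any para-cyclic module, now keeping track of the single occurrence of $\sigma$ introduced by the twisted cyclic operator and the twisted wrap-around face: namely $b's+sb'=\mathrm{Id}$ (so that $s$ contracts $b'$), $b\,(\mathrm{Id}-t)=(\mathrm{Id}-t)\,b'$, and $Nb=b'N$. Each of these is proved by expanding the face maps and observing that the twist $\sigma$ enters only through the wrap-around face producing the $A_\sigma$-action, so the verifications are formally identical to the untwisted ones except for one bookkept $\sigma$.

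Third, and this is the heart of the matter, I would compute the top power of the cyclic operator. Whereas in the untwisted setting $t^{n+1}=\mathrm{Id}$, here cycling a chain of length $n+1$ all the way around feeds every slot through the twist exactly once, so that
$$t^{n+1}(a_0,\dots,a_n)=(-1)^{n(n+1)}(\sigma(a_0),\dots,\sigma(a_n))=(\sigma(a_0),\dots,\sigma(a_n))=\mathrm{T}(a_0,\dots,a_n),$$
using $(-1)^{n(n+1)}=1$. This single computation is what ultimately replaces $0$ by $\mathrm{Id}-\mathrm{T}$ on the right-hand side.

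Finally I would assemble the pieces. Writing $\mathrm{B}=(\mathrm{Id}-t)sN$ and applying the three structural identities in turn,
$$b\mathrm{B}+\mathrm{B}b=(\mathrm{Id}-t)b'sN+(\mathrm{Id}-t)sb'N=(\mathrm{Id}-t)(b's+sb')N=(\mathrm{Id}-t)N,$$
followed by the telescoping $(\mathrm{Id}-t)N=\mathrm{Id}-t^{n+1}=\mathrm{Id}-\mathrm{T}$ from Step three. I expect the main obstacle to be purely a matter of sign- and twist-bookkeeping: confirming that $\mathrm{CH}_\bullet(A;A_\sigma)$ genuinely forms a para-cyclic module, so that the three operator identities hold on the nose with $\sigma$ appearing in exactly the right place, and reconciling the explicit summation formula for $\mathrm{B}$ with the operator expression $(\mathrm{Id}-t)sN$. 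Once the para-cyclic identities are in place the computation is formal, and the appearance of $\mathrm{Id}-\mathrm{T}$ is forced entirely by $t^{n+1}=\mathrm{T}$.
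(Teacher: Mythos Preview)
Your approach is correct and is precisely the para-cyclic argument of Kowalzig--Kr\"ahmer that the paper merely cites without reproducing; the paper gives no proof of its own beyond the reference to \cite[(2.19)]{KK2}. The only caveat is the one you already flag: your explicit formula for $t$ must be matched to the paper's convention for $A_\sigma$ (right action twisted) so that $sN$ agrees with the displayed $\mathrm{B}$, but once that bookkeeping is done the identities $(1-t)b'=b(1-t)$, $b'N=Nb$, $b's+sb'=\mathrm{Id}$ and $t^{n+1}=\mathrm{T}$ assemble exactly as you describe.
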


As an immediate corollary, we have
\begin{equation}\label{equivsubchaincomplex1}
\mathrm{HH}_\bullet(A;A_{\sigma})\cong\mathrm{HH}^1_\bullet(A;A_{\sigma}).
\end{equation}
Via this isomorphism, we obtain the Connes operator $\mathrm B$ on $\mathrm{HH}_\bullet(A; A_\sigma)$.

Similarly, there is a decomposition of the Hochschild cochain complex $\mathrm{CH}^\bullet(A;A)$. 
Let
$$\mathrm{CH}_{\mu}^n(A;A):=\{f\in \mathrm{CH}^n(A;A)|
f(\bar A_{\mu_1}\otimes\cdots\otimes \bar A_{\mu_m})\subset A_{\mu\mu_1\cdots\mu_m}\}.$$
The restriction coboundary $b$ 
makes $\mathrm{CH}^\bullet_{\mu}(A;A)$ to be a subcomplex of 
$\mathrm{CH}^\bullet(A;A)$ and we denote its cohomology $\mathrm{H}^\bullet(\mathrm{CH}^\bullet_{\mu}(A;A))$ 
by $\mathrm{HH}^\bullet_{\mu}(A;A)$. 
In a similar fashion,  Kowalzig and Krahmer proved in \cite{KK1} that the cohomology
is concentrated in the subcomplex corresponding to the eigenvalue 1, namely
\begin{equation}\label{equivsubcochaincomplex1}
\mathrm{HH}^\bullet(A;A)\cong\mathrm{HH}_1^\bullet(A;A).
\end{equation}

It is direct to check that the Gerstenhaber cup product, bracket and cap action restrict to the following maps:
for $\lambda,\mu\in \Lambda$, 
$$
\begin{array}{rccl}\cup:&\mathrm{CH}^p_{\lambda}(A;A)\otimes \mathrm{CH}^q_{\mu}(A;A)&
\rightarrow &\mathrm{CH}^{p+q}_{\lambda\mu}(A;A),\\
\{-,-\}:& \mathrm{CH}^p_{\lambda}(A;A)\otimes \mathrm{CH}^q_{\mu}(A;A)&
\rightarrow &\mathrm{CH}^{p+q-1}_{\lambda\mu}(A;A),\\
\cap:&\mathrm{CH}^{\lambda}_{p}(A;A_{\sigma})\otimes \mathrm{CH}^q_{\mu}(A;A)
&\rightarrow& \mathrm{CH}_{p-q}^{\lambda\mu}(A;A_{\sigma}).
\end{array}
$$
Considering the case of eigenvalue $\lambda=\mu=1$, we have the following theorem.

\begin{theorem}[\cite{KK1}, Theorem 1; \cite{KK2} Theorem 1.5]
Let $\cup_1$, $\cap_1$ and $\{-,-\}_1$ be the restrictions of the cup product, cap product and 
Gerstenhaber bracket to the homology and cohomology spaces associated with the eigenvalue $\lambda=1$. 
Then together with the Connes operator $\mathrm{B}$, they give on
\begin{equation}\label{diffcalonsub1}
(\mathrm{HH}^\bullet_1(A;A),\cup_1, \{-,-\}_1,\mathrm{HH}_\bullet^1(A;A_{\sigma}),\mathrm{B},\cap_1)
\end{equation}
a differential calculus structure.
\end{theorem}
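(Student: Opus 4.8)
The plan is to verify the three axioms of a differential calculus one by one, exploiting the fact that the Nakayama-eigenvalue decomposition is compatible with every operation and that the twist operator $\mathrm{T}$ degenerates to the identity on the eigenvalue-$1$ parts. First I would record two structural facts. Since $\sigma$ multiplies each tensor factor of $A_{\lambda_{i_1}}\otimes\cdots\otimes A_{\lambda_{i_i}}$ by its eigenvalue, the operator $\mathrm{T}$ acts on $\mathrm{CH}^\lambda_\bullet(A;A_\sigma)$ as multiplication by $\lambda$; in particular $\mathrm{T}=\mathrm{Id}$ on $\mathrm{CH}^1_\bullet(A;A_\sigma)$, so the identity $b\mathrm{B}+\mathrm{B}b=\mathrm{Id}-\mathrm{T}$ becomes $b\mathrm{B}+\mathrm{B}b=0$ there and $\mathrm{B}$ descends to $\mathrm{HH}^1_\bullet(A;A_\sigma)$, with $\mathrm{B}^2=0$ because $(\mathrm{CH}^1_\bullet(A;A_\sigma),b,\mathrm{B})$ is the mixed complex recalled above. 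Secondly, the eigenvalue grading is multiplicative for $\cup$, $\{-,-\}$ and $\cap$, so the eigenvalue-$1$ data is closed under $\cup_1$, $\{-,-\}_1$ and $\cap_1$, the last because $1\cdot 1=1$.

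With these in hand, axioms (1) and (2) are inherited rather than re-proved. The subspace $\mathrm{HH}^\bullet_1(A;A)$ is a sub-Gerstenhaber-algebra of $\mathrm{HH}^\bullet(A;A)$ closed under both $\cup$ and $\{-,-\}$, so graded commutativity, the Jacobi identity and the Leibniz rule hold on it automatically. For the module axiom, the chain-level associativity $(f\cup g)\cap\alpha=f\cap(g\cap\alpha)$ is valid for coefficients in any bimodule, in particular $A_\sigma$; restricting both arguments to eigenvalue $1$ endows $\mathrm{HH}^1_\bullet(A;A_\sigma)$ with the structure of a graded $\mathrm{HH}^\bullet_1(A;A)$-module.

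The real content is axiom (3), the two Cartan-type identities for the Lie derivative $\mathrm{L}_f(\alpha)=\mathrm{B}(f\cap\alpha)-(-1)^{|f|}f\cap\mathrm{B}(\alpha)$. My approach would be to establish them first at the level of the twisted chain complex $\mathrm{CH}_\bullet(A;A_\sigma)$ and then pass to eigenvalue $1$. The engine is the chain-level Cartan homotopy formula $\mathrm{L}_f=[\mathrm{B},\iota_f]$ with $\iota_f\colon\alpha\mapsto f\cap\alpha$, from which $\iota_{\{f,g\}}=[\mathrm{L}_f,\iota_g]$ and then $\mathrm{L}_{\{f,g\}}=[\mathrm{L}_f,\mathrm{L}_g]$ follow, up to explicit homotopies, exactly as in Rinehart's and the Gelfand--Daletskii--Tsygan \cite{DGT} noncommutative calculus. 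I would write these homotopies out on $\mathrm{CH}_\bullet(A;A_\sigma)$, tracking simultaneously the cyclic signs in $\mathrm{B}$ and the $\sigma$-insertions it carries.

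The hard part will be exactly this sign-and-twist bookkeeping. In the untwisted case the homotopy formulas are classical, but here each cyclic rearrangement in $\mathrm{B}$ applies $\sigma$ to the permuted entries, so the would-be homotopies fail to close on the nose and their defects are governed by $\mathrm{Id}-\mathrm{T}$. The crux is therefore to show that every such $\mathrm{T}$-dependent discrepancy is a multiple of $\mathrm{Id}-\mathrm{T}$; once this is verified, all defects vanish on the eigenvalue-$1$ subcomplex, the twisted identities collapse to the genuine Cartan relations, and the differential calculus axioms hold on $(\mathrm{HH}^\bullet_1(A;A),\mathrm{HH}^1_\bullet(A;A_\sigma))$.
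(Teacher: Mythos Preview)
The paper does not supply its own proof of this theorem; it is quoted directly from \cite{KK1,KK2} and used as input for the later argument. So there is nothing in the paper to compare your proposal against line by line.

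That said, your outline is essentially the strategy carried out in \cite{KK2}. The structural observations you list---that $\mathrm{T}$ acts as the identity on the eigenvalue-$1$ subcomplex so that $(\mathrm{CH}^1_\bullet(A;A_\sigma),b,\mathrm{B})$ is a genuine mixed complex, and that the eigenvalue grading is multiplicative for $\cup$, $\{-,-\}$, $\cap$---are exactly what the paper records just before stating the theorem. Your plan for axiom~(3), namely to establish the Cartan homotopy identities on the twisted complex with $\mathrm{Id}-\mathrm{T}$-type defects and then restrict to eigenvalue~$1$, is the approach of \cite[\S2]{KK2}. The one caution is that the homotopies required (for $\iota_{\{f,g\}}\simeq[\mathrm{L}_f,\iota_g]$ in particular) are lengthy to write out in the twisted setting and are the substance of several pages in \cite{KK2}; your proposal correctly identifies this as the hard part but does not yet carry it out.
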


In 2008, Brown and Zhang obtained the following

\begin{theorem}[\cite{BZ}, Corrollary 0.4]\label{PDforASregular}
Suppose $A$ is an AS-regular algebra of finite global dimension $n$. Then we have the following isomorphism
\begin{equation}\label{isoofbz}
\mathrm{HH}_{n-\bullet}(A; A_{\sigma})\cong \mathrm{HH}^\bullet(A;A).
\end{equation}
\end{theorem}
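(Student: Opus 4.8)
The plan is to reinterpret both sides through the derived functors recalled in $\S\ref{Sect:Pre}$ and then run a Van den Bergh-type duality argument driven by the skew Calabi-Yau property. Using the identifications $\mathrm{HH}^\bullet(A;A)=\mathrm{Ext}^\bullet_{A^e}(A,A)$ and $\mathrm{HH}_\bullet(A;A_\sigma)=\mathrm{Tor}^{A^e}_\bullet(A,A_\sigma)$, the claim becomes a natural isomorphism
$$
\mathrm{Ext}^\bullet_{A^e}(A,A)\cong \mathrm{Tor}^{A^e}_{n-\bullet}(A,A_\sigma).
$$
Since $A$ is AS-regular it is homologically smooth, so I would first fix a bounded resolution $P_\bullet\to A$ of $A$ by finitely generated projective $A^e$-modules. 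Finite generation and projectivity make $A$ a perfect object in the derived category of $A^e$-modules, which is exactly the hypothesis needed to move the coefficient module across the $\mathrm{Hom}$.

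The key step is the natural isomorphism, valid for $A$ perfect and any $A$-bimodule $M$,
$$
\mathrm{RHom}_{A^e}(A,M)\cong \mathrm{RHom}_{A^e}(A,A^e)\otimes^{\mathrm L}_{A^e} M,
$$
which at the cochain level is the elementary isomorphism $\mathrm{Hom}_{A^e}(P,A^e)\otimes_{A^e}M\cong \mathrm{Hom}_{A^e}(P,M)$ for $P$ finitely generated projective, applied termwise to $P_\bullet$. Specializing to $M=A$ and invoking the skew Calabi-Yau condition $\mathrm{RHom}_{A^e}(A,A^e)\cong A_\sigma[-n]$ gives
$$
\mathrm{RHom}_{A^e}(A,A)\cong A_\sigma[-n]\otimes^{\mathrm L}_{A^e}A\cong \big(A_\sigma\otimes^{\mathrm L}_{A^e}A\big)[-n].
$$
Taking cohomology in degree $\bullet$ then yields $\mathrm{Ext}^\bullet_{A^e}(A,A)$ on the left and $\mathrm{Tor}^{A^e}_{n-\bullet}(A,A_\sigma)=\mathrm{HH}_{n-\bullet}(A;A_\sigma)$ on the right, which is the desired duality.

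I expect the main obstacle to lie not in the homological skeleton above but in the careful bookkeeping of the two commuting bimodule structures carried by $A^e$ and by $\mathrm{Hom}_{A^e}(P_\bullet,A^e)$. One must check that the residual structure used to compute $\mathrm{Ext}$ and the structure used to form $\otimes_{A^e}M$ are the correct and compatible ones, match the left- versus right-module conventions implicit in $\mathrm{Tor}^{A^e}_\bullet(A,A_\sigma)$, and in particular track precisely where the Nakayama twist enters: the module $\mathrm{Ext}^n_{A^e}(A,A^e)$ is $A_\sigma$ as an $A^e$-module through its inner bimodule structure, and it is this twist that survives into $A_\sigma\otimes^{\mathrm L}_{A^e}A$ and distinguishes $\mathrm{HH}_\bullet(A;A_\sigma)$ from ordinary Hochschild homology. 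Verifying that the chain-level isomorphism is natural, and hence descends to homology as stated, is the technical heart of the argument.
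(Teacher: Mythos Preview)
Your proposal is correct and follows essentially the same Van den Bergh-type argument as the paper: both proofs use homological smoothness to identify $\mathrm{RHom}_{A^e}(A,A^e)\otimes^{\mathrm L}_{A^e}M$ with $\mathrm{RHom}_{A^e}(A,M)$, and the skew Calabi-Yau condition to replace $\mathrm{RHom}_{A^e}(A,A^e)$ by $A_\sigma[-n]$. The only cosmetic difference is that the paper runs the chain of isomorphisms starting from the $\mathrm{Tor}$ side rather than the $\mathrm{Ext}$ side.
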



\begin{proof}
Since $A$ is homologically smooth, we have the following
\begin{eqnarray*}
\mathrm{HH}_\bullet(A; A_\sigma)&=&\mathrm{Tor}_\bullet^{A^e}(A, A_\sigma)\\
&=&\mathrm H_\bullet(A\otimes_{A^e}^{\mathrm L} A_\sigma)\\
&=&\mathrm H_\bullet(A\otimes_{A^e}^{\mathrm L}\mathrm{RHom}_{A^e}(A,A\otimes A)[n])\\
&=&\mathrm H_\bullet(\mathrm{RHom}_{A^e}(A, A)[n])\\
&=&\mathrm{Ext}^{n-\bullet}_{A^e}(A, A)\\
&=&\mathrm{HH}^{n-\bullet}(A),
\end{eqnarray*}
which completes the proof.
\end{proof}

Thus combining \eqref{equivsubchaincomplex1}-\eqref{isoofbz} we obtain
the following:

\begin{theorem}[\cite{KK2}, Theorem 4.25]
Suppose $A$ is an AS-regular algebra 
with semisimple Nakayama automorphism $\sigma$. Then
$$(\mathrm{HH}^\bullet(A;A),\cup, \{-,-\},\mathrm{HH}_\bullet(A;A_{\sigma}), \mathrm{B},\cap)$$
forms a differential calculus with duality.
\end{theorem}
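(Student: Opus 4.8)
The plan is to assemble the statement from three ingredients that are already in hand: the differential calculus carried by the eigenvalue-$1$ subcomplexes in \eqref{diffcalonsub1}, the two concentration isomorphisms \eqref{equivsubchaincomplex1} and \eqref{equivsubcochaincomplex1}, and the twisted Poincar\'e duality of Brown and Zhang in \eqref{isoofbz}. First I would transport the calculus of \eqref{diffcalonsub1} to the full groups. The cup product, bracket and cap action all preserve the eigenvalue grading (as recorded in the three restriction maps stated just above), so they restrict to the eigenvalue-$1$ parts, and the isomorphisms \eqref{equivsubchaincomplex1} and \eqref{equivsubcochaincomplex1} identify these restrictions with the operations on $\mathrm{HH}^\bullet(A;A)$ and $\mathrm{HH}_\bullet(A;A_\sigma)$. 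This is what makes the sextuple in the statement meaningful: in particular the operator $\mathrm{B}$ occurring there is, under \eqref{equivsubchaincomplex1}, the genuine Connes operator on $\mathrm{CH}^1_\bullet(A;A_\sigma)$, where it squares to zero and is compatible with $b$; on the full complex $\mathrm{CH}_\bullet(A;A_\sigma)$ the twisted formula for $\mathrm{B}$ becomes a differential only after passing to eigenvalue $1$.

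It remains to upgrade this differential calculus to one \emph{with duality}, i.e.\ to produce an isomorphism of $\mathrm{HH}^\bullet(A;A)$-modules $\phi\colon \mathrm{HH}^\bullet(A;A)\to \mathrm{HH}_{n-\bullet}(A;A_\sigma)$, where the source is a module over itself by $\cup$ and the target by $\cap$. The underlying graded-vector-space isomorphism is exactly \eqref{isoofbz}. I would realize $\phi$ concretely as cap product with a fundamental class (volume form) $\omega\in \mathrm{HH}_n(A;A_\sigma)$, so that $\phi(f)=f\cap\omega$. Given this description, the module property $\phi(f\cup g)=f\cap\phi(g)$ is immediate from the associativity axiom $(f\cup g)\cap\alpha=f\cap(g\cap\alpha)$ of the differential calculus, and bijectivity is the content of Theorem \ref{PDforASregular}. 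Moreover, since $1\in\mathrm{HH}^0$ lies in eigenvalue $1$ and $\cap$ multiplies eigenvalues, $\omega=\phi(1)$ lands in $\mathrm{HH}^1_n(A;A_\sigma)$, precisely where the honest Connes operator lives; this is what will make $\Delta:=\phi^{-1}\circ\mathrm{B}\circ\phi$ well defined and let Lemma \ref{calwithdualityinducesBV} deliver the Batalin-Vilkovisky structure.

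The main obstacle is the second step: showing that the abstractly defined isomorphism \eqref{isoofbz}, which in the proof of Theorem \ref{PDforASregular} arises as a chain of derived-category identifications, genuinely coincides with cap product by a single homology class $\omega$, and hence is a map of $\mathrm{HH}^\bullet(A;A)$-modules rather than merely of graded vector spaces. Concretely I would trace the fundamental cycle of the homologically smooth algebra $A$ through the sequence $A\otimes_{A^e}^{\mathrm{L}}A_\sigma\simeq A\otimes_{A^e}^{\mathrm{L}}\mathrm{RHom}_{A^e}(A,A\otimes A)[n]\simeq \mathrm{RHom}_{A^e}(A,A)[n]$ and check the naturality of the cap action along each arrow, so that the net map is identified with $f\mapsto f\cap\omega$. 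Once this module compatibility is verified, the statement follows at once from the definition of a differential calculus with duality.
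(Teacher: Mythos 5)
Your proposal follows essentially the same route as the paper, which proves this theorem simply by combining the eigenvalue-$1$ differential calculus \eqref{diffcalonsub1}, the concentration isomorphisms \eqref{equivsubchaincomplex1} and \eqref{equivsubcochaincomplex1}, and the Brown--Zhang duality \eqref{isoofbz}, deferring all details to \cite{KK2}. The one point you treat more carefully than the paper --- verifying that the Brown--Zhang isomorphism is a map of $\mathrm{HH}^\bullet(A;A)$-modules, i.e.\ is realized as cap product with a fundamental class --- is a genuine requirement of the definition of differential calculus with duality that the paper leaves implicit in the citation, so your attention to it is warranted rather than a deviation.
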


Combining the above theorem with Lemma \ref{calwithdualityinducesBV}, we obtain:

\begin{theorem}[\cite{KK2}, Theorem 1.5]
If $A$ is an AS-regular algebra with semisimple Nakayama automorphism, then the Hochschild cohomology $\mathrm{HH}^\bullet(A; A)$ of $A$ is a Batalin-Vilkovisky algebra.
\end{theorem}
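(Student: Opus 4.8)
The plan is to obtain the Batalin-Vilkovisky structure as a formal consequence of the differential calculus with duality established in the preceding theorem, by invoking Lemma \ref{calwithdualityinducesBV}. Concretely, by that theorem the sextuple $(\mathrm{HH}^\bullet(A;A),\cup, \{-,-\},\mathrm{HH}_\bullet(A;A_{\sigma}), \mathrm{B},\cap)$ is a differential calculus with duality, the duality isomorphism being the Brown-Zhang isomorphism $\phi\colon \mathrm{HH}^\bullet(A)\xrightarrow{\ \sim\ }\mathrm{HH}_{n-\bullet}(A;A_\sigma)$ of \eqref{isoofbz}, followed by the identification \eqref{equivsubchaincomplex1} that carries the Connes operator $\mathrm B$ onto $\mathrm{HH}_\bullet(A;A_\sigma)$. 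I would then set $\Delta:=\phi^{-1}\circ\mathrm B\circ\phi$ and claim that $(\mathrm{HH}^\bullet(A),\cup,\{-,-\},\Delta)$ is the desired Batalin-Vilkovisky algebra.

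First I would record that $(\mathrm{HH}^\bullet(A),\cup,\{-,-\})$ is a Gerstenhaber algebra, which is already part of the differential calculus structure and holds for any associative algebra. Next, Lemma \ref{calwithdualityinducesBV} applies verbatim to this differential calculus with duality and yields precisely the compatibility identity
\[
\{a,b\}=(-1)^{|a|+1}\bigl(\Delta(a\cup b)-\Delta(a)\cup b-(-1)^{|a|}a\cup\Delta(b)\bigr)
\]
required in the definition of a Batalin-Vilkovisky algebra. Thus only the two remaining axioms $\Delta\circ\Delta=0$ and $\Delta(1)=0$ need to be verified.

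The identity $\Delta^2=0$ is immediate: since $\phi$ is an isomorphism and $\mathrm B^2=0$ (the square of the Connes operator vanishes on the mixed complex $(\mathrm{CH}^1_\bullet(A;A_\sigma),b,\mathrm B)$), one has $\Delta^2=\phi^{-1}\mathrm B\phi\phi^{-1}\mathrm B\phi=\phi^{-1}\mathrm B^2\phi=0$. For $\Delta(1)=0$ I would argue by degree: write $\omega:=\phi(1)$ for the volume form, which lies in the top piece $\mathrm{HH}_n(A;A_\sigma)$ because $1\in\mathrm{HH}^0(A)$ and $\phi$ sends cohomological degree $\bullet$ to homological degree $n-\bullet$. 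The operator $\mathrm B$ raises homological degree by one, so $\mathrm B(\omega)\in\mathrm{HH}_{n+1}(A;A_\sigma)$, and by the Brown-Zhang duality \eqref{isoofbz} this group is isomorphic to $\mathrm{HH}^{-1}(A)=0$. Hence $\mathrm B(\omega)=0$ and $\Delta(1)=\phi^{-1}(\mathrm B(\omega))=0$.

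I expect no genuine obstacle at this stage, as the substantive work is already packaged into the preceding differential-calculus-with-duality theorem (built on the results of Kowalzig-Krahmer together with the Brown-Zhang isomorphism) and into Lemma \ref{calwithdualityinducesBV}. The only point demanding a moment of care is $\Delta(1)=0$: a priori one might fear $\mathrm B(\omega)\neq 0$, but the boundedness of $\mathrm{HH}^\bullet(A)$ in degrees $0,\dots,n$ --- a consequence of homological smoothness, which forces the shifted group $\mathrm{HH}_{n+1}(A;A_\sigma)\cong\mathrm{HH}^{-1}(A)$ to vanish --- shows that the volume form is automatically $\mathrm B$-closed.
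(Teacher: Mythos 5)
Your proposal is correct and follows essentially the same route as the paper, which likewise obtains the result by combining the preceding differential-calculus-with-duality theorem with Lemma \ref{calwithdualityinducesBV}. Your additional verifications of $\Delta\circ\Delta=0$ and $\Delta(1)=0$ (via $\mathrm B^2=0$ and the degree argument showing $\mathrm B(\omega)=0$) are details the paper leaves implicit, and they are sound.
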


\section{Frobenius algebras}
In this section, we rephrase the construction
of the Batalin-Vilkovisky algebra on the Hochschild
cohomology of a Frobenius algebra with
semisimple Nakayama automorphism,
obtained by Lambre, Zhou and Zimmermann in \cite{LZZ}.

\begin{definition}
A graded associative $k$-algebra $A^!$ of finite dimension is called {\it Frobenius} of degree $n$ 
if there exists a nondegenerate bilinear pairing 
\begin{equation}\label{Frobeniuspairing}
\langle -,-\rangle : A^!\otimes A^!\rightarrow  k[n]
\end{equation}
such that 
$\langle ab,c\rangle=\langle a, bc\rangle$,
for all $a,b,c\in A^!$.
\end{definition}

By the nondegeneracy of the pairing, 
there exists an automorphism $\sigma\in \mathrm{Aut}(A^!)$ such that
$\langle ab,c\rangle=(-1)^{|c|(|a|+|b|)}\langle \sigma(c) a,b\rangle$.
Such $\sigma$ is also called Nakayama automorphism of $A^!$. The nondegenerate pairing given by \eqref{Frobeniuspairing}
is equivalent to saying that
$$
\psi: A^!\rightarrow A^{\ac}_\sigma[n],\quad a\mapsto\langle -,a\rangle
$$
is an isomorphism of $A^!$-bimodules.

Let $A^{\ac}=\mathrm{Hom}_k(A^!,k)$ be the linear dual space of $A^!$, which is a graded coalgebra. 
Nakayama automorphism $\sigma$ induces an automorphism $\sigma^*$ on $A^\ac$. 
Here $\sigma^*$ is the adjoint of 
$\sigma$. We have a left co-module structure on $A^{\ac}$:
$$\Delta_l(a)=\sum_{(a)}\sigma^*(a')\otimes a'',$$
for all $a\in A^{\ac}$. (The coproduct on $A^{\ac}$ is viewed as a right co-module structure of $A^{\ac}$,
and is denoted by $\Delta_r(a)=\sum_{(a)}a'\otimes a''$.)
To distinguish, let us denote this new co-bimodule structure 
on $A^{\ac}$ by $\;_{\sigma^*}\! A^{\ac}$.

Recall that in \S\ref{Subsect:diffcal2} we obtain
a differential calculus structure on $(\mathrm{HH}^\bullet(A^!; A^!), \mathrm{HH}_\bullet(A^!; A^{\ac}))$.
In the following we explore this structure in more detail, for $A^!$ being a Frobenius algebra.

\subsection{Hochschild homology of coalgebras}
Suppose $C$ is a coassociative (possibly graded) coalgebra with coproduct $\Delta: C\rightarrow C\otimes C$ given by
$$\Delta(c)=\sum_{(c)}c'\otimes c'':=c'\otimes c''.$$
Let $\Delta^0:=\mathrm{Id}$, $\Delta^1:=\Delta$ and let $\Delta^n:=(\Delta\otimes id\otimes\cdots \otimes id)\circ \Delta^{n-1}$ by recursion.
From the coassociativity of $\Delta$, we have $\Delta^n=(id\otimes\cdots\otimes\Delta\otimes\cdots\otimes id)\circ \Delta^{n-1}$.

\begin{definition}
Suppose $C$ is a coassociative (possibly graded) coalgebra and $M$ is a co-bimodule over $C$.
The Hochschild chain complex of $C$ with coefficients in $M$, denoted by $\mathrm{CH}_\bullet(C; M)$,
is the complex
$$
0\to M\to C\otimes M\to C^{\otimes 2}\otimes M\to\cdots\to
C^{\otimes n}\otimes M\to\cdots
$$
with the boundary $b$ given by
\begin{eqnarray*}
\delta^*(a_1,\cdots, a_n,m)&=&\sum_i(-1)^{|a_1|+\cdots+|a_{i-1}|+i-1+|a_i'|}(a_1,\cdots, a_i', a_i'',\cdots, a_n,m)\\
&+&\sum_{(m)}(-1)^{|a_1|+\cdots +|a_n|+n+|c'|}((a_1,\cdots  , a_n, c', m')+(-1)^{\epsilon}(c'',a_1,\cdots , a_n, m'')),
\end{eqnarray*}
where 
$$\Delta(m)=\sum_{(m)}c'\otimes m'+\sum_{(m)}m''\otimes c''\in C\otimes M\oplus M\otimes C$$
and
$\epsilon=(|c''|-1)(|a_1|+\cdots +|a_n|+n+|m''|)$ for any homogeneous elements $(a_1,\cdots,a_n,m)\in C^{\otimes n}\otimes M$.
The associated homology is called the Hochschild homology of $C$ with coefficients in $M$, and is denoted by
$\mathrm{HH}_\bullet(C; M)$.
\end{definition}

\begin{theorem}[\cite{LZZ}, Proposition 3.3]\label{PDforFrobenius}
Let $A^!$ be a Frobenius algebra of degree $n$ with Nakayama automorphism $\sigma$.
Then there is an isomorphism
$$\mathrm{PD}: \mathrm{HH}^\bullet(A^!;A^!)\cong \mathrm{HH}_{n-\bullet}(A^{\ac};\;_{\sigma^*}\! A^{\ac}).$$
\end{theorem}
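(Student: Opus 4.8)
The plan is to realize this duality as a \emph{linear-duality} statement, made possible by the finite dimensionality of $A^!$, with the coefficient module transported along the Frobenius isomorphism $\psi$. First I would exploit finiteness to rewrite the Hochschild cochain complex degreewise: for each $p$,
$$\mathrm{CH}^p(A^!;A^!)=\mathrm{Hom}_k\big((A^!)^{\otimes p},A^!\big)\cong (A^{\ac})^{\otimes p}\otimes A^!,$$
using the canonical identifications $\mathrm{Hom}_k(V,W)\cong V^*\otimes W$ and $\big((A^!)^{\otimes p}\big)^*\cong (A^{\ac})^{\otimes p}$. Under this identification the multiplication of $A^!$ dualizes to the comultiplication $\Delta=m^*$ of $A^{\ac}$, so that the terms of the Hochschild coboundary $\delta$ which merge two adjacent arguments become the terms of the coalgebra boundary $\delta^*$ which split a tensor factor via $\Delta$; indeed the boundary $\delta^*$ of $\mathrm{CH}_\bullet(A^{\ac};M)$ is by design the linear dual of the algebra-side differential, which is what makes this matching possible.

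Next I would transport the coefficient bimodule. The Frobenius pairing supplies the isomorphism of $A^!$-bimodules $\psi\colon A^!\xrightarrow{\ \sim\ }A^{\ac}_{\sigma}[n]$, $a\mapsto\langle-,a\rangle$. Applying $\psi$ in the coefficient slot replaces $A^!$ by $A^{\ac}$, shifted internally by $n$, and converts the left and right $A^!$-actions on the coefficient into the corresponding coactions on $A^{\ac}$. The one subtlety is that the left action does \emph{not} go over to the untwisted left coaction: the Frobenius relation $\langle ab,c\rangle=(-1)^{|c|(|a|+|b|)}\langle \sigma(c)a,b\rangle$ forces a twist by $\sigma^*$ on the left, which is precisely the co-bimodule ${}_{\sigma^*}\!A^{\ac}$ introduced above. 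Thus the two outer terms of $\delta$ (those using the bimodule action of $A^!$ on its own coefficient copy) match the two terms of $\delta^*$ arising from the coaction $\Delta(m)=\sum c'\otimes m'+\sum m''\otimes c''$, with the left coaction twisted by $\sigma^*$ exactly as needed.

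Combining the two identifications yields an isomorphism of complexes between $\mathrm{CH}^\bullet(A^!;A^!)$ and $\mathrm{CH}_\bullet(A^{\ac};{}_{\sigma^*}\!A^{\ac})$, up to the internal degree shift $[n]$ coming from $\psi$; this shift is what reindexes the duality as $\bullet\mapsto n-\bullet$ and produces the map $\mathrm{PD}$. Passing to (co)homology then gives the claimed isomorphism, and it is here that the hypothesis that the Frobenius pairing has degree $n$ is used.

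The main obstacle I anticipate is bookkeeping rather than conceptual. One must verify that under the two dualizations \emph{every} term of $\delta$ is carried to the correct term of $\delta^*$ with the correct Koszul sign, and in particular that the twist introduced by $\psi$ lands on the left and equals $\sigma^*$ (rather than $\sigma$, or a twist on the right); tracing this hinges on using the Frobenius relation in exactly the right place. Getting the grading conventions precisely right — so that the internal shift $[n]$ genuinely converts the index $\bullet$ into $n-\bullet$ — is the other delicate point, and I would treat it as the final step to be checked carefully rather than a formal consequence.
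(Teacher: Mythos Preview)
Your proposal is correct and follows essentially the same route as the paper. The paper applies $\psi$ first and then dualizes, writing the chain of isomorphisms as $\mathrm{Hom}(\mathrm{B}A^!,A^!)\cong \mathrm{Hom}(\mathrm{B}A^!,A^{\ac}_{\sigma}[n])\cong \Omega(A^{\ac})\otimes A^{\ac}_{\sigma}[n]\cong \Omega(A^{\ac})\otimes{}_{\sigma^*}\!A^{\ac}[n]$, whereas you dualize degreewise first and then transport the coefficient via $\psi$; but these are the same identification in a different order, with the bar/cobar language in the paper replacing your explicit $(A^{\ac})^{\otimes p}\otimes A^!$.
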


\begin{proof}
Given a Frobenius algebra $A^!$ with Nakayama automorphism $\sigma$, 
we have $\psi: A^!\cong A^\ac_\sigma[n]$ as $A^!$-bimodules. 
The $\psi$ is given by $\psi(a)=\langle -,a\rangle$. Therefore we have 
$$\mathrm{Hom}(\mathrm{B}A^!,A^!)\cong \mathrm{Hom}(\mathrm{B}A^!,
A^\ac_{\sigma}[n])\cong \Omega(A^\ac)\otimes A^\ac_{\sigma}[n]\cong \Omega(A^{\ac})\otimes \;_{\sigma^*}\!A^\ac[n].$$
The isomorphisms above are all compatible with boundary maps, and hence we obtain
$$\mathrm{PD}: \mathrm{HH}^\bullet(A^!;A^!)\cong \mathrm{HH}_{n-\bullet}(A^{\ac};\;_{\sigma^*}\! A^{\ac}),$$
which completes the proof.
\end{proof}

\subsection{Frobenius algebra with semisimple Nakayama automorphism}

In this subsection, we go over the Batalin-Vilkovisky structure on the Hochschild cohomology of a Frobenius algebra with semisimple Nakayama automropshim.

Firstly, we consider the Hochschild chain complex $\mathrm{CH}_\bullet(A^{\ac};\;_{\sigma^*}\!A^{\ac})$ of $A^\ac$ with coefficients in $\;_{\sigma^*}\!A^\ac$. Similar to the AS-regular case, there is a decomposition on the chain complex of a Frobenius coalgebra $A^{\ac}$.

Since $\sigma$ is semisimple, there is a decomposition of $\mathrm{CH}_\bullet(A^{\ac};\;_{\sigma^*}\!A^{\ac})$ as follows. Let $\Lambda$ be the set of eigenvalues of $\sigma^*$ and $A^{\ac}_{\lambda}$ be the eigenvalue space corresponding
to $\lambda\in\Lambda$. Let
$$\mathrm{CH}^\lambda_\bullet(A^{\ac};\;_{\sigma^*}\!A^{\ac})
:=\bigoplus_{i}\bigoplus_{\prod_{j=1}^i\lambda_{i_j}=\lambda}
A^{\ac}_{\lambda_{i_1}}\otimes\cdots\otimes A^{\ac}_{\lambda_{i_i}},\quad \lambda_{i_j}\in\Lambda.$$
The restriction of $d$ makes $\mathrm{CH}_\bullet^{\lambda}(A^{\ac};\;_{\sigma^*}\!A^{\ac})$
 to be a subcomplex of $\mathrm{CH}_\bullet(A^{\ac};\;_{\sigma^*}\! A^{\ac})$ and we denote its homology $\mathrm{H}_\bullet(\mathrm{CH}^\lambda_\bullet(A^{\ac};\;_{\sigma^*}\! A^{\ac}))$ by $\mathrm{HH}^\lambda_\bullet(A^{\ac};\;_{\sigma^*}\! A^{\ac})$.

Secondly, we define a map
$$\mathrm{B}:\mathrm{CH}_n(A^{\ac};\;_{\sigma^*}\! A^{\ac})\rightarrow \mathrm{CH}_{n-1}(A^{\ac};\;_{\sigma^*}\! A^{\ac}),$$
given by
$$(a_1,\cdots, a_n, a_0)\mapsto \sum_i(-1)^{i(n-i)}\epsilon(a_0)(a_{i+1},\cdots, a_n, \sigma^*(a_1),\cdots, \sigma^*(a_{i-1}), a_i),$$
where $\epsilon(a_0)$ is the imagine of the counit map $\epsilon: A^\ac\rightarrow k$.
The restriction map $\mathrm B$ on the subcomplex $\mathrm{CH}^1_\bullet(A^{\ac};\;_\sigma\! A^{\ac})$ 
 is exactly the Connes operator. Hence $(\mathrm{CH}^1_\bullet(A^{\ac};\;_{\sigma^*}\! A^{\ac}), b,\mathrm{B})$ is a mixed complex. 

Let
$$\mathrm{T}:\mathrm{CH}_n(A^{\ac};\;_{\sigma^*}\! A^{\ac})\rightarrow \mathrm{CH}_n(A^{\ac};\;_{\sigma^*}\! A^{\ac})$$
be
$$(a_1,\cdots, a_n, a_0)\mapsto (\sigma^*(a_1),\cdots, \sigma^*(a_n),\sigma^*(a_0)).$$
Then we have the following

\begin{lemma}[\cite{LZZ}, Proposition 2.1, \cite{KK2},(2.19)]
On the space $\mathrm{CH}_\bullet(A^{\ac};\;_{\sigma^*}\!A^{\ac})$, there exists the identity
$$b\circ \mathrm{B}+\mathrm{B}\circ b=\mathrm{Id}-\mathrm{T}.$$
\end{lemma}

The above lemma implies that 
$$\mathrm{HH}_\bullet(A^{\ac};\;_{\sigma^*}\! A^{\ac})\cong \mathrm{HH}^1_\bullet(A^{\ac};\;_{\sigma^*}\! A^{\ac}).$$
Similarly, there is a decomposition of the Hochschild cochain complex $\mathrm{CH}^\bullet(A^!;A^!)$. 
Let
$$\mathrm{CH}_{\mu}^n(A^!;A^!):=\{f\in \mathrm{CH}^n(A^!;A^!)|
f(\bar A^!_{\mu_1}\otimes\cdots\otimes \bar A^!_{\mu_m})\subset A^!_{\mu\mu_1\cdots\mu_m}\}.$$
The restriction coboundary $b$ 
makes $\mathrm{CH}^\bullet_{\mu}(A^!;A^!)$ into be a subcomplex of 
$\mathrm{CH}^\bullet(A^!;A^!)$ and we denote its homology $\mathrm{H}^\bullet(\mathrm{CH}^\bullet_{\mu}(A^!;A^!))$ 
by $\mathrm{HH}^\bullet_{\mu}(A^!;A^!)$. Similarly to \eqref{equivsubcochaincomplex1}
we have
\begin{equation}\label{equivsubcochaincomplex2}
\mathrm{HH}^\bullet(A^!;A^!)\cong\mathrm{HH}_1^\bullet(A^!;A^!).
\end{equation}

It is direct to check that the Gerstenhaber cup product, bracket and cap product restrict to the following maps:
for $\lambda,\mu\in \Lambda$, 
$$
\begin{array}{rccl}\cup:&\mathrm{CH}^p_{\lambda}(A^!;A^!)\otimes \mathrm{CH}^q_{\mu}(A^!;A^!)&
\rightarrow &\mathrm{CH}^{p+q}_{\lambda\mu}(A^!;A^!),\\
\{-,-\}:& \mathrm{CH}^p_{\lambda}(A^!;A^!)\otimes \mathrm{CH}^q_{\mu}(A^!;A^!)&
\rightarrow &\mathrm{CH}^{p+q-1}_{\lambda\mu}(A^!;A^!),\\
\cap:&\mathrm{CH}^{\lambda}_{p}(A^{\ac};\;_{\sigma^*}\!A^{\ac})\otimes \mathrm{CH}^q_{\mu}(A^!;A^!)
&\rightarrow& \mathrm{CH}_{p-q}^{\lambda\mu}(A^{\ac};\;_{\sigma^*}\!A^{\ac}).
\end{array}
$$

Consider the case of eigenvalue $\lambda=\mu=1$; we have the following theorem.

\begin{theorem}[\cite{LZZ}, Theorem 2.3]
Let $\cup_1$, $\cap_1$ and $\{-,-\}_1$ be the restrictions of the cup product, cap product and 
Gerstenhaber bracket to the homology and cohomology spaces associated with the eigenvalue $\lambda=1$. 
Then the Connes operator $\mathrm{B}$ gives on
\begin{equation}\label{diffcalonsub2}
(\mathrm{HH}^\bullet_1(A^!;A^!),\cup_1, \{-,-\}_1,\mathrm{HH}_\bullet^1(A^{\ac};\;_{\sigma^*}\!A^{\ac}), \mathrm{B}, \cap_1)
\end{equation}
a differential calculus structure.
\end{theorem}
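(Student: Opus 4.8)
The plan is to verify the three axioms of a differential calculus for the sextuple \eqref{diffcalonsub2} by reducing each to a structure already in hand: the Gerstenhaber algebra on Hochschild cohomology, the purely formal dualization identities established in the proof of the theorem of \S\ref{Subsect:diffcal2}, and the mixed-complex identity $b\mathrm B+\mathrm Bb=\mathrm{Id}-\mathrm T$. The role of the Frobenius hypothesis (semisimplicity of $\sigma$) is confined to the last ingredient, which is precisely what upgrades $\mathrm B$ to a genuine Connes operator after passing to the eigenvalue-$1$ part.

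I would first dispose of axiom~(1). By the Daletskii-Gelfand-Tsygan theorem \cite{DGT} the triple $(\mathrm{HH}^\bullet(A^!;A^!),\cup,\{-,-\})$ is a Gerstenhaber algebra; the displayed restriction maps show that $\cup$ and $\{-,-\}$ preserve the $\mu=1$ components, and \eqref{equivsubcochaincomplex2} identifies the full cohomology with its $\mu=1$ part. Hence $(\mathrm{HH}^\bullet_1(A^!;A^!),\cup_1,\{-,-\}_1)$ is a Gerstenhaber algebra isomorphic to the whole one.

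The heart of the argument is the module axiom~(2) and the Cartan identities~(3). Since $A^!$ is finite dimensional, the coalgebra Hochschild chains identify canonically with a linear dual,
\[
\mathrm{CH}_n(A^{\ac};{}_{\sigma^*}\!A^{\ac})\;\cong\;(A^{\ac})^{\otimes(n+1)}\;\cong\;\mathrm{Hom}_k\bigl((A^!)^{\otimes(n+1)},k\bigr),
\]
and I would check that under this identification the coalgebra differential, the cap product $\cap$, and the Connes operator $\mathrm B$ are exactly the $k$-linear adjoints of the differential $b$, the cap action, and the twisted Connes operator on the twisted algebra chains $\mathrm{CH}_\bullet(A^!;A^!_\sigma)$, the comodule twist ${}_{\sigma^*}$ on the coefficients dualizing to the module twist of $A^!_\sigma$. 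Granting this dictionary, the module identity $(g\cup f)\cap\alpha=g\cap(f\cap\alpha)$ and the two Cartan-type identities $\mathrm L_{\{f,g\}}=[\mathrm L_f,\mathrm L_g]$ and $(-1)^{|f|+1}\{f,g\}\cap\alpha=\mathrm L_f(g\cap\alpha)-(-1)^{|g|(|f|+1)}g\cap\mathrm L_f(\alpha)$ are obtained \emph{verbatim} from the dualization computation in the proof of the theorem of \S\ref{Subsect:diffcal2}, now carrying the extra $\sigma^*$; the restriction map for $\cap$ keeps everything inside the $\lambda=\mu=1$ components.

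It remains to see that $\mathrm B$ is a genuine square-zero Connes operator on homology. On the full complex it is not, and this is where the eigenvalue-$1$ restriction enters: on $\mathrm{CH}^1_\bullet(A^{\ac};{}_{\sigma^*}\!A^{\ac})$ one has $\mathrm T=\mathrm{Id}$, so the lemma $b\mathrm B+\mathrm Bb=\mathrm{Id}-\mathrm T$ yields $b\mathrm B+\mathrm Bb=0$ and $\mathrm B^2=0$, making $(\mathrm{CH}^1_\bullet,b,\mathrm B)$ a mixed complex; the same lemma shows the inclusion of this subcomplex is a quasi-isomorphism, so $\mathrm{HH}^1_\bullet\cong\mathrm{HH}_\bullet$ and $\mathrm B$ descends. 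Assembling (1)--(3) on the $\lambda=\mu=1$ data then yields the asserted differential calculus. The main obstacle is the chain-level matching in the preceding paragraph: pinning down, with all Koszul signs and the $\sigma^*$-insertions present in the coalgebra differential and in the formula for $\mathrm B$, that $\cap$ and $\mathrm B$ are really the adjoints of their twisted algebra counterparts. Once this dictionary is secured the differential-calculus identities come for free by duality; without it one is forced to re-derive every Cartan identity directly on the sign-heavy coalgebra complex.
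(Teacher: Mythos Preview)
Your proposal is sound, and the key idea---identifying $\mathrm{CH}_\bullet(A^{\ac};{}_{\sigma^*}\!A^{\ac})$ as the linear dual of the twisted algebra chains $\mathrm{CH}_\bullet(A^!;A^!_\sigma)$ and then transporting the Cartan identities by the formal dualization of \S\ref{Subsect:diffcal2}---is a legitimate route. Note, however, that the paper does not give its own proof of this statement: it is quoted from \cite{LZZ}, and the surrounding text only assembles the ingredients (the eigenspace decomposition, the operator $\mathrm B$ on the coalgebra complex, and the identity $b\mathrm B+\mathrm Bb=\mathrm{Id}-\mathrm T$). The intended argument, following \cite{LZZ}, verifies the differential-calculus axioms \emph{directly} on the coalgebra Hochschild complex $\mathrm{CH}_\bullet(A^{\ac};{}_{\sigma^*}\!A^{\ac})$ restricted to eigenvalue $1$, rather than by dualizing from an algebra-side calculus.

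Your approach is therefore genuinely different: you are effectively deducing the Lambre--Zhou--Zimmermann calculus from the Kowalzig--Kr\"ahmer calculus applied to $A^!$ (their Theorem on \eqref{diffcalonsub1} is stated for $A$, but the chain-level argument of \cite{KK1,KK2} requires only an algebra with a semisimple automorphism, not AS-regularity). This buys you economy---the Cartan identities come for free once the dictionary is secured---at the price of importing the Kowalzig--Kr\"ahmer machinery for a second algebra. The direct approach of \cite{LZZ} is more self-contained but repeats chain-level computations that are formally dual to those already done. One point to be careful about in your dictionary: the dualization of \S\ref{Subsect:diffcal2} was carried out with \emph{untwisted} coefficients $A^{\ac}$, and the identities there ultimately rest on the Daletskii--Gelfand--Tsygan calculus for $\mathrm{HH}_\bullet(A^!;A^!)$; in the twisted setting you must instead invoke the calculus on $\mathrm{HH}^1_\bullet(A^!;A^!_\sigma)$, so your argument really does lean on \cite{KK1,KK2} for $A^!$ and is not a pure consequence of \S\ref{Subsect:diffcal2} alone. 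You acknowledge this implicitly, but it is worth making explicit.
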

Together with Theorem \ref{PDforFrobenius}, we obtain the following.
\begin{theorem}[\cite{LZZ}, Theorem 2.3 and Proposition 3.4]
Suppose $A^!$ is a Frobenius algebra of dimension $d$
with semisimple Nakayama automorphism. Then
$$(\mathrm{HH}^\bullet(A^!;A^!),\cup, \{-,-\},\mathrm{HH}_\bullet(A^{\ac};\;_{\sigma^*}\!A^{\ac}),\mathrm{B}, \cap)$$
forms a differential calculus with duality.
\end{theorem}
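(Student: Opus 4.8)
The plan is to build the claimed differential calculus with duality out of the two structures already established above: the differential calculus \eqref{diffcalonsub2} living on the eigenvalue-$1$ pieces, and the Poincar\'e duality isomorphism of Theorem \ref{PDforFrobenius}. First I would promote the eigenvalue-$1$ differential calculus to the full Hochschild (co)homology. The isomorphism \eqref{equivsubcochaincomplex2} identifies $\mathrm{HH}^\bullet(A^!;A^!)$ with $\mathrm{HH}_1^\bullet(A^!;A^!)$, and the Lemma immediately preceding \eqref{equivsubcochaincomplex2} yields $\mathrm{HH}_\bullet(A^{\ac};\;_{\sigma^*}\! A^{\ac})\cong \mathrm{HH}^1_\bullet(A^{\ac};\;_{\sigma^*}\! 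A^{\ac})$. Since the cup product, bracket and cap product each preserve the eigenvalue decomposition (as recorded in the three displayed restriction maps just above the eigenvalue-$1$ theorem), transporting $\cup_1$, $\{-,-\}_1$, $\cap_1$ and $\mathrm{B}$ along these isomorphisms equips the full sextuple $(\mathrm{HH}^\bullet(A^!;A^!),\cup,\{-,-\},\mathrm{HH}_\bullet(A^{\ac};\;_{\sigma^*}\!A^{\ac}),\mathrm{B},\cap)$ with a differential calculus structure. This step is a purely formal transport of structure and carries no essential difficulty.

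Next I would supply the duality isomorphism $\phi$ demanded by Lambre's definition. The natural candidate is the Poincar\'e duality $\mathrm{PD}\colon \mathrm{HH}^\bullet(A^!;A^!)\cong \mathrm{HH}_{n-\bullet}(A^{\ac};\;_{\sigma^*}\! A^{\ac})$ of Theorem \ref{PDforFrobenius}, with the relevant integer being the degree $n$ of the Frobenius structure. Recall that $\mathrm{PD}$ was built by inserting the Frobenius bimodule isomorphism $\psi\colon A^!\cong A^\ac_\sigma[n]$ inside $\mathrm{Hom}(\mathrm{B}A^!,-)$ and then passing to the cobar complex, giving the chain-level identifications $\mathrm{Hom}(\mathrm{B}A^!,A^!)\cong\Omega(A^\ac)\otimes\;_{\sigma^*}\!A^\ac[n]$. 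By Lambre's definition it remains only to verify that $\mathrm{PD}$ is a morphism of $\mathrm{HH}^\bullet(A^!;A^!)$-modules, that is, $\mathrm{PD}(f\cup g)=f\cap \mathrm{PD}(g)$ for all cohomology classes $f,g$ (the normalization used in the proof of Lemma \ref{calwithdualityinducesBV}, where $\phi(\alpha)=\alpha\cap\phi(1)$).

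This module compatibility is the crux of the argument and the step I expect to be the main obstacle. My strategy is to check it at the chain level. Because $\psi$ is an isomorphism of $A^!$-bimodules, the left and right multiplications used to form the cup product $f\cup g$ on $\mathrm{Hom}(\mathrm{B}A^!,A^!)$ are carried by $\psi$ precisely to the coactions governing the cap action of $f$ on the image of $g$ in $\Omega(A^\ac)\otimes\;_{\sigma^*}\!A^\ac[n]$. Concretely, one unwinds the explicit formulas for $\cup$ and $\cap$ and uses the defining adjunction $\langle ab,c\rangle=\langle a,bc\rangle$ of the Frobenius form (together with the relation $\langle ab,c\rangle=(-1)^{|c|(|a|+|b|)}\langle\sigma(c)a,b\rangle$ that produced $\sigma^*$) to match the two expressions term by term, tracking the Koszul signs carefully. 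Once this chain-level identity is in hand, passing to homology and composing with the formal transport of the first paragraph shows that $\mathrm{PD}$ is an isomorphism of $\mathrm{HH}^\bullet(A^!;A^!)$-modules, so that \eqref{diffcalonsub2} together with $\mathrm{PD}$ constitutes a differential calculus with duality. By Lemma \ref{calwithdualityinducesBV} this is exactly what endows $\mathrm{HH}^\bullet(A^!;A^!)$ with its Batalin--Vilkovisky operator, although that consequence lies beyond the present statement.
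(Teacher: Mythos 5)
Your proposal is correct and follows essentially the same route as the paper, which obtains this theorem simply by combining the eigenvalue-$1$ differential calculus of the preceding theorem with the Poincar\'e duality isomorphism of Theorem \ref{PDforFrobenius} (deferring details, including the $\mathrm{HH}^\bullet(A^!;A^!)$-module compatibility of $\mathrm{PD}$ that you rightly single out as the crux, to \cite{LZZ}). Your write-up merely makes explicit the transport of structure and the chain-level verification via the Frobenius bimodule isomorphism $\psi$ that the paper leaves to the cited reference.
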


Combining the above theorem with Lemma \ref{calwithdualityinducesBV}, we obtain:

\begin{theorem}[\cite{LZZ}, Theorem 4.1]
If $A^!$ is a Frobenius algebra with semisimple Nakayama automorphism, then the Hochschild cohomology $\mathrm{HH}^\bullet(A^!; A^!)$ of $A$ is a Batalin-Vilkovisky algebra.
\end{theorem}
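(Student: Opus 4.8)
The plan is to read off the Batalin–Vilkovisky structure as a formal consequence of the differential-calculus-with-duality established in the preceding theorem, using Lambre's Lemma \ref{calwithdualityinducesBV}. Concretely, I would take the duality isomorphism to be the Poincar\'e duality map
$$\mathrm{PD}\colon \mathrm{HH}^\bullet(A^!;A^!) \xrightarrow{\ \sim\ } \mathrm{HH}_{n-\bullet}(A^{\ac};\;_{\sigma^*}\!A^{\ac})$$
of Theorem \ref{PDforFrobenius}, and set $\Delta := \mathrm{PD}^{-1}\circ \mathrm{B}\circ \mathrm{PD}$, where $\mathrm{B}$ is the Connes operator available on the eigenvalue-$1$ mixed subcomplex $\mathrm{CH}^1_\bullet(A^{\ac};\;_{\sigma^*}\!A^{\ac})$.

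Since the preceding theorem asserts that the sextuple $(\mathrm{HH}^\bullet(A^!;A^!),\cup,\{-,-\},\mathrm{HH}_\bullet(A^{\ac};\;_{\sigma^*}\!A^{\ac}),\mathrm{B},\cap)$ is a differential calculus with duality, Lemma \ref{calwithdualityinducesBV} applies verbatim and produces the defining identity
$$\{a,b\}=(-1)^{|a|+1}\big(\Delta(a\cup b)-\Delta(a)\cup b-(-1)^{|a|}a\cup\Delta(b)\big).$$
It then remains to verify the two remaining axioms of a Batalin–Vilkovisky algebra. For $\Delta\circ\Delta=0$ I would simply compute $\Delta^2=\mathrm{PD}^{-1}\circ\mathrm{B}^2\circ\mathrm{PD}$ and invoke $\mathrm{B}^2=0$, which holds because $(\mathrm{CH}^1_\bullet(A^{\ac};\;_{\sigma^*}\!A^{\ac}),b,\mathrm{B})$ is a mixed complex. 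For the normalization $\Delta(1)=0$, the degree $-1$ nature of $\Delta$ forces $\Delta(1)\in\mathrm{HH}^{-1}(A^!;A^!)=0$; equivalently, writing $\omega:=\mathrm{PD}(1)$ for the volume form, one checks from the explicit Connes formula that $\mathrm{B}$ annihilates $\omega$.

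I do not expect genuine difficulty in this final assembly: the substance lies upstream, in the preceding theorem, where one must show that $\mathrm{PD}$ is an isomorphism of $\mathrm{HH}^\bullet(A^!;A^!)$-modules intertwining the cup action on cohomology with the cap action on the coalgebra homology. That ``with duality'' clause is the real obstacle, and it is exactly the point at which semisimplicity of the Nakayama automorphism $\sigma$ is used, via the passage to the eigenvalue-$1$ subcomplexes $\mathrm{HH}^\bullet_1$ and $\mathrm{HH}_\bullet^1$ on which the Connes operator is defined (the analogue of \eqref{equivsubchaincomplex1} and \eqref{equivsubcochaincomplex2}). Granting that, the present statement is an immediate corollary.
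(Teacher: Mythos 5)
Your proposal is correct and follows essentially the same route as the paper: the theorem is obtained by combining the preceding differential-calculus-with-duality statement for $(\mathrm{HH}^\bullet(A^!;A^!),\mathrm{HH}_\bullet(A^{\ac};\;_{\sigma^*}\!A^{\ac}))$ with Lemma \ref{calwithdualityinducesBV}, setting $\Delta=\mathrm{PD}^{-1}\circ\mathrm{B}\circ\mathrm{PD}$. Your additional checks of $\Delta\circ\Delta=0$ and $\Delta(1)=0$, which the paper leaves implicit, are correct and harmless.
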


\section{Koszul duality of AS-regular algebras}

In this section, we study Koszul AS-regular algebras, and then relate the two differential calculus structures in 
previous two sections by means of Koszul duality. We begin with Koszul algebras, which was introduced by Priddy in \cite{Priddy}.

Assume $V$ is a $k$-vector space generated by a basis $\{x_i\}_{i=1}^n$ of degree $1$. 
The free algebra generated by $V$ is denoted by $\mathrm T(V)$. Let $R\subset V\otimes V$ be a subspace, and
let $(R)$ be the bi-sided ideal generated by $R$ in $\mathrm T(V)$. 
The quotient algebra $A=\mathrm T(V)/(R)$ is called a {\it quadratic algebra}. 

\begin{definition}
Given a quadratic algebra $A=\mathrm T(V)/(R)$, the linear dual of $V$ is denoted by 
$V^*:=\mathrm{Hom}_k(V,k)$ and let $R^{\perp}:=\{f\in V^*\otimes V^*, f(r)=0, \forall r\in R \}$. 
Then $A^!:=\mathrm T(V^*)/(R^{\perp})$ is called the {\it quadratic dual} of $A$. 
\end{definition}
 
Let $A^\ac:=\mathrm{Hom}_k(A^!,k)$, then 
$$A^\ac_n\cong\bigcap^n_{i+j+2=n}(sV)^{\otimes i}\otimes (s^2R)\otimes (sV)^{\otimes j},$$
which is a coalgebra. Its coproduct is the restriction of the coproduct $\Delta$ on the co-free coalgebra 
$\mathrm{T}^c(sV)$ given by
$$\Delta(a_1, \cdots, a_n)=\sum_{i=0}^n(-1)^{i}(a_1, \cdots, a_i)\otimes (a_{i+1},\cdots, a_n).$$
Here the summand for
$i=0$ is $1\otimes (a_1,\cdots, a_n)$ and the summand for $i=n$ is $(a_1,\cdots, a_n)\otimes 1$.

The {\it Koszul complex} associated to $A$ is the complex
\begin{equation}\label{Koszulcomplex}
0\rightarrow A\otimes A^\ac_n
\xrightarrow{b} A\otimes A^\ac_{n-1}\rightarrow \cdots\rightarrow A\otimes A^\ac_1\xrightarrow{b} A\rightarrow k\rightarrow 0,
\end{equation}
with the differential $b$ given by 
$$b(a\otimes f)=\sum_i ax_i\otimes f x_i^*,$$
where $\{x_i^*\}_{i=1}^n$ is the dual basis of $V$ in $V^*$.
It is direct to check $b^2=0$.

\begin{definition}
A quadratic algebra $A$ is called {\it Koszul} if the complex \eqref{Koszulcomplex} is exact. 
In this case, $A^!$ is 
called the {\it Koszul dual algebra} of $A$, and
$A^\ac$ is called the {\it Koszul dual coalgebra} of $A$.
\end{definition}

One of the advantages of Koszul algebras is that $A$ has a much smaller free resolution,
which is described as follows. 
Recall that the cobar construction $\Omega(A^\ac)$ of $A^\ac$ is the free tensor differential graded algebra generated by 
$s^{-1}\bar A^\ac$ with the differential $d$ given by
\begin{eqnarray*}
&&d_{\Omega(A^\ac)}(s^{-1}a_1,\cdots, s^{-1}a_n)\\
&=&\sum_{i=1}^n(-1)^{|a_1|+\cdots+|a_{i-1}|+i-1+|a_i'|}(s^{-1}a_1,\cdots,s^{-1}a_{i-1},s^{-1}a_i',s^{-1}a_i'',s^{-1}a_{i+1},\cdots, s^{-1}a_n)
\end{eqnarray*}
for any $a_i\in \bar A^\ac$, $i=1,\cdots, n$.

Consider the composition of the following maps
$$(A^\ac)^{\otimes n}\xrightarrow{(p)^{\otimes n}}V^{\otimes n}\longrightarrow A_n,$$
where $p: A^\ac\rightarrow A^\ac_1=V$ is the projection map and $V^{\otimes n}\rightarrow A_n$ is the natural surjective map.
The composition map $q$ is denoted by
$$q:\Omega(A^\ac)\rightarrow A.$$
For any $a_k\in A^\ac$, $k=1,\cdots, n$, let
$$q(a_1,\cdots, a_n)=\bar a_1\cdots \bar a_n,$$
where $\bar a_i$ is the image of the projection $p$.  

\begin{proposition}[{\it c.f.} \cite{LV} Theorem 3.4.4]
Suppose $A$ is a Koszul algebra. Then 
$$q:\Omega(A^\ac)\rightarrow A$$
is an quasi-isomorphism.
\end{proposition}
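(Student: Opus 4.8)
The plan is to recognize $q$ as the differential graded algebra morphism induced by the canonical twisting morphism, and then to deduce that it is a quasi-isomorphism from the acyclicity of the Koszul complex \eqref{Koszulcomplex} by comparing two twisted tensor products. First I would make the source of $q$ precise. Let $\kappa\colon A^\ac\to A$ be the twisting morphism obtained by composing the projection $p\colon A^\ac\to A^\ac_1=V$ with the inclusion $V\hookrightarrow A$. By the universal property of the cobar construction, $\kappa$ corresponds to a unique morphism of differential graded algebras $g_\kappa\colon \Omega(A^\ac)\to A$, and unwinding the formula for $g_\kappa$ on $(s^{-1}a_1,\cdots,s^{-1}a_n)$ shows that $g_\kappa=q$. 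Thus it suffices to prove that $g_\kappa$ is a quasi-isomorphism, and since both $\Omega(A^\ac)$ and $A$ are connected weight-graded and $q$ preserves the weight, everything below may be carried out weight by weight, where all complexes are finite dimensional.

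Next I would introduce the two relevant twisted tensor products. The right twisted tensor product $A\otimes_\kappa A^\ac$ has underlying space $A\otimes A^\ac$, and, because the internal differentials of $A$ and of $A^\ac$ vanish, its differential reduces to the twisting term, which is exactly the map $b$ of \eqref{Koszulcomplex}; hence $A\otimes_\kappa A^\ac$ coincides with the Koszul complex and is acyclic precisely by the Koszul hypothesis. On the other hand, for the universal twisting morphism $\iota\colon A^\ac\to\Omega(A^\ac)$ (the desuspension $a\mapsto s^{-1}\bar a$), the twisted tensor product $\Omega(A^\ac)\otimes_\iota A^\ac$ is acyclic for every conilpotent coalgebra (the acyclicity of the cobar construction, \cite{LV}). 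Because $\kappa=g_\kappa\circ\iota$, the map $g_\kappa\otimes\mathrm{id}_{A^\ac}\colon \Omega(A^\ac)\otimes_\iota A^\ac\to A\otimes_\kappa A^\ac$ is a morphism of twisted tensor products, and as both sides are acyclic (quasi-isomorphic to $k$) it is a quasi-isomorphism.

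Finally I would invoke the comparison lemma for twisted tensor products underlying \cite{LV}, Theorem 3.4.4: for a morphism of twisting morphisms, if two of the three maps $\{f,\,g,\,f\otimes g\}$ are quasi-isomorphisms, then so is the third. Applying this with $g=\mathrm{id}_{A^\ac}$ and $f=g_\kappa$, and using that $g_\kappa\otimes\mathrm{id}_{A^\ac}$ is a quasi-isomorphism by the previous step, yields that $g_\kappa=q$ is a quasi-isomorphism, as desired.

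The hard part is the comparison lemma itself: one must equip both twisted tensor products with the exhaustive, bounded-below filtration by weight so that the twisting term strictly drops the filtration, identify the associated graded pages, and verify that acyclicity of the $A^\ac$-factor transfers quasi-isomorphy from $g_\kappa\otimes\mathrm{id}$ back to $g_\kappa$. The only other point requiring care is the suspension and sign bookkeeping needed to match the abstract twisting differential with the explicit cobar differential and with the explicit $b$ of \eqref{Koszulcomplex}; once these identifications are in place the argument is formal, and an alternative, more hands-on route would be to run the same weight filtration directly on $\Omega(A^\ac)$ and compare its spectral sequence with \eqref{Koszulcomplex}.
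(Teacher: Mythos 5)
Your argument is correct and is essentially the paper's: the paper offers no proof of its own but simply cites \cite{LV}, Theorem 3.4.4, and your reconstruction via the canonical twisting morphism $\kappa$, the identification of $A\otimes_\kappa A^{\ac}$ with the Koszul complex, the acyclicity of $\Omega(A^{\ac})\otimes_\iota A^{\ac}$, and the comparison lemma for twisted tensor products is precisely the proof given there. The only points needing care are the ones you already flag, namely the weight-graded connectedness hypotheses for the comparison lemma and the sign/suspension bookkeeping identifying the twisting differential with the $b$ of \eqref{Koszulcomplex}.
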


Similarly, recall that $A^\ac_m$ is a subset of $V^{\otimes m}$. 
Let $q':A^\ac_m \rightarrow  \bar A^{\otimes m}$ 
be the restriction map of the natural inclusion $V^{\otimes m}\rightarrow\bar A^{\otimes m}$,
which extends to be a differential graded(DG) coalgebra map $\tilde q: 
A^{\ac}\to \mathrm B(A)$. Then $\tilde q$ is also a quasi-isomorphism.

\subsection{Homology of Koszul algebras with algebraic automorphisms}
Suppose $A$ is a Koszul algebra of global dimension $n$. Let $\sigma$ be an algebra automorphism 
of $A$ preserving the grading.
Since $A_1=V$, we have $\sigma(V)=V$. Extending $\sigma$ to be an algebra map on $\mathrm T(V)$, we thus have $\sigma(R)=R$.
This also means $\sigma$, by restriction on $A^{\ac}$, is an automorphism of vector spaces.

\begin{lemma}
$\sigma$ is a coalgebra automorphism of $A^\ac$.
\end{lemma}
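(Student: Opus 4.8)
The plan is to realize the action of $\sigma$ on $A^\ac$ as the restriction of a factor-wise automorphism of the ambient tensor coalgebra $\mathrm{T}^c(sV)$, where the coproduct-compatibility is transparent, and then to check that this automorphism stabilizes the subspace $A^\ac$. First I would record that, since $\sigma$ preserves the grading and $A_1=V$, it restricts to a linear automorphism of $V$, hence of $sV$; applying it slot-by-slot yields a graded linear automorphism $\Phi:=\bigoplus_n \sigma^{\otimes n}$ of $\mathrm{T}^c(sV)=\bigoplus_n (sV)^{\otimes n}$. Because $\sigma$ has degree $0$ (it preserves the internal weight and the tensor-length degree that governs the coproduct), no Koszul signs are introduced when moving $\Phi$ across tensor factors.

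Next I would verify that $\Phi$ is a morphism of coalgebras for the deconcatenation coproduct, which is immediate from the displayed formula. Applying $\Phi$ to $\Delta(a_1,\cdots,a_n)=\sum_{i=0}^n(-1)^i(a_1,\cdots,a_i)\otimes(a_{i+1},\cdots,a_n)$ and comparing, the sign $(-1)^i$ depends only on the cut position $i$ and not on the entries, so
$$\Delta(\Phi(a_1,\cdots,a_n))=(\Phi\otimes\Phi)\,\Delta(a_1,\cdots,a_n).$$
Hence $\Phi$ is a coalgebra automorphism of $\mathrm{T}^c(sV)$, invertible since $\sigma$ is.

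The one step that actually uses the hypotheses on $\sigma$, and the place I would be most careful, is showing that $\Phi$ preserves the subspace $A^\ac\subseteq\mathrm{T}^c(sV)$, so that its restriction is well-defined. Here I would use the intersection description $A^\ac_n\cong\bigcap_{i+j+2=n}(sV)^{\otimes i}\otimes(s^2R)\otimes(sV)^{\otimes j}$. Extending $\sigma$ to $\mathrm{T}(V)$ forces $\sigma(R)=R$, hence $(s^2\sigma)(s^2R)=s^2R$, while $\sigma(V)=V$ gives $(s\sigma)(sV)=sV$; therefore $\Phi$ carries each factor $(sV)^{\otimes i}\otimes(s^2R)\otimes(sV)^{\otimes j}$ into itself, and so preserves the intersection $A^\ac_n$ for every $n$.

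Finally, combining these observations: $\Phi$ is a coalgebra automorphism of $\mathrm{T}^c(sV)$ that stabilizes the subcoalgebra $A^\ac$, so its restriction $\Phi|_{A^\ac}$ is a coalgebra automorphism of $A^\ac$, and by construction this restriction is exactly the action of $\sigma$ on $A^\ac$ described before the statement. I expect the coproduct-compatibility to be essentially formal, with the sign-bookkeeping trivialized by $\sigma$ being degree $0$; the only genuine content is the invariance of the quadratic-dual intersection under $\Phi$, which is precisely where $\sigma(R)=R$ enters.
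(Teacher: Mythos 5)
Your proposal is correct and follows essentially the same route as the paper: both arguments reduce to the observation that the deconcatenation coproduct commutes with the slot-wise application of $\sigma$ on $(sV)^{\otimes n}$ (the sign $(-1)^i$ depending only on the cut position), with the stabilization of $A^\ac_n=\bigcap (sV)^{\otimes i}\otimes(s^2R)\otimes(sV)^{\otimes j}$ coming from $\sigma(V)=V$ and $\sigma(R)=R$. The only difference is organizational -- the paper establishes the invariance of $A^\ac$ in the paragraph preceding the lemma and proves only the coproduct compatibility inside the proof, whereas you fold both steps into a single argument via the ambient automorphism $\Phi$ of $\mathrm{T}^c(sV)$.
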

\begin{proof}
Recall that $A^\ac_n\cong\bigcap_{i+j+2=n}V^{\otimes i}\otimes R\otimes V^{\otimes j}$.
Now we prove
$$(\sigma\otimes\sigma)\circ\Delta=\Delta\circ\sigma.$$
For any homogeneous element $r\in A^\ac_m$, it has the form $v_1\otimes\cdots\otimes v_m$. So
$$\sigma(r)=\sigma(v_1)\otimes\cdots\otimes \sigma(v_m)\in A^\ac_m.$$
We have
\begin{eqnarray*}
(\sigma\otimes\sigma)\circ\Delta(r)&=&(\sigma\otimes\sigma)(\sum_k (-1)^k (v_1\otimes \cdots\otimes v_k)\otimes (v_{k+1}\otimes\cdots\otimes v_m))\\
&=&\sum_k (-1)^k(\sigma(v_1)\otimes \cdots\otimes \sigma(v_k))\otimes (\sigma(v_{k+1})\otimes\cdots\otimes \sigma(v_m))
\end{eqnarray*}
and
\begin{eqnarray*}
\Delta\circ\sigma(r)&=&\Delta(\sigma(v_1)\otimes\cdots\otimes \sigma(v_m)\\
&=&\sum_k(-1)^k (\sigma(v_1)\otimes \cdots\otimes \sigma(v_k))\otimes (\sigma(v_{k+1})\otimes\cdots\otimes \sigma(v_m)).
\end{eqnarray*}
This implies that $\sigma$ is a coalgebra map.
\end{proof}

Consider the following complex 
\[
0\rightarrow A_{\sigma}\otimes A^\ac_n\xrightarrow{b}  A_{\sigma}\otimes A^\ac_{n-1}\rightarrow\cdots\rightarrow  A_{\sigma}\otimes A^\ac_1\xrightarrow{b}A\rightarrow 0
\]
with differential $b$ given by
\[
b(a\otimes f)=a\sigma(x_i)\otimes x_i^*f+(-1)^mx_ia\otimes fx_i^*,
\]
for any $a\otimes f\in A_{\sigma}\otimes A^\ac_m$.
It is direct to check $b^2=0$ by the following
\begin{eqnarray*}
b^2(a\otimes f)&=&b(a\sigma(x_i)\otimes x_i^*f+(-1)^mx_ia\otimes fx_i^*)\\
&=&a\sigma(x_i)\sigma(x_j)\otimes x_j^*x_i^*f+(-1)^{m-1}x_ja\sigma(x_i)\otimes x_i^*fx_j^*\\
&&+(-1)^mx_ia\sigma(x_j)\otimes x_j^*fx_i^*+(-1)^{m+(m-1)}x_jx_ia\otimes fx_i^*x_j^*\\
&=&0.
\end{eqnarray*}
We denote this complex by $\mathrm{K}_\bullet(A_{\sigma})=(A_{\sigma}\otimes A^\ac,b)$.

\begin{proposition}There is a quasi-isomorphism
$$q: \mathrm{K}_\bullet(A_{\sigma})\rightarrow (\mathrm{CH}_\bullet(A;A_{\sigma}),b).$$
\end{proposition}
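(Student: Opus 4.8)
The plan is to identify $q$ as the map induced by the Koszul quasi-isomorphism $\tilde q\colon A^\ac\to\mathrm B(A)$ of the previous proposition after tensoring down by the bimodule $A_\sigma$, and then to deduce that it is a quasi-isomorphism by comparing two free bimodule resolutions of $A$. The point is that both complexes in the statement are values of the additive functor $(-)\otimes_{A^e}A_\sigma$: the reduced Hochschild complex $\mathrm{CH}_\bullet(A;A_\sigma)\cong A_\sigma\otimes\mathrm B(A)$ is obtained from the normalized bar resolution $A\otimes\mathrm B(A)\otimes A\twoheadrightarrow A$, and $\mathrm K_\bullet(A_\sigma)$ is obtained, as I will check, from the two-sided Koszul resolution $A\otimes A^\ac\otimes A\twoheadrightarrow A$. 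Since $A$ is Koszul, both are resolutions of $A$ by free $A^e$-modules.

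First I would write down the two-sided Koszul complex
\[
\cdots\to A\otimes A^\ac_m\otimes A\xrightarrow{d}A\otimes A^\ac_{m-1}\otimes A\to\cdots\to A\otimes A\to A\to 0
\]
with differential $d(a\otimes f\otimes b)=\sum_i ax_i\otimes x_i^*f\otimes b+(-1)^m\sum_i a\otimes fx_i^*\otimes x_ib$, and verify $d^2=0$ exactly as for \eqref{Koszulcomplex}. Its exactness is equivalent to the Koszulness of $A$: it consists of free bimodules, and tensoring it on the right by $k$ recovers \eqref{Koszulcomplex}, whose exactness is the defining property of a Koszul algebra, a standard base-change argument then upgrading this to exactness of the bimodule complex. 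Because $\tilde q\colon A^\ac\to\mathrm B(A)$ is compatible with the canonical twisting cochains to $A$, it induces a chain map $\mathrm{id}_A\otimes\tilde q\otimes\mathrm{id}_A\colon A\otimes A^\ac\otimes A\to A\otimes\mathrm B(A)\otimes A$ lying over $\mathrm{id}_A$; being a morphism between free $A^e$-resolutions of the same module inducing the identity on $A$, it is automatically a homotopy equivalence of complexes of $A^e$-modules.

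Next I would apply $(-)\otimes_{A^e}A_\sigma$. Using the identification $(A\otimes X\otimes A)\otimes_{A^e}A_\sigma\cong A_\sigma\otimes X$, $(\alpha\otimes x\otimes\beta)\otimes s\mapsto(\beta s\sigma(\alpha))\otimes x$, the bar side becomes $A_\sigma\otimes\mathrm B(A)=\mathrm{CH}_\bullet(A;A_\sigma)$ with its reduced Hochschild boundary (the wrap-around face acquiring the twist by $\sigma$), while the Koszul side becomes $A_\sigma\otimes A^\ac=\mathrm K_\bullet(A_\sigma)$. Tracking the two summands of $d$ through this identification gives, for $s\otimes f\in A_\sigma\otimes A^\ac_m$, the induced differential $\sum_i s\sigma(x_i)\otimes x_i^*f+(-1)^m\sum_i x_is\otimes fx_i^*$, which is exactly the differential of $\mathrm K_\bullet(A_\sigma)$ in the statement; the twist $\sigma$ in the first summand appears precisely because the right action on $A_\sigma$ is twisted, so that the left-hand tensor factor, which acts through this right structure after tensoring over $A^e$, contributes $\sigma(x_i)$ rather than $x_i$. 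The induced map is then $\mathrm{id}_{A_\sigma}\otimes\tilde q=q$. Since $\mathrm{id}_A\otimes\tilde q\otimes\mathrm{id}_A$ is a homotopy equivalence and $(-)\otimes_{A^e}A_\sigma$ is additive, $q$ is again a homotopy equivalence, in particular a quasi-isomorphism.

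The main obstacle is bookkeeping rather than conceptual: I must pin down the two-sided Koszul bimodule resolution with the correct signs in its differential, and confirm that tensoring the comparison map $\mathrm{id}_A\otimes\tilde q\otimes\mathrm{id}_A$ with $A_\sigma$ reproduces on the nose the $\sigma$- and sign-decorated boundary of $\mathrm K_\bullet(A_\sigma)$, and likewise that the normalization of $\mathrm B(A)$ used for $\tilde q$ matches the reduced Hochschild complex so that no correction terms appear. If one prefers to avoid the bimodule resolution, the alternative is to check directly that $q=\mathrm{id}_{A_\sigma}\otimes\tilde q$ is a chain map and then run a bicomplex comparison against the already-established quasi-isomorphism $\tilde q$; but the resolution-comparison route is cleaner and localizes all the sign work into the single identification $(A\otimes X\otimes A)\otimes_{A^e}A_\sigma\cong A_\sigma\otimes X$.
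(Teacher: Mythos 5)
Your proposal is correct and follows essentially the same route as the paper: both compare the two-sided Koszul bimodule resolution $A\otimes A^{\ac}\otimes A$ of $A$ with the bar resolution via the map induced by $\tilde q$, and then apply $(-)\otimes_{A^e}A_\sigma$ to obtain $q$. Your observation that the comparison map between the two free resolutions is a homotopy equivalence, so that the tensored map remains one, merely makes explicit a step the paper leaves implicit; the rest is the same bookkeeping of signs and of the $\sigma$-twist.
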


\begin{proof}
Consider the complex  $\mathrm{K}'_\bullet(A)=\bigoplus_m A\otimes A^\ac_m\otimes A$
with the differential $b'=b'_L+b'_R$, 
where $b'_L$ and $b'_R$ are given by
$$b'_L(r\otimes f\otimes s)=\sum_i rx_i\otimes fx_i^*\otimes s,
\quad b'_R(r\otimes f\otimes s)=\sum_i (-1)^mr\otimes x_i^*f\otimes x_is,$$
for any $r\otimes f\otimes s\in \mathrm{K}_m(A)$.
It is direct to check that 
$(b'_L)^2=(b'_R)^2=b'_Lb'_R+b'_Rb'_L=0$. 
Hence $b^2=0$. 
The Koszul property of $A$ (\textit{c.f.} \cite{Krahmer}, Proposition 19) implies that $\mathrm{K}'_\bullet(A)$ is 
a resolution of $A$ as $A$-bimodules. 

Now we have two $A$-bimodules free resolutions of $A$, $A\otimes A^\ac\otimes A$ as above and
the two sided bar resolution $\tilde{\mathrm{B}} (A)$ (recall that it is $A\otimes \mathrm B(A)\otimes A$ 
with extra twisted differential). Recall that $A^\ac_m$ is a subset of $V^{\otimes m}$. 
Let $q': A\otimes A^\ac\otimes A\rightarrow A\otimes \mathrm{B} (A)\otimes A$
be the extension of $\tilde q$, which then commutes with the differentials on both sides.
Then $q'$ is a quasi-isomorphism (see \cite{VdBT}, Proposition 3.3).

It is direct to see that $A_{\sigma}\otimes_{A^e}\mathrm{K}'_\bullet(A)
=(\mathrm{K}_\bullet(A_{\sigma}),b)$,
and $A_{\sigma}\otimes_{A^e}\tilde{\mathrm{B}}(A)=(\mathrm{CH}_\bullet(A; A_{\sigma}),b)$. 
Let $q=\mathrm{Id}\otimes q'$. Then $q$ is the desired quasi-isomorphism.
\end{proof}

\subsection{Two quasi-isomorphisms}
Suppose $A$ is a Koszul AS-regular algebra.
The following result is nowadays well-known.

\begin{theorem}[Smith \cite{Smith}, Proposition 5.10]
Suppose $A$ is a Koszul algebra. Then $A$ is AS-regular if and only if $A^!$ is Frobenius.
\end{theorem}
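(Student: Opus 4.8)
The plan is to prove both directions at once by computing $\mathrm{Ext}^\bullet_A(k,A)$ from the small Koszul resolution and reading the Gorenstein condition off as a nondegeneracy statement for $A^!$. First I would use Koszulity to take the minimal graded free resolution \eqref{Koszulcomplex} of the trivial left module ${}_Ak$, with $P_i=A\otimes A^\ac_i$ and $A^\ac_i=(A^!_i)^*$. Because this resolution is minimal, the projective dimension of $k$ is the largest $i$ with $A^\ac_i\neq0$; hence $A$ has finite global dimension $n$ exactly when $A^\ac_i=0$ for $i>n$ and $A^\ac_n\neq0$, i.e. when $A^!$ is finite dimensional with top nonzero degree $n$. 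This identifies condition (1) of AS-regularity with the finite-dimensionality built into the definition of a Frobenius algebra, leaving the Gorenstein condition (2) to be matched with the nondegeneracy of the pairing \eqref{Frobeniuspairing}.

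Next I would apply $\mathrm{Hom}_A(-,A)$ to $P_\bullet$. Since $\mathrm{Hom}_A(A\otimes A^\ac_i,A)\cong(A^\ac_i)^*\otimes A\cong A^!_i\otimes A$, this yields a cochain complex $Q^\bullet$ of free right $A$-modules with $Q^i=A^!_i\otimes A$ and differential the transpose of $b$, namely $\xi\otimes a\mapsto\sum_j (x_j^*\xi)\otimes(x_j a)$. By construction $H^i(Q^\bullet)=\mathrm{Ext}^i_A(k,A)$, so condition (2) says precisely that $Q^\bullet$ is acyclic for $i\neq n$ with $H^n(Q^\bullet)\cong k$.

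The core of the argument is the comparison of $Q^\bullet$ with the Koszul resolution $P'_\bullet$ of the trivial \emph{right} module $k_A$, whose terms are $P'_i=A^\ac_i\otimes A$. A nondegenerate Frobenius pairing on $A^!$ is exactly an isomorphism $A^!_i\cong(A^!_{n-i})^*=A^\ac_{n-i}$ in every degree, and I would check that these isomorphisms assemble into an isomorphism of complexes $Q^\bullet\cong P'_{n-\bullet}$, matching the differential of $Q^\bullet$ with the Koszul differential of $P'_\bullet$. Granting this, the acyclicity of $P'_\bullet$ (which is itself the Koszul property of $A$) forces $H^i(Q^\bullet)=H_{n-i}(P'_\bullet)$ to vanish for $i\neq n$ and to equal $k$ for $i=n$; this proves that a Frobenius $A^!$ yields an AS-regular $A$. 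Conversely, given the Gorenstein condition, I would run the bigraded bookkeeping in reverse: the lowest internal-degree part of $H^n(Q^\bullet)\cong k$ pins down $\dim A^!_n=1$, and the vanishing of the remaining cohomology forces each multiplication pairing $A^!_i\otimes A^!_{n-i}\to A^!_n\cong k$ to be perfect, which is the nondegeneracy defining a Frobenius structure.

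I expect the main obstacle to be this last, converse direction, together with the careful construction of the comparison isomorphism $Q^\bullet\cong P'_{n-\bullet}$: one must track the internal gradings and Koszul signs so that the contraction differential on the $A^\ac$-side is correctly transposed to a multiplication on the $A^!$-side, and then extract the perfectness of all the pairings, degree by degree, from a single cohomological vanishing statement. These are matters of bigraded bookkeeping rather than of new ideas, but they are where the real work lies.
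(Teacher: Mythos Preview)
The paper does not prove this theorem; it simply quotes it from Smith \cite{Smith} and proceeds to use it. There is therefore no proof in the paper to compare your proposal against.

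That said, your outline is essentially the standard argument and is correct in its main ideas: compute $\mathrm{Ext}^\bullet_A(k,A)$ from the Koszul resolution, identify the resulting cochain complex $Q^\bullet=A^!_\bullet\otimes A$, and observe that a Frobenius isomorphism $A^!_i\cong A^{\ac}_{n-i}$ identifies $Q^\bullet$ with the shifted right-module Koszul resolution $P'_{n-\bullet}$. One point you should watch is that the Frobenius isomorphism $\psi:A^!\to A^{\ac}$ is only a bimodule isomorphism up to the Nakayama twist, so when you check compatibility of differentials you will see $\sigma$ appear on one side; this does not obstruct the argument (the twist is by an automorphism, so the resulting complex is still exact with $H^n\cong k$), but it explains why the statement is an equivalence of properties rather than of structures. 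For the converse direction your degree-by-degree extraction of the perfect pairing from the Gorenstein condition is the right idea; the cleanest way to carry it out is to note that $H^n(Q^\bullet)\cong k$ concentrated in the correct internal degree forces $\dim A^!_n=1$, and then the acyclicity of $Q^\bullet$ in degrees $<n$, read in each internal weight separately, gives exactly the injectivity of the maps $A^!_i\to\mathrm{Hom}_k(A^!_{n-i},A^!_n)$ induced by multiplication.
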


Now suppose $A$ admits semisimple Nakayama automorphism $\sigma$. 
By Van den Bergh (see \cite{VdBN}, Theorem 9.2), the adjoint $\sigma^*$ of $\sigma$ is the Nakayama automorphism of $A^!$. 
Since $\sigma$ is semisimple, $\sigma^*$ is also semisimple.
Recall from previous subsection that $\sigma$ also gives semisimple Nakayama automorphism of
$A^{\ac}$.
The purpose of this subsection is to prove the following
$$
\mathrm{HH}_\bullet(A; A_\sigma)\cong\mathrm{HH}^\bullet(A^!;\;_\sigma\!A^{\ac}),
$$ 
which commutes with $\mathrm B$ on the left and $\mathrm B^*$ on the right.

Recall that the cobar construction $\Omega(A^{\ac})$ is a DG free algebra, we may extend
the coalgebra automorphism
$\sigma: A^{\ac}\to A^{\ac}$ to be a DG algebra automorphism of $\Omega(A^{\ac})$.
Consider the complex $\mathrm{CH}_\bullet(\Omega(A^{\ac}); \Omega(A^{\ac})_{\sigma})$
(sometimes also denoted by $\Omega(A^\ac)_\sigma\otimes \mathrm{B}\Omega(A^\ac)$).
Assume $\sigma$ is semisimple, then we set $A_{\mu}^{\ac}:=\{a\in A^{\ac}|\sigma(a)=\mu a\}$. Let us denote
$$\Omega(A^{\ac})_{\mu}=\bigoplus_{n\geq 0}\bigoplus_{\Pi_{i=1}^n\mu_i=\mu}A^{\ac}_{\mu_1}\otimes\cdots\otimes A^{\ac}_{\mu_n}$$ 
and 
$$\mathrm{CH}_\bullet^{\mu}(\Omega(A^{\ac}); \Omega(A^{\ac})_{\sigma})=\bigoplus_{n\geq 0}\bigoplus_{\Pi_{i=0}^n\mu_i=\mu} \Omega(A^{\ac})_{\mu_0}\otimes \Omega(A^{\ac})_{\mu_1}\otimes \cdots\otimes \Omega(A^{\ac})_{\mu_n}.$$
The restriction map of $d$ makes $\mathrm{CH}_\bullet(\Omega(A^{\ac}); \Omega(A^{\ac})_{\sigma})_{\mu}$ to be a complex. 
Denote its homology group by $\mathrm{HH}^\mu_\bullet(\Omega(A^{\ac}),\Omega(A^{\ac})_\sigma)$.
Again by Kowlzig and Krahmer (\cite{KK2}, Proposition 2.7, Lemma 7.1) we have the following:

\begin{lemma}
On the complex $$\mathrm{CH}_\bullet(\Omega(A^{\ac}); \Omega(A^{\ac})_{\sigma}),$$ we have the identity
$$d\circ \mathrm{B}+\mathrm{B}\circ d=\mathrm{Id}-\mathrm{T}.$$
\end{lemma}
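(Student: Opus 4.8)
The plan is to treat this as an instance of the general homotopy formula of Kowalzig and Krahmer, now applied to the differential graded algebra $\Omega(A^{\ac})$ equipped with the extended automorphism $\sigma$. Since $\Omega(A^{\ac})$ is a DG algebra, its total Hochschild differential splits as $d=b_0+b_1$, where $b_0$ is induced by the internal (cobar) differential $d_{\Omega(A^\ac)}$ acting on the tensor factors and $b_1$ is the $\sigma$-twisted bar differential. I would prove the desired identity by establishing the homotopy formula separately for each summand and then adding the two.

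For the bar part, I would fix the internal grading and regard $\Omega(A^{\ac})$ momentarily as an ordinary associative algebra carrying the automorphism $\sigma$. The identity $b_1\mathrm{B}+\mathrm{B}b_1=\mathrm{Id}-\mathrm{T}$ is then exactly the formula of Kowalzig and Krahmer recalled as \cite{KK2}, (2.19) in the AS-regular case. Concretely, one expands $b_1\mathrm{B}$ and $\mathrm{B}b_1$ as sums over the cyclic rotations appearing in the twisted Connes operator; the juxtaposition terms produced by $b_1$ telescope against the rotated terms of $\mathrm{B}$, cancelling in pairs, and the only surviving contributions are $(a_0,a_1,\ldots,a_n)$ and $-(\sigma(a_0),\sigma(a_1),\ldots,\sigma(a_n))$, i.e. $\mathrm{Id}-\mathrm{T}$.

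For the internal part, I would show that $\mathrm{B}$ is a chain map up to sign with respect to $b_0$, that is $b_0\mathrm{B}+\mathrm{B}b_0=0$. The essential input here is that $\sigma$ is a DG algebra automorphism of $\Omega(A^{\ac})$, so that $\sigma$ commutes with $d_{\Omega(A^\ac)}$. Because of this, applying $d_{\Omega(A^\ac)}$ to the $\sigma$-twisted, cyclically rotated tensors occurring in $\mathrm{B}$ produces the same tensors as applying $\mathrm{B}$ after $b_0$, but with opposite Koszul signs; the shift signs $(-1)^{ni}$ in $\mathrm{B}$ combine with the signs generated by commuting $d_{\Omega(A^\ac)}$ past the leading factors so that all terms cancel in pairs. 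Adding the two identities yields $d\mathrm{B}+\mathrm{B}d=\mathrm{Id}-\mathrm{T}$.

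The main obstacle is purely the sign bookkeeping in the DG cyclic setting, specifically verifying $b_0\mathrm{B}+\mathrm{B}b_0=0$: one must track the interaction of the rotation signs in $\mathrm{B}$ with the Koszul signs produced by moving the internal differential past tensor factors, and the cancellation works only because $\sigma d_{\Omega(A^\ac)}=d_{\Omega(A^\ac)}\sigma$. Granting the commutation established in the preceding lemma, this is a direct if delicate computation; alternatively, the whole statement may be quoted from \cite{KK2}, Proposition 2.7 and Lemma 7.1, applied verbatim to $\Omega(A^{\ac})$ with the extended automorphism $\sigma$.
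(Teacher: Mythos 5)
Your proposal is correct and is essentially the approach the paper takes: the paper offers no proof of this lemma at all, simply quoting \cite{KK2}, Proposition 2.7 and Lemma 7.1 for the DG algebra $\Omega(A^{\ac})$ with the extended automorphism $\sigma$, which is exactly your stated fallback. Your sketch of the direct verification --- splitting $d=b_0+b_1$, getting $b_1\mathrm{B}+\mathrm{B}b_1=\mathrm{Id}-\mathrm{T}$ from the twisted Rinehart--Connes computation and $b_0\mathrm{B}+\mathrm{B}b_0=0$ from the fact that $\sigma$ commutes with the cobar differential --- is a sound outline of what the cited result amounts to, and goes somewhat beyond what the paper itself records.
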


The above lemma implies 
$$\mathrm{HH}_\bullet(\Omega(A^{\ac}); \Omega(A^{\ac})_{\sigma})\cong\mathrm{HH}^1_\bullet(\Omega(A^{\ac}); \Omega(A^{\ac})_{\sigma}),$$
and the sub complex $\mathrm{CH}^1_\bullet(\Omega(A^{\ac}); \Omega(A^{\ac})_{\sigma})$ is a mixed complex.
Now we have the following two lemmas.

\begin{lemma}
Let $A$ be a Koszul algebra with an semi-simple automorphism $\sigma$, and $A^{\ac}$ be its Koszul dual coalgebra. Then we have a commutative diagram of quasi-isomorphisms of complexes up to homotopy
$$
\xymatrix{
\mathrm{CH}^1_\bullet(A;A_\sigma)\ar[d]_{i} &\mathrm{CH}^1_\bullet(\Omega(A^{\ac}); \Omega(A^{\ac})_\sigma)\ar[l]_-{p_1}& \mathrm{CH}^1_\bullet(A^{\ac};\;_\sigma\! A^{\ac})\ar[l]_-{p_2}\\
\mathrm{CH}_\bullet(A;A_\sigma)& A_{\sigma}\otimes A^{\ac}\ar[l]^{\phi_1} & \mathrm{CH}_\bullet(A^{\ac};\;_\sigma\! A^{\ac})\ar[u]_{p}\ar[l]^{\phi_2}.
}
$$
\end{lemma}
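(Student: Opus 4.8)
The plan is to verify that each of the six arrows is a quasi-isomorphism and then that the outer hexagon commutes up to homotopy, i.e.\ that the two composites $\mathrm{CH}_\bullet(A^{\ac};\;_\sigma\! A^{\ac})\to \mathrm{CH}_\bullet(A;A_\sigma)$ obtained through the top row and through the bottom row, namely $i\circ p_1\circ p_2\circ p$ and $\phi_1\circ\phi_2$, are homotopic. Three of the arrows are essentially already in hand. The vertical maps $i$ and $p$ are the inclusion of, respectively the projection onto, the eigenvalue-$1$ subcomplex; since $\sigma$ is semisimple the chain complexes split as direct sums of subcomplexes indexed by eigenvalue-products, and the identity $b\mathrm{B}+\mathrm{B}b=\mathrm{Id}-\mathrm{T}$ (together with its coalgebra analogue) forces $\mathrm{Id}-\mathrm{T}$ to be invertible on every summand with eigenvalue $\lambda\neq 1$. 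Those summands are therefore acyclic, so $i$ and $p$ are quasi-isomorphisms. The bottom-left map $\phi_1$ is exactly the quasi-isomorphism $q=\mathrm{Id}\otimes q'$ of the preceding Proposition, comparing the small Koszul model $\mathrm{K}_\bullet(A_\sigma)=A_\sigma\otimes A^{\ac}$ with the cyclic bar complex $\mathrm{CH}_\bullet(A;A_\sigma)$.

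Next I would treat the top row. The cobar map $q\colon\Omega(A^{\ac})\to A$ is a quasi-isomorphism of differential graded algebras, and since $\sigma$ was extended to $\Omega(A^{\ac})$ compatibly, it is $\sigma$-equivariant; hence the coefficient map $\Omega(A^{\ac})_\sigma\to A_\sigma$ is a quasi-isomorphism of bimodules and the induced map on cyclic bar complexes $\mathrm{CH}_\bullet(\Omega(A^{\ac});\Omega(A^{\ac})_\sigma)\to\mathrm{CH}_\bullet(A;A_\sigma)$ is a quasi-isomorphism, by invariance of Hochschild homology under equivariant quasi-isomorphisms of (over a field, automatically flat) algebras. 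Restricting to the eigenvalue-$1$ subcomplexes yields $p_1$. The map $p_2$ is the Hochschild incarnation of bar--cobar duality: the adjunction unit $A^{\ac}\to \mathrm{B}\Omega(A^{\ac})$ is a quasi-isomorphism of coalgebras, and feeding it into the cyclic bar construction produces the comparison $\mathrm{CH}_\bullet(A^{\ac};\;_\sigma\! A^{\ac})\to\mathrm{CH}_\bullet(\Omega(A^{\ac});\Omega(A^{\ac})_\sigma)$. I would show this is a quasi-isomorphism by filtering both sides by cobar weight and comparing associated gradeds, where the statement reduces to acyclicity of the bar--cobar resolution; this again passes to the eigenvalue-$1$ parts.

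For the bottom-right map $\phi_2$ I would use the Koszul co-resolution of the coalgebra $A^{\ac}$, dual to the one producing $\phi_1$: since $(A^{\ac})^{\ac}\cong A$, cotensoring $\;_\sigma\! A^{\ac}$ against the small co-resolution of $A^{\ac}$ lands in $A_\sigma\otimes A^{\ac}$, and the resulting map $\mathrm{CH}_\bullet(A^{\ac};\;_\sigma\! A^{\ac})\to A_\sigma\otimes A^{\ac}$ is a quasi-isomorphism by dualizing the $A$-side argument. Concretely $\phi_2$ collapses the internal tensor factors of the cyclic bar construction of $A^{\ac}$ via the projection $A^{\ac}\to A$ (the restriction of $q$), while retaining one $A^{\ac}$-factor as the resolution variable, mirroring $\phi_1=\mathrm{Id}\otimes q'$. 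It then remains to check commutativity. Because $p_1$, $p_2$, $\phi_1$, $\phi_2$ are all induced by a single piece of Koszul-duality data --- the quasi-isomorphisms $q\colon\Omega(A^{\ac})\to A$ and $\tilde q\colon A^{\ac}\to \mathrm{B}(A)$ of the previous subsection, which are adjoint under bar--cobar --- functoriality of the cyclic bar construction shows that both composites $\mathrm{CH}_\bullet(A^{\ac};\;_\sigma\! A^{\ac})\to\mathrm{CH}_\bullet(A;A_\sigma)$ arise from the same underlying comparison, and the homotopy between them is supplied by the contracting homotopy of the bar--cobar resolution.

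The main obstacle will be the construction of $p_2$ and $\phi_2$ together with the homotopy-commutativity of the hexagon: commutativity of a diagram of quasi-isomorphisms is not formal, so the delicate point is to pin down explicit, mutually compatible homotopies --- equivalently, to exhibit one master comparison of resolutions through which every arrow factors --- and to verify that all of this is compatible with the Connes operators $\mathrm{B}$ and $\mathrm{B}^*$ carried by the eigenvalue-$1$ mixed complexes in the top row, which is the property actually required for the intended application.
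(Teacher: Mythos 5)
Your proposal is correct and lands on the same six comparison maps, but you justify the individual arrows by different means than the paper, so a comparison is worthwhile. The paper writes all four horizontal maps down by explicit formulas --- $p_2$ sends $((v_1\cdots v_n),u)$ to $((v_1\cdots v_n),(u+\Delta(u)+\Delta^2(u)+\cdots))$ and $\phi_2$ collapses $(v_1,\dots,v_n,u)$ to $(\bar v_1\cdots\bar v_n,u)$, exactly as you describe --- and then proves quasi-isomorphism for all of them by a single uniform device: filtering both sides by the number of bar/cobar tensor factors and invoking the comparison theorem for the resulting spectral sequences. You instead treat $i$ and $p$ via acyclicity of the $\lambda\neq 1$ eigensummands forced by $b\mathrm B+\mathrm Bb=\mathrm{Id}-\mathrm T$ (which is how the paper obtained $\mathrm{HH}_\bullet\cong\mathrm{HH}^1_\bullet$ earlier, so this is consistent), $\phi_1$ via the preceding Proposition (same as the paper), $p_1$ via invariance of Hochschild homology under the $\sigma$-equivariant DG-algebra quasi-isomorphism $\Omega(A^{\ac})\to A$ (the paper reserves that citation, to Loday, for the next lemma and uses the filtration argument here), and $p_2$, $\phi_2$ via bar--cobar adjunction and the dual Koszul resolution, again reduced to a weight filtration. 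Both routes are valid; the paper's is more uniform and elementary, yours makes the functoriality underlying the diagram more visible. On commutativity you are in fact more careful than the source: the paper only asserts that all maps are morphisms of complexes and displays the diagram, whereas the two composites $i\circ p_1\circ p_2\circ p$ and $\phi_1\circ\phi_2$ differ on the nose (the former carries the extra terms coming from $\Delta^k(u)$, $k\geq 1$) and agree only up to homotopy, so your insistence on producing an explicit homotopy compatible with the Connes operators is a genuine point that the paper leaves implicit rather than a defect of your argument.
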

\begin{proof}
The map $p_1$ is defined by
$$p_1: ((v_1\cdots v_n), a_1,\cdots, a_m)\mapsto ((\bar v_1\cdots\bar v_n), \bar a_1,\cdots, \bar a_m),$$
here $a_i$ has the form $a_i:=(u_i^1\cdots u_i^{m_i})$  with $a_i^s\in A^\ac$, $s=1,\cdots, m_i$. And $\bar a_i$ is the imagine of $p:\Omega(A^\ac)\rightarrow A$, that is, $p(u_i^1\cdots u_i^{m_i})=\bar u_i^1\cdots \bar u_i^m\in A_m$.  The map $p_2$ is given by
$$p_2: ((v_1\cdots v_n), u)\mapsto ((v_1\cdots v_n), (u+\Delta(u)+\cdots+\Delta^n(u)+\cdots)),$$
for $v_i\in A^\ac$, $i=0,\cdots, n$. At the bottom of the diagram, the map $\phi_1$ is given by
$$\phi_1: (a, (v_1\cdots v_m))\mapsto (a, v_1,\cdots, v_m),$$
for $a\in A$ and $v_i\in V$. And the map $\phi_2$ is given by
$$\phi_2: (v_1,\cdots, v_n, u)\mapsto (\bar v_1\cdots \bar v_n, u),$$
for $v_i\in A^\ac$, $i=0,\cdots, n$. In the vertical direction, $i$ is the injective map and $p$ is the projective map. They are quasi-isomophisms up to homotopy. All these maps are all morphisms of complexes.

By a spectral sequence argument, 
all these morphisms are quasi-isomorphic. For example,
let us consider $p_1$. There exist filtrations on these two complexes given by
$$ \mathrm F_i(\mathrm{CH}^1_\bullet(\Omega(A^{\ac});\Omega(A^\ac)_\sigma ))=\bigoplus_{j\leq i}\{(a_0,a_1,\cdots, a_j)|a_k\in \Omega(A^\ac), 0\leq k\leq j\},$$
and
$$ \mathrm F_i(\mathrm{CH}^1_\bullet(A;A_\sigma))=\bigoplus_{j\leq i}\{(b_0,b_1,\cdots, b_j)|b_k\in A, 0\leq k\leq j\}.$$
The boundary maps are compatible with the filtrations respectively. Then the comparison theorem for spectral sequences guarantees the quasi-isomorphism. Similarly we can prove other maps are quasi-isomorphisms. 
\end{proof}

\begin{lemma}
Let $A$ be a Koszul algebra with an semi-simple automorphism $\sigma$ and $A^{\ac}$ be its Koszul dual coalgebra. Then we have the following quasi-isomorphisms of mixed complexes
$$
\xymatrix{
&\mathrm{CH}^1_\bullet(\Omega(A^{\ac});\Omega(A^\ac)_\sigma)\ar[dl]_{p_1}\ar[dr]^{q_2}&\\
\mathrm{CH}^1_\bullet(A;A_\sigma)& & \mathrm{CH}^1_\bullet(A^{\ac};\;_\sigma\!A^{\ac})}
$$
where $q_2$ is a homotopy inverse of $p_2$.
\end{lemma}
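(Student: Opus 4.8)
The plan is to observe that the previous lemma already provides $p_1$ and $q_2$ as quasi-isomorphisms of the underlying ($b$- or $d$-) complexes, so that the only new content here is their compatibility with the Connes operators. Recall that the structural identity $d\mathrm{B}+\mathrm{B}d=\mathrm{Id}-\mathrm{T}$ restricts on each eigenvalue-$1$ subcomplex to $d\mathrm{B}+\mathrm{B}d=0$, because there $\mathrm{T}=\mathrm{Id}$; hence $\mathrm{CH}^1_\bullet(A;A_\sigma)$, $\mathrm{CH}^1_\bullet(\Omega(A^{\ac});\Omega(A^{\ac})_\sigma)$ and $\mathrm{CH}^1_\bullet(A^{\ac};\;_\sigma\!A^{\ac})$ are genuine mixed complexes, once one checks that $\mathrm{B}$ preserves these subcomplexes, which it does since $\mathrm{B}$ only inserts the unit—of eigenvalue $1$—and applies $\sigma$ to some entries, hence preserves the product of eigenvalues and in particular the eigenvalue-$1$ part. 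It then suffices to show that $p_1$ and $q_2$ intertwine the relevant Connes operators $\mathrm{B}$ (strictly, or up to homotopy); combined with the fact, established in the previous lemma, that they are quasi-isomorphisms of the underlying complexes, this makes them quasi-isomorphisms of mixed complexes.

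For $p_1$ I would argue by naturality. The projection $p\colon\Omega(A^{\ac})\to A$ is a unital morphism of differential graded algebras and, by construction, commutes with the extended automorphism $\sigma$. The Connes operator on either Hochschild complex is assembled from only three operations: insertion of the unit in the zeroth slot, cyclic permutation of the tensor factors, and the $\sigma$-twist on the factors that wrap around. Since $p(1)=1$ and $p\sigma=\sigma p$, applying $p_1$ slot by slot commutes with each of these, so that $p_1\circ\mathrm{B}=\mathrm{B}\circ p_1$ on the nose. Thus $p_1$ is a strict morphism of mixed complexes, and being a $d$-quasi-isomorphism it is a quasi-isomorphism of mixed complexes.

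For $q_2$ I would first show that $p_2$ is a morphism of mixed complexes and then transport the conclusion along the homotopy inverse. The two Connes operators to be matched look quite different: on $\mathrm{CH}^1_\bullet(\Omega(A^{\ac});\Omega(A^{\ac})_\sigma)$ it is the ordinary cyclic operator of the cobar algebra, whereas on $\mathrm{CH}^1_\bullet(A^{\ac};\;_\sigma\!A^{\ac})$ it is the coalgebra Connes operator built from the counit $\epsilon$ and the $\sigma^*$-twisted cyclic permutation. The map $p_2$ unfolds the coefficient via the total iterated coproduct $u+\Delta(u)+\Delta^2(u)+\cdots$ while regarding each coalgebra entry as a length-one word in the cobar. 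The computation I would carry out shows that, under this unfolding, the counit factor $\epsilon$ on the coalgebra side corresponds precisely to the extraction of the length-zero (scalar) component in the cobar, and that the two cyclic permutations match after the coproduct has been expanded; tracking the Koszul signs produced by the suspensions then gives $p_2\circ\mathrm{B}=\mathrm{B}\circ p_2$. Granting this, the standard argument applies: since $q_2p_2\simeq\mathrm{Id}$ and $p_2q_2\simeq\mathrm{Id}$ as $b$-complexes and $p_2$ is a mixed-complex map, the defect $q_2\mathrm{B}-\mathrm{B}q_2$ is $b$-nullhomotopic, so $q_2$ is a quasi-isomorphism of mixed complexes as well.

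The hard part is the middle step of the last paragraph, namely verifying that $p_2$ intertwines the cobar cyclic operator with the counit-and-coproduct Connes operator of $A^{\ac}$. This is the combinatorial core of the cyclic form of Koszul duality, and the difficulty is almost entirely bookkeeping: identifying the role of $\epsilon$ with the extraction of the unit component after unfolding the coproduct, aligning the two cyclic permutations through the iterated-coproduct expansion, and reconciling the suspension signs of the cobar construction with the signs in the coalgebra Connes operator. I expect this to be most cleanly organized by induction on the number of tensor factors, mirroring the induction used earlier to establish $d\mathrm{B}+\mathrm{B}d=\mathrm{Id}-\mathrm{T}$; once it is in place the lemma follows.
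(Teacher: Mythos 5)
Your treatment of $p_1$ is fine and matches the paper's (which simply invokes the naturality of the cyclic structure under the DG algebra quasi-isomorphism $\Omega(A^{\ac})\to A$, citing Loday). The problem is the $q_2$ half. Your plan hinges on proving the strict identity $p_2\circ\mathrm{B}=\mathrm{B}\circ p_2$ and then transporting compatibility to a homotopy inverse. That identity is false for structural reasons: $p_2$ sends every chain $(v_1,\dots,v_n,u)$ to a chain of bar-length one, namely $\bigl((v_1\cdots v_n),\sum_k\Delta^k(u)\bigr)$, so $p_2\circ\mathrm{B}$ again has bar-length one (the coalgebra Connes operator applies the counit and keeps the shape), whereas the Connes operator on $\mathrm{CH}_\bullet(\Omega(A^{\ac});\Omega(A^{\ac})_\sigma)$ inserts the unit and \emph{raises} the bar-length, so $\mathrm{B}\circ p_2$ lands in bar-length two. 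The two sides do not live in the same summand, so no amount of sign bookkeeping will produce the equality; at best one could hope for commutation up to a $d$-homotopy, which you neither construct nor correctly identify as the actual task. The ``combinatorial core'' you defer is therefore aimed at a false statement.

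The paper avoids this by going in the opposite direction: it writes down an explicit $q_2$ which is the identity on bar-length-zero coefficients, cyclically reassembles bar-length-one chains $\bigl((u_1\cdots u_n),(v_1\cdots v_m)\bigr)\mapsto\sum_i\pm\bigl((v_{i+1}\cdots v_m u_1\cdots u_n\sigma(v_1)\cdots\sigma(v_{i-1})),v_i\bigr)$, and \emph{annihilates all chains of bar-length $\geq 2$}. Precisely because $q_2$ kills bar-length $\geq 2$, the identity $q_2\circ\mathrm{B}=\mathrm{B}\circ q_2$ can be checked strictly (in the nontrivial case both sides vanish). The remaining content, which your proposal also leaves unaddressed, is that this particular $q_2$ is a homotopy inverse of $p_2$: the paper verifies $q_2\circ p_2=\mathrm{id}$ directly and constructs an explicit recursive homotopy $h$ with $d\circ h+h\circ d=\mathrm{id}-p_2\circ q_2$. (Your appeal to the existence of \emph{some} homotopy inverse over a field is correct but does not produce one compatible with $\mathrm{B}$.) To repair your argument you would either have to adopt the paper's explicit $q_2$ and homotopy, or replace your strict claim about $p_2$ by a proof that $p_2\mathrm{B}-\mathrm{B}p_2$ is $d$-nullhomotopic, which is a genuinely different computation from the one you describe.
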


\begin{proof}
Since $\Omega(A^{\ac})\simeq A$ is a quasi-isomorphism of differential graded algebras, 
the map $p_1$ given in the previous lemma is a quasi-isomorphism of mixed complexes(\cite{Loday}, Proposition 2.5.15). 
We next construct the quasi-isomorphism $q_2$ of the mixed complexes, which is
a homotopy inverse of $p_2$.

From now on, let us denote any homogeneous element in the bar construction $\mathrm B\Omega(A^\ac)$ of $\Omega(A^\ac)$ by
$(a_1,\cdots,a_n)$ with $a_i\in \Omega(A^\ac)$,
and any homogeneous element in the cobar construction $\Omega(A^\ac)$ of $A^\ac$
by $(u_1\cdots u_n)$ or $(v_1\cdots v_m)$. 
The morphism $q_2$ is defined by
\begin{eqnarray}
\mathrm{CH}^1_\bullet(\Omega(A^{\ac});\Omega(A^\ac)_\sigma)&\rightarrow& \mathrm{CH}^1_\bullet(A^{\ac};\;_\sigma\!A^{\ac}),\nonumber\\
((u_1\cdots u_n), 1)&\mapsto &((u_1\cdots u_n), 1)\label{Hoch1}\\
((u_1\cdots u_n), (v_1\cdots v_m))&\mapsto& \sum_i(-1)^{\epsilon_i}\label{Hoch2}
((v_{i+1}\cdots v_mu_1\cdots u_n \sigma(v_1)\cdots \sigma(v_{i-1})), v_i)\\
((u_1\cdots u_n), a_1,\cdots, a_r)&\mapsto& 0,\quad r\geq 2,\label{Hoch3}
\end{eqnarray}
where $\epsilon_i=(|v_{i+1}|+\cdots +|v_m|-m+i)(|u_1|+\cdots +|u_n|-n+|v_1|+\cdots +|v_{i}|-i)$. 

It is direct to check $q_2$ is a morphism of complexes, that is, it commutes with
the Hochschild differential. Now we show $q_2$ commutes with $\mathrm B$. In fact,

(1) For Hochschild chains like the left hand side of \eqref{Hoch1}, we have:
\begin{eqnarray*}
\mathrm{B}\circ q_2((u_1\cdots u_n), 1)=\mathrm{B}((u_1\cdots u_n), 1)=\sum_{i=1}^n(-1)^{\epsilon_i}(u_{i+1}\cdots u_n\sigma(u_1)\cdots\sigma(u_{i-1}), u_i)
\end{eqnarray*}
and
\begin{eqnarray*}
q_2\circ \mathrm{B}((u_1\cdots u_n), 1)=q_2(1, (u_1\cdots u_n))= \sum_{i=1}^n(-1)^{\epsilon_i}(u_{i+1}\cdots u_n\sigma(u_1)\cdots\sigma(u_{i-1}), u_i).
\end{eqnarray*}
This means $q_2\circ \mathrm B=\mathrm B\circ q_2$ in this case.

(2) For Hochschild chains like the left hand side of \eqref{Hoch2}, we have: 
\begin{eqnarray*}
&&\mathrm{B}\circ q_2((u_1\cdots u_n), (v_1\cdots v_m))\\
&=&\mathrm{B}(\sum_{i=1}^m(-1)^{\epsilon_i}v_{i+1}\cdots v_mu_1\cdots u_n\sigma(v_1)\cdots \sigma(v_{i-1}), v_i)\\
&=&0
\end{eqnarray*}
and
\begin{eqnarray*}
&&q_2\circ \mathrm{B}((u_1\cdots u_n), (v_1\cdots v_m))=q_2(1, M)=0,
\end{eqnarray*}
for some $M\in \Omega(A^{\ac})^{\otimes 2}$.
This means $q_2\circ \mathrm B=\mathrm B\circ q_2$ in this case.

(3) For Hochschild chains like the left hand side of \eqref{Hoch3}, $q_2\circ \mathrm B=\mathrm B\circ q_2$ is automatic since both sides are always zero.

In summary, the above calculation implies that $q_2$ is a morphism of mixed complexes.
Next, we show that $p_2$ and $q_2$ are homotopy inverse to each other.

First, 
since
$$q_2\circ p_2((u_1\cdots u_n), v)=q_2((u_1\cdots u_n), (v+\Delta(v)+\cdots))=(u_1\cdots u_n), v),$$
we get $q_2\circ p_2=id$ on $\Omega(A^{\ac})_{\sigma}\otimes A^{\ac}$.

Second, we show
$$p_2\circ q_2\simeq id: \Omega(A^{\ac})_{\sigma}\otimes \mathrm{B}\Omega(A^{\ac})\rightarrow \Omega(A^{\ac})_{\sigma}\otimes \mathrm{B}\Omega(A^{\ac}).$$
The homotopy map, denoted by $h$, is given as follows:
First, let $$h_0(a)=0, \quad \mbox{for}\; a\in \Omega(A^{\ac}),$$
and for $n\geq 1$, 
$$h_n(a_0, a_1,\cdots, a_n, v)=0,$$
and
\begin{eqnarray*}
&&h_n(a_0, a_1,\cdots, a_{n-1}, a_nv)\\
&=&(-1)^{\mu_n}h_n(va_0, a_1,\cdots, a_{n-1}, a_n)+(-1)^{\nu_n}h_{n+1}(a_0, a_1,\cdots, a_n, v'v'')\\
&&+(-1)^{\epsilon_n}(a_0, a_1, \cdots, a_n, v),
\end{eqnarray*}
where $a_i\in \Omega(A^{\ac})$ for $i=0,\cdots,n$ and $v\in A^{\ac}$.
Here we use the Sweedler notation $\Delta(v)=\sum_{(v)}v_{(1)}\otimes v_{(2)}:=v'\otimes v''$. The signs are given by
$$\mu_n=(|v|-1)(|a_0|+\cdots +|a_n|+n),\quad \nu_n=|a_0|+\cdots +|a_n|+n+|v'|,\quad \epsilon_n=|a_0|+\cdots +|a_{n-1}|+n-1,$$
where $|a_i|=|a_i^1|+\cdots +|a_i^{i_s}|-i_s$, $i=0,\cdots, n$ for any $a_i=(a_i^1\cdots a_i^{i_s})$ with $a_i^j\in A^\ac$, $j=1,\cdots, i_s$.
Now let $h=\sum_{i=0}h_i$ and we claim that
\begin{equation}\label{oneidentity}
h\circ d+d\circ h=id-p_2\circ q_2.
\end{equation}
Assuming this identity, we obtain that $q_2$ is a quasi-isomorphism of chain complexes, and thus a quasi-isomorphism of mixed complexes. Then the proof is completed.
\end{proof}

\begin{proof}[Proof of \eqref{oneidentity}]
For any element $(a_0, \cdots, a_m,v_1\cdots v_n)\in \Omega(A^{\ac})_\sigma\otimes \mathrm{B}\Omega(A^{\ac})$, 
where $a_i$ and $(v_1\cdots v_m)\in\Omega(A^{\ac})$,
and $v_i\in A^\ac$ for $i=1,\cdots, n$,
we have
\begin{eqnarray*}
&&h(a_0,\cdots, a_n, v_1\cdots v_m)\\
&=&\sum_{i=2}^m\sum_{k=0}^\infty(-1)^{i_k}(v_{i+1}\cdots v_m a_0, \cdots, a_n, v_1\cdots v_{i-1}, \Delta^k(v_i)),
\end{eqnarray*}
where 
\begin{eqnarray*}
i_k&=&\mu_m+\cdots+\mu_{i+1}+|a_0|+\cdots+|a_n|+n+|v_1|+\cdots+|v_{i-1}|-i+1\\
&&+(k-1)|v_i^{(1)}|+\cdots+(k-i)|v_i^{(i)}|+\cdots +|v_i^{(k-1)}|,
\end{eqnarray*}
and
$$
\mu_s=(|v_{s}|-1)(|a_0|+\cdots+|a_n|+n+|v_1|+\cdots+|v_m|-|v_{s}|-m+1),
$$
for $s=i+1,\cdots, m$,
and
where we write $\Delta^k(v)=v^{(1)}\otimes\cdots \otimes v^{(k+1)}$. We have the following three
cases to check $h\circ d+d\circ h=id-p_2\circ q_2$:

(1) For Hochschild chains like the left hand side of \eqref{Hoch1}, it is direct to see 
$$(h\circ d+d\circ h)(a_0, 1)=(id-p_2\circ q_2)(a_0, 1)=0.$$

(2) For Hochschild chains like the left hand side of \eqref{Hoch2},
we have $$d\circ h(a,wv)= \sum_{n=0}(-1)^{\epsilon_n} d(a,w,\Delta^n(v))$$ 
with $\epsilon_n=|a|+|w|+1+(n-1)|v^{(1)}|+\cdots +(n-k)|v^{(k)}|+\cdots +|v^{(n-1)}|$, where $a=(u_1\cdots u_m)$ and $|a|=|u_1|+\cdots+|u_m|-m$, is equal to 
\begin{eqnarray*}
&&\sum_{n=0}(-1)^{\epsilon_n}\big(-(-1)^{|a|}(a\sigma(w),\Delta^n(v))-(-1)^{|a|+|\omega|+1}(a,wv^{(1)},v^{(2)},\cdots, v^{(n)})\\
&&-\sum_{s=1}^{n-1}(-1)^{|a|+|\omega|+|v^{(1)}|+\cdots+|v^{(s)}|+s+1}(a,w,v^{(1)},\cdots, v^{(s)}v^{(s+1)},\cdots, v^{(n)})\\
&&+(-1)^{|v^{(n)}-1|(|a|+|w|+|v^{(1)}+\cdots+|v^{(n-1)}|+n)}(v^{(n)}a, w,v^{(1)},\cdots, v^{(n-1)})\\
&&+\sum_{i=1}^m(-1)^{|u_1|+\cdots+|u_{i-1}|-i+1+|u_i'|+1}((u_1\cdots u_i'u_i''\cdots u_m),w,\Delta^n(v))\\
&&+(-1)^{|a|+|w'|+1}(a, w'w'',\Delta^n(v))\\
&&+\sum_{i=1}^n(-1)^{|a|+|w|+|v^{(1)}|+\cdots +|v^{(i-1)}|+i+|v^{(i)'}|+1}(a,w,v^{(1)},\cdots, v^{(i)'}v^{(i)''},\cdots, v^{(n-1)})\big).
\end{eqnarray*}
The second summand 
\begin{eqnarray*}
h\circ d(a,wv)&=&\sum_{i=1}^m (-1)^{|u_1|+\cdots +|u_{i-1}|-i+|u_i'|}h((u_1\cdots u_i'u_i''\cdots u_m),wv)\\
&&+h(a,(-1)^{|a|+|w'|+1}w'w''v+(-1)^{|a|+|w|+|v'|+1}wv'v'')
\end{eqnarray*}
has the following terms (we omit the sign)
\begin{eqnarray*}
&&(\Delta(a), w,v\pm\Delta(v)+\cdots)\pm(va, \Delta(w)\pm\cdots)\pm (a,\Delta(w), v\pm\Delta(v)\pm\cdots)\\
&&\pm(v''a,w,v'\pm\Delta(v')\pm\cdots)\pm(a,wv',v''\pm\Delta(v'')\pm\cdots).
\end{eqnarray*}
Recall the Construction of bar and cobar construction and the corresponding differentials, we obtain that $h\circ d+d\circ d$ is equal to
\begin{eqnarray*}
(id-p_2\circ q_2)(a,wv)&=& (a,wv)\pm p_2(a\sigma(w),v)\pm p_2(va,w)\\
&=&(a,wv)\pm (a\sigma(w),v\pm \Delta(v)\pm \cdots)\pm (va,w\pm \Delta(w)\pm \cdots).
\end{eqnarray*}
The above identities implies that 
$$d\circ h+h\circ d=id-p_2\circ q_2$$
in this case. For elements $(a,u_1\cdots u_m)\in \Omega(A^\ac)_\sigma\otimes \mathrm B\Omega(A^\ac)$ with $m\geq 3$, it is direct to obtain the identity
$$d\circ h+h\circ d=id-p_2\circ q_2$$
by similar computations as above.

(3) For Hochschild chains like the left hand side of \eqref{Hoch3} like
$$(a_0, \cdots, a_k,u_1\cdots u_m,v_1\cdots v_n)\in \Omega(A^{\ac})_\sigma\otimes \mathrm{B}\Omega(A^{\ac}),$$ 
where $a_i$, $(u_1\cdots u_m)$ and $(v_1\cdots v_n)\in\Omega(A^{\ac})$,
and $u_i, v_j\in A^\ac$ for $i=1,\cdots, m, j=1,\cdots, n$, we have that $d\circ h(a_0, \cdots, a_k,u_1\cdots u_m,v_1\cdots v_n)$ equals
\begin{eqnarray*}
&&(v_{i+1}\cdots\Delta(v_j)\cdots v_m a_0, a_1, \cdots, a_k, u_1\cdots u_m, v_1\cdots v_{i-1}, \Delta^s(v_i))\\
&&\pm(v_{i+1}\cdots v_m \Delta(a_0), a_1,\cdots, a_k, u_1\cdots u_m, v_1\cdots v_{i-1}, \Delta^s(v_i))\\
&&\pm(v_{i+1}\cdots v_m a_0, a_1, \cdots,\Delta(a_j),\cdots, a_k, u_1\cdots u_m, v_1\cdots v_{i-1}, \Delta^s(v_i))\\
&&\pm(v_{i+1}\cdots v_m a_0, a_1, \cdots, a_k, u_1\cdots\Delta(u_j)\cdots u_m, v_1\cdots v_{i-1}, \Delta^s(v_i))\\
&&\pm(v_{i+1}\cdots v_m a_0, a_1, \cdots, a_k, u_1\cdots u_m, v_1\cdots\Delta(v_j), v_{i-1}, \Delta^s(v_i))\\
&&\pm(v_{i+1}\cdots v_m a_0, a_1, \cdots, a_k, u_1\cdots u_m, v_1\cdots v_{i-1}, v_i^{(1)},\cdots, v_i^{(j)'}v_i^{(j)''},\cdots, v_i^{(s+1)})\\
&&\pm(v_{i+1}\cdots v_m a_0\sigma(a_1), a_2,\cdots, a_k, u_1\cdots u_m, v_1\cdots v_{i-1}, \Delta^s(v_i))\\
&&\pm(v_{i+1}\cdots v_m a_0, a_1, \cdots, a_ja_{j+1},\cdots, a_k, u_1\cdots u_m, v_1\cdots v_{i-1}, \Delta^s(v_i))\\
&&\pm(v_{i+1}\cdots v_m a_0, a_1, \cdots, a_{k-1} a_ku_1\cdots u_m, v_1\cdots v_{i-1}, \Delta^s(v_i))\\
&&\pm(v_{i+1}\cdots v_m a_0, a_1, \cdots, a_k, u_1\cdots u_mv_1\cdots v_{i-1}, \Delta^s(v_i))\\
&&\pm(v_{i+1}\cdots v_m a_0, a_1, \cdots, a_k, u_1\cdots u_m, v_1\cdots v_{i-1}v_i^{(1)},v_i^{(2)},\cdots, v_i^{(s+1)})\\
&&\pm(v_{i+1}\cdots v_m a_0, a_1, \cdots, a_k, u_1\cdots u_m, v_1\cdots v_{i-1}, v_i^{(1)},\cdots, v_i^{(j)}v_i^{(j+1)},\cdots, v_i^{(s+1)})\\
&&\pm(v_i^{(s+1)}v_{i+1}\cdots v_m a_0, a_1, \cdots, a_k, u_1\cdots u_m, v_1\cdots v_{i-1}, v_i^{(1)},\cdots, v_i^{(s)}),
\end{eqnarray*}
and the second summand $h\circ d(a_0,a_1, \cdots, a_k,u_1\cdots u_m,v_1\cdots v_n)$ equals
\begin{eqnarray*}
&&(v_{i+1}\cdots v_m a_0\sigma(a_1), a_2,\cdots, a_k, u_1\cdots u_m, v_1\cdots v_{i-1}, \Delta^s(v_i))\\
&&\pm(v_{i+1}\cdots v_m a_0, a_1, \cdots, a_ja_{j+1},\cdots, a_k, u_1\cdots u_m, v_1\cdots v_{i-1}, \Delta^s(v_i))\\
&&\pm(v_{i+1}\cdots v_m a_0, a_1, \cdots, a_{k-1} a_ku_1\cdots u_m, v_1\cdots v_{i-1}, \Delta^s(v_i))\\
&&\pm(v_{i+1}\cdots v_m a_0, a_1, \cdots, a_k, u_1\cdots u_mv_1\cdots v_{i-1}, \Delta^s(v_i))\\
&&\pm(u_{i+1}\cdots u_mv_1\cdots v_n a_0,a_1,\cdots, a_k, u_1\cdots u_{i-1},\Delta^s(u_i))\\
&&\pm(u_{i+1}\cdots u_mv_1\cdots v_n a_0,a_1,\cdots, a_k, u_1\cdots u_{i-1},\Delta^s(u_i))\\
&&\pm(v_{i+1}\cdots v_m \Delta(a_0), a_1,\cdots, a_k, u_1\cdots u_m, v_1\cdots v_{i-1}, \Delta^s(v_i))\\
&&\pm(v_{i+1}\cdots v_m a_0, a_1, \cdots,\Delta(a_j),\cdots, a_k, u_1\cdots u_m, v_1\cdots v_{i-1}, \Delta^s(v_i))\\
&&\pm(v_{i+1}\cdots v_m a_0, a_1, \cdots, a_k, u_1\cdots\Delta(u_j)\cdots u_m, v_1\cdots v_{i-1}, \Delta^s(v_i))\\
&&\pm(v_{i+1}\cdots\Delta(v_j)\cdots v_m a_0, a_1, \cdots, a_k, u_1\cdots u_m, v_1\cdots v_{i-1}, \Delta^s(v_i))\\
&&\pm(v_i''v_{i+1}\cdots v_m a_0, a_1, \cdots, a_k, u_1\cdots u_m, v_1\cdots v_{i-1}, \Delta^s(v_i'))\\
&&\pm(v_{i+1}\cdots v_m a_0, a_1, \cdots, a_k, u_1\cdots u_m, v_1\cdots v_{i-1}v_i', \Delta^{s-1}(v_i''))\\
&&\pm(v_{i+1}\cdots v_m a_0, a_1, \cdots, a_k, u_1\cdots u_m, v_1\cdots\Delta(v_j)\cdots v_{i-1}, \Delta^{s-1}(v_i)),
\end{eqnarray*}
where $\Delta(a_i)=\sum_{s=1}^{n_i}(a_i^1\cdots a_i^{s-1}a_i^{s'}a_i^{s''}a_i^{s+1}\cdots a_i^{n_i})$ if $a_i$ has the form $a_i=(a_i^1\cdots a_i^{n_i})$ with $a_i^j\in A^\ac$, $j=0,\cdots, n_i$.
From the coassociativity of the coproduct $\Delta$ on $A^\ac$, we see that
the sum of the above two expressions
$$(d\circ h+h\circ d)(a_0,a_1,\cdots, a_k, u_1\cdots u_m, v_1\cdots v_n)=(a_0,a_1,\cdots, a_k, u_1\cdots u_m, v_1\cdots v_n).$$
Since $q_2(a_0,a_1,\cdots, a_k, u_1\cdots u_m, v_1\cdots v_n)=0$ for $k\geq 1$, we have the identity
$$d\circ h+ h\circ d=id-p_2\circ q_2.$$

In summary, we proved the desired identity.
\end{proof}

\begin{theorem}\label{thm:Koszuldualitygivesisoofhomology}
Suppose $A$ is a Koszul AS-regular algebra with semisimple Nakayama automorphism $\sigma$.
Then we have isomorphism
$$
\mathrm{HH}_\bullet(A; A_\sigma)\cong \mathrm H_\bullet(\mathrm K_\bullet(A_\sigma))
\cong\mathrm{HH}_\bullet(A^{\ac}; \; _\sigma\!A^{\ac}),
$$
and such isomorphisms respect the Connes operator on both sides.
\end{theorem}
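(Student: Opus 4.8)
The plan is to prove the two isomorphisms separately and to track the Connes operator $\mathrm B$ through each. The first isomorphism $\mathrm{HH}_\bullet(A;A_\sigma)\cong\mathrm H_\bullet(\mathrm K_\bullet(A_\sigma))$ is immediate from the preceding proposition, which provides a quasi-isomorphism $q\colon \mathrm K_\bullet(A_\sigma)\to(\mathrm{CH}_\bullet(A;A_\sigma),b)$; passing to homology gives the claim, with $\mathrm K_\bullet(A_\sigma)$ serving as a small model for the twisted Hochschild homology.

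For the second isomorphism I would first pass to eigenvalue-$1$ subcomplexes on both sides. The identity $b\mathrm B+\mathrm Bb=\mathrm{Id}-\mathrm T$, which holds on $\mathrm{CH}_\bullet(A;A_\sigma)$ and on $\mathrm{CH}_\bullet(A^{\ac};{}_\sigma A^{\ac})$, shows that on the $\lambda$-component $\mathrm T$ acts as multiplication by $\lambda$, so $(1-\lambda)^{-1}\mathrm B$ is a contracting homotopy whenever $\lambda\neq 1$ (using $\mathrm{char}\,k=0$). Hence the homology is concentrated in the $\lambda=1$ part:
$$\mathrm{HH}_\bullet(A;A_\sigma)\cong\mathrm{HH}^1_\bullet(A;A_\sigma),\qquad \mathrm{HH}_\bullet(A^{\ac};{}_\sigma A^{\ac})\cong\mathrm{HH}^1_\bullet(A^{\ac};{}_\sigma A^{\ac}).$$
Moreover $\mathrm B$ preserves this eigenvalue decomposition: in each summand of $\mathrm B(a_0,\dots,a_n)$ the inserted unit contributes eigenvalue $1$ and $\sigma$ fixes the eigenspaces, so the total $\sigma$-eigenvalue of the output equals that of the input. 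The inclusions above are therefore maps of mixed complexes and respect $\mathrm B$ on homology.

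Having reduced to the eigenvalue-$1$ subcomplexes, I would invoke the second of the two preceding lemmas, which furnishes the zig-zag of quasi-isomorphisms of mixed complexes
$$\mathrm{CH}^1_\bullet(A;A_\sigma)\xleftarrow{\ p_1\ }\mathrm{CH}^1_\bullet(\Omega(A^{\ac});\Omega(A^{\ac})_\sigma)\xrightarrow{\ q_2\ }\mathrm{CH}^1_\bullet(A^{\ac};{}_\sigma A^{\ac}).$$
Here $p_1$ is a quasi-isomorphism of mixed complexes because $\Omega(A^{\ac})\xrightarrow{\sim}A$ is a quasi-isomorphism of differential graded algebras, and $q_2$ is the explicit homotopy inverse of $p_2$ constructed there and checked to commute with $\mathrm B$. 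Inverting $p_1$ on homology and composing with $q_2$ produces an isomorphism $\mathrm{HH}^1_\bullet(A;A_\sigma)\cong\mathrm{HH}^1_\bullet(A^{\ac};{}_\sigma A^{\ac})$ intertwining the two Connes operators; combined with the eigenvalue-$1$ reductions of the previous paragraph this yields $\mathrm{HH}_\bullet(A;A_\sigma)\cong\mathrm{HH}_\bullet(A^{\ac};{}_\sigma A^{\ac})$ compatibly with $\mathrm B$, as required.

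The only delicate point — and the real content packaged into the lemmas I am citing — is the $\mathrm B$-compatibility of the comparison maps that pass through the cobar construction $\Omega(A^{\ac})$. Showing these maps are quasi-isomorphisms of ordinary complexes is routine via a filtration/spectral-sequence comparison, but promoting them to quasi-isomorphisms of mixed complexes forces one to produce the explicit chain homotopy $h$ with $h\circ d+d\circ h=\mathrm{id}-p_2\circ q_2$ of \eqref{oneidentity} and to verify directly that $q_2$ commutes with the cyclic operator. That Sweedler-type bookkeeping is where the difficulty lies; granting it, the theorem is a formal assembly of the quasi-isomorphism $q$, the eigenvalue-$1$ reductions, and the mixed-complex zig-zag.
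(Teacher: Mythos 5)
Your proposal is correct and follows essentially the same route as the paper: the first isomorphism comes from the quasi-isomorphism $q\colon \mathrm K_\bullet(A_\sigma)\to\mathrm{CH}_\bullet(A;A_\sigma)$, and the second from the eigenvalue-$1$ reduction via $b\mathrm B+\mathrm Bb=\mathrm{Id}-\mathrm T$ together with the mixed-complex zig-zag $p_1$, $q_2$ through $\mathrm{CH}^1_\bullet(\Omega(A^{\ac});\Omega(A^{\ac})_\sigma)$, which is exactly what the paper's proof assembles from the two preceding lemmas. You also correctly locate the real content in the $\mathrm B$-compatibility of $q_2$ and the homotopy $h$, which both you and the paper delegate to those lemmas.
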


\begin{proof}
Combining the above two lemmas and the following fact
$$(A_\sigma\otimes A^\ac,b)\cong (A\otimes\;_\sigma\!A^\ac,b)\simeq (\Omega(A^\ac)\otimes \;_\sigma\! A^\ac,\delta^*)$$
where we use the $A$-bimodule structure of $A_\sigma$ in $A_\sigma\otimes A^\ac$ and 
the $A^\ac$-cobimodule structure of $\;_\sigma\!A^\ac$ in $A\otimes \;_\sigma\!A^\ac$, we get the proof.
\end{proof}

Now consider the following complex
$$\mathrm K^\bullet(A\otimes A^!)=\{
0\rightarrow A\otimes A\rightarrow \cdots\rightarrow A\otimes A^!_{n-1}\otimes A\rightarrow A\otimes A^!_n\otimes A\rightarrow \cdots\}$$
with coboundary $\delta$ given by
$$\delta(a\otimes f\otimes b):=a\otimes x_i^*f\otimes x_ib+(-1)^{m}ax_i\otimes fx_i^*\otimes b.$$

\begin{theorem}\label{thm:Koszuldualitygivesisoofcohomology}
Let $A$ be a Koszul algebra and denoted its Koszul dual algebra by $A^!$. Then there are natural isomorphisms
$$\mathrm{HH}^\bullet(A)\cong \mathrm{H}^\bullet(\mathrm K^\bullet(A\otimes A^!))\cong \mathrm{HH}^\bullet(A^!)$$
of graded commutative algebras. The products on both sides are the Gerstenhaber cup product.
\end{theorem}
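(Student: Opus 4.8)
The plan is to compute $\mathrm{HH}^\bullet(A)$ and $\mathrm{HH}^\bullet(A^!)$ through the bimodule Koszul resolution and to funnel both computations into the single, manifestly symmetric complex $\mathrm K^\bullet(A\otimes A^!)$, after which the two isomorphisms become the two halves of one argument. The guiding observation is that the coboundary $\delta(a\otimes f\otimes b)=\sum_i a\otimes x_i^*f\otimes x_ib+(-1)^m\sum_i ax_i\otimes fx_i^*\otimes b$ pairs the generators $x_i\in V$ of $A$ against the dual generators $x_i^*\in V^*$ of $A^!$ in a completely balanced fashion. Since $A$ Koszul forces $A^!$ Koszul with $(A^!)^!\cong A$ and Koszul dual coalgebra the graded dual of $A$, interchanging $x_i$ with $x_i^*$ (and the two summands of $\delta$) carries the Koszul data of $A$ to that of $A^!$, so a single model serves both algebras and, crucially, both identifications land in $\mathrm K^\bullet(A\otimes A^!)$ with its one cohomological grading, which is what makes the two Hochschild cohomologies match degree for degree.

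For the first isomorphism I would start from the bimodule Koszul resolution $\mathrm K'_\bullet(A)=A\otimes A^\ac_\bullet\otimes A$ and the comparison quasi-isomorphism $q'\colon \mathrm K'_\bullet(A)\to \tilde{\mathrm B}(A)$ to the two-sided bar resolution. Both are projective resolutions of $A$ over $A^e$, so $q'$ is a homotopy equivalence and induces a comparison between the Hochschild cochain complex $\mathrm{CH}^\bullet(A;A)$ and the small cochain complex coming from $\mathrm K'_\bullet(A)$. The technical core of this step is to identify that small complex with $\mathrm K^\bullet(A\otimes A^!)$: using that each $A^\ac_m\subset V^{\otimes m}$ is finite dimensional with $(A^\ac_m)^*\cong A^!_m$, one rewrites the relevant Hom-spaces in terms of $A^!_m$ and $A$ and checks, keeping careful track of signs and of the placement of the tensor factors, that the dual of $b'=b'_L+b'_R$ is exactly $\delta$. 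That the induced map is a quasi-isomorphism can then be organized exactly as in the preceding lemmas, by putting compatible filtrations on both complexes and invoking the comparison theorem for spectral sequences.

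Running the identical computation with $A^!$ in place of $A$, and using $(A^!)^!\cong A$ so that the Koszul data of $A^!$ is the Koszul data of $A$ with $x_i$ and $x_i^*$ interchanged, produces the same complex $\mathrm K^\bullet(A\otimes A^!)$ up to the evident transposition; this yields $\mathrm H^\bullet(\mathrm K^\bullet(A\otimes A^!))\cong \mathrm{HH}^\bullet(A^!)$ and hence the composite isomorphism of graded vector spaces. To upgrade it to an isomorphism of graded commutative algebras I would transport the Gerstenhaber cup product along these comparisons: on the Koszul model the product is the concatenation product induced by the comultiplication of $A^\ac$, equivalently the multiplication of $A^!$, and one checks it agrees with the bar cup product after transport through $q'$ and $\tilde q$ up to homotopy.

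I expect the genuine obstacle to be this multiplicativity together with the attendant sign and grading bookkeeping, rather than the homological comparison, which is standard. One must verify that the chain-level comparison intertwines the bar cup product with the natural product on $\mathrm K^\bullet(A\otimes A^!)$, and---most importantly---that the two identifications coming from $A$ and from $A^!$ induce the \emph{same} multiplication on the middle complex, so that the composite is an algebra homomorphism. This should be a careful but essentially routine diagram chase, requiring no new idea beyond the balanced structure of $\delta$ already isolated above.
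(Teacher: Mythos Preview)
The paper does not give its own proof of this theorem; it simply cites Buchweitz, Beilinson--Ginzburg--Soergel, and Keller. Your plan is essentially the standard argument those references carry out: compute $\mathrm{HH}^\bullet(A)$ via the bimodule Koszul resolution $\mathrm K'_\bullet(A)=A\otimes A^\ac_\bullet\otimes A$, dualize $A^\ac_m$ to $A^!_m$ to identify $\mathrm{Hom}_{A^e}(\mathrm K'_\bullet(A),A)$ with the middle complex, and then exploit the symmetry $A\leftrightarrow A^!$ coming from $(A^!)^!\cong A$. So your approach is correct and aligns with the literature the paper defers to.

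One caution: you describe the multiplicativity check as ``essentially routine,'' and that undersells it. The vector-space isomorphism is indeed formal once the Koszul resolution is in hand, but compatibility with the cup product is the substantive part of Keller's result. The naive product on $\mathrm K^\bullet(A\otimes A^!)$ induced from the multiplication of $A^!$ is not strictly commutative on the nose, and the comparison maps $q'$ and $\tilde q$ are only multiplicative up to coherent homotopy; organizing this properly is exactly why Keller works at the level of $B_\infty$-structures (or, in other treatments, through derived categories of DG categories) rather than by a direct chain-level diagram chase. Your identification of this as the obstacle is right, but it is a genuine theorem rather than bookkeeping.
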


\begin{proof}
See Buchweitz \cite{Buchweitz}, or Beilinson-Ginzburg-Soergel \cite{BGS}, or Keller \cite{Keller}.
\end{proof}

\subsection{Proof of the main theorem}

We are now ready to prove the main theorem of this paper.

\begin{proof}[Proof of Theorem \ref{mainthm}]
By Theorems \ref{PDforASregular} and \ref{PDforFrobenius}
we have the following commutative diagram
$$
\xymatrix{
\mathrm{HH}^\bullet(A; A)\ar[rr]&&\mathrm{HH}_{n-\bullet}(A; A_\sigma)&\\
&\mathrm{H}^\bullet(\mathrm K^\bullet(A\otimes A^!))
\ar[rr]\ar[lu]\ar[ld]&&\mathrm H_{n-\bullet}(\mathrm
K_\bullet(A\otimes \;_\sigma\!A^{\ac}))\ar[lu]\ar[ld]\\
\mathrm{HH}^\bullet(A^!; A^!)\ar[rr]
&&\mathrm{HH}_{n-\bullet}(A^{\ac};\;_\sigma\!A^{\ac}),&
}
$$
which gives the following commutative diagram
$$
\xymatrix{
\mathrm{HH}^\bullet(A; A)\ar[r]\ar[d]&\mathrm{HH}_{n-\bullet}(A; A_\sigma)\ar[d]\\
\mathrm{HH}^\bullet(A^!; A^!)\ar[r]&\mathrm{HH}_{n-\bullet}(A^{\ac};\;_\sigma\!A^{\ac}).
}
$$
Theorem \ref{thm:Koszuldualitygivesisoofcohomology} 
says that the left vertical map is an isomorphism of graded algebras, and
Theorem \ref{thm:Koszuldualitygivesisoofhomology} says that the right vertical map
respects the Connes operators.
Thus by Lemma \ref{calwithdualityinducesBV}
we see that two Batalin-Vilkovisky algebras are isomorphic.
\end{proof}

\begin{ack}I would like to
thank Xiaojun Chen and Farkhod Eshmatov for many helpful conversations and encouragements.
This work is partially supported by NSFC (No. 11521061 and 11671281).
\end{ack}

\end{document}